\author{Benjamin McKay, University College Cork \\ 
Alexey Pokrovskiy, London School of Economics}
\title{Locally homogeneous structures on Hopf surfaces}
\date{July 11, 2008}
\newtheorem{theorem}{Theorem}[section]
\newtheorem{lemma}[theorem]{Lemma}
\newtheorem{corollary}[theorem]{Corollary}
\newtheorem{proposition}[theorem]{Proposition}
\theoremstyle{remark}
{%
    \newtheorem{example}[theorem]{Example}
}%
\newtheorem{definition}[theorem]{Definition}
\newtheorem{remark}[theorem]{Remark}
\newcommand{\C}[1]{\ensuremath{\mathbb{C}^{#1}}}
\newcommand{\R}[1]{\ensuremath{\mathbb{R}^{#1}}}
\newcommand{\Z}[1]{\ensuremath{\mathbb{Z}^{#1}}}
\newcommand{\pd}[2]{\ensuremath{\frac{\partial #1}{\partial #2}}}
\newcommand{\OO}[1]{
  \ensuremath{
    \mathcal{O}
    \ifthenelse{\equal{#1}{0}}
      {}
      {\left({#1}\right)}
  }
}
\newcommand{\OOp}[2]{
  \ensuremath{
    \mathcal{O}
    \ifthenelse{\equal{#1}{0}}
      {}
      {\left({#1}\right)}
    \ifthenelse{\equal{#2}{1}}
      {}
      {^{\oplus{#2}}}
  }
}
\newcommand{\Proj}[1]{\ensuremath{\mathbb{P}^{#1}}}
\newcommand{\Sym}[2]{\ensuremath{\operatorname{Sym}^{#1}\left(#2\right)}}
\newcommand{\GL}[1]{\ensuremath{\operatorname{GL}\left(#1\right)}}
\newcommand{\SL}[1]{\ensuremath{\operatorname{SL}\left(#1\right)}}
\newcommand{\PSL}[1]{\ensuremath{\mathbb{P}\operatorname{GL}\left(#1\right)}}
\newlength{\setBracketHeight}
\newcommand{\SetSuchThat}[2]{
  \settoheight{\setBracketHeight}{\ensuremath{#2}}
  \ensuremath{\left\{\left.{#1\rule{0cm}{\setBracketHeight}}\,
      \right|\,{#2}\right\}}}
\newcounter{remarkCounter}
\newcommand{\Gn}{\ensuremath{
\left(\GL{2,\C{}}/\text{$n$-th roots of 1}\right) \rtimes \Sym{n}{\C{2}}^*}}
\newcommand{\Hn}{\ensuremath{\SetSuchThat{(g,p) \in G}{g=
\begin{pmatrix}
a & b \\
0 & d
\end{pmatrix},
\ p(1,0)=0
} }}
\newcommand{\dev}{\operatorname{dev}}
\newcommand{\hol}{\operatorname{hol}}
\newcommand{\yes}{\checkmark}
\newcommand{\no}{{\textsf{x}}}
\begin{document}
\maketitle
 \begin{abstract}
We study holomorphic locally homogeneous geometric structures modelled on line bundles over
the projective line. We classify these structures on primary Hopf surfaces. We write out the 
developing map and holonomy morphism of each of these structures explicitly on each primary Hopf surface.
\end{abstract}
\tableofcontents

\section{Introduction}

\subsection{The problem}

\begin{definition}
Suppose that $M$ is a manifold and that $G/H$ is a homogeneous space.
A \emph{$G/H$-structure} on $M$ is a maximal choice of
coordinates on $M$ valued in $G/H$, with transition maps given by action of elements of $G$.
A $G/H$-structure is also called a locally homogeneous structure modelled on $G/H$.
\end{definition}
There is a great deal known about $G/H$-structures on compact complex surfaces, as long as
$H$ is compact; see Wall \cite{Wall:1985,Wall:1986}.
We will suppose instead that
$G/H$ is a complex homogeneous space (i.e. that $G$ is a complex Lie group
and $H \subset G$ is a closed complex Lie subgroup), and that the $G/H$-structure
is holomorphic (i.e. the coordinates valued in $G/H$ are all holomorphic maps). A 
homogeneous space $G/H$ is \emph{primitive} if $G$ does not preserve
a foliation on $G/H$. A locally homogeneous structure is called
\emph{primitive} if its model is. The primitive
holomorphic locally homogeneous structures on
compact complex surfaces are classified; see Klingler \cite{Klingler:1998}. 
The imprimitive are a mystery, although the foliations are roughly
classified; see Brunella \cite{Brunella:1997}.
This paper will classify explicitly a particular family of imprimitive 
holomorphic locally homogeneous structures
(the $\OO{n}$-structures) on a particular family of compact complex
surfaces (the primary Hopf surfaces). The technique consists largely of elementary power
series calculations using Weierstrass polynomials in different
coordinate charts. Along the way we develop a systematic
machinery for computations on Hopf surfaces. 
Our aim in this paper is to develop the tools needed to eventually classify all holomorphic locally homogeneous
structures on all compact complex surfaces.
Although the arguments of this paper are disappointingly
complicated, the results are simple and surprising.
The authors believe that uncovering these results is an essential
step in the large and important programme of understanding
geometry of locally homogeneous structures 
on low dimensional manifolds.

\subsection{Organization of this paper}

Before we can study geometric structures
on Hopf surfaces, we will need to
review the known results on 
cohomology of line bundles on Hopf surfaces, 
and also classify
the flat $\Proj{1}$-bundles
on Hopf surfaces. We complete
this in sections~\ref{sec:Survey}
and \ref{sec:VectorBundles}.

In section~\ref{sec:GeometricStructures}, 
we define the concept of locally homogeneous 
geometric structure,
and we explain the simplifications to
the general theory that occur on
Hopf surfaces. 

The geometric structures in this paper
are modelled on the total
space $\OO{n}$ of the usual holomorphic
line bundle $\OO{n} \to \Proj{1}$.
We write $\OO{n}$ as $G/H$
for suitable groups $G$ and $H$. We explain how
$\OO{n}$-structures can be encoded as
ordinary differential equations in 
section~\ref{sec:GeometricStructures}.
The group $G$ acting on
$\OO{n}$ is complicated,
and we need to unravel its conjugacy
classes in some detail 
in section~\ref{section:TheModel}.

In section~\ref{section:Examples},
we write out explicit expressions in coordinates for each of the 
$\OO{n}$-structures on each Hopf surface. Unfortunately for our study,
certain Hopf surfaces (known as hyperresonant Hopf surfaces) have large and
complicated families of $\OO{n}$-structures,
depending on arbitrarily large families of parameters,
which are responsible for the length
and complexity of this paper. The hyperresonant structures are the
only surprise in this paper, having no apparent geometric description.
Section~\ref{section:Classification} proves that the various $\OO{n}$-structures 
that we have explicitly written out  are the only ones that any Hopf surface can bear.
Section~\ref{section:Inducement} presents some preliminary results
on locally homogeneous geometric structures inducing these $\OO{n}$-structures.

This material is based upon works supported by the Science Foundation Ireland under Grant No. MATF634.

\section{Definition and survey of Hopf surfaces}\label{sec:Survey}

\subsection{The Poincar\'e domain}\label{subsection:PoincareDomain}

Suppose that $F : \C{2} \to \C{2}$ is a biholomorphism fixing the origin. 
Suppose moreover that all eigenvalues $\lambda$ of $F'(0)$ satisfy 
$|\lambda|<1$; $F$ is said to lie in the \emph{Poincar\'e domain}. By the 
Poincar\'e--Dulac theorem (see \cite{Arnold:1988} p. 192), 
$F$ is conjugate by a biholomorphism of $\C{2}$ to a map of precisely one of the two forms
\begin{alignat*}{4}
 F(z) &= \left(\lambda_1 z_1, \lambda_2 z_2\right), 
\ & 0 < \left|\lambda_2\right| 
\le \left|\lambda_1\right| < 1 
& \text{ or } \\
 F(z) &= \left(\lambda z_1, \lambda^m z_2 + z_1^m\right), 
\ & 0 < \left|\lambda\right| < 1, m \ge 1. &
\end{alignat*}
In the first case, $F$ is called \emph{diagonal}. 
In the second case, $F$ is called \emph{exceptional}
and $m$ is an integer which we 
will call the \emph{degree} of $F$. 
We will not quite follow \cite{Mall:1998} 
in describing a diagonal map as:
\begin{center}
\begin{tabular}{ll}
homothetic & $\lambda_1=\lambda_2$ \\
hyperresonant & $\lambda_1^{m_1} = \lambda_2^{m_2}$ , 
some $m_1 \ge m_2 \ge 1$ integers \\
generic & otherwise.
\end{tabular}
\end{center}

In this article (breaking from tradition) we will 
consider homotheties to be special cases of 
hyperresonant maps, rather than requiring 
that a hyperresonant map have $m_1, m_2 \ge 2$. 
If a map $F$ is hyperresonant, then we will 
refer to the pair $m_1,m_2$ of integers for 
which $\lambda_1^{m_1} = \lambda_2^{m_2}$ and 
for which $m_1$ (and hence $m_2$) has the 
smallest possible positive value, as the 
\emph{hyperresonance} of $F$.

\subsection{Hopf surfaces}

A \emph{Hopf surface} is a compact complex 
surface covered by $\C{2} \setminus 0$. For 
example, for any $F$ in the Poincar\'e domain, 
let $S_F$ be the quotient 
$\left(\C{2} \setminus 0\right)/(z \sim F(z))$. 
Hopf surfaces of the form $S_F$ are called 
\emph{primary}. Every Hopf surface admits a 
finite covering by a primary Hopf surface 
(see \cite{Kodaira:1966} p. 696). From now on, 
when we refer to a Hopf surface, we will always 
assume that it is primary. Two Hopf surfaces are 
biholomorphic just when the associated 
biholomorphisms of $\C{2}$ are conjugate by a 
biholomorphism. Any term used to describe the 
map $F$ will also be used to describe $S_F$; 
for example a Hopf surface is called 
\emph{linear} or \emph{diagonal} or 
\emph{resonant}, etc. if the map $F$ is. 

An example: if $F(z)=\frac{1}{2}z$, then 
clearly $S_F$ is diffeomorphic to 
$S^3 \times S^1$. Every Hopf surface can be 
smoothly deformed into this one, through a 
family of Hopf surfaces, so all Hopf 
surfaces are diffeomorphic to $S^3 \times S^1$. 

\subsection{Biholomorphism groups of Hopf surfaces}
The biholomorphism groups of Hopf 
surfaces are well known \cite{Namba:1974,Wehler:1982}:
\begin{center}
\begin{tabular}{ll}
$F\left(z_1,z_2\right)$ & $\operatorname{Bihol} S_F$ \\ \midrule
homothety & invertible linear maps \\
nonhomothetic diagonal linear & invertible diagonal linear maps \\
$\left(\lambda z_1, \lambda^m z_2 + z_1^m \right)$ &
$\left(z_1,z_2\right) \mapsto \left(a z_1, a^m z_2 + b z_1^m \right)$
\end{tabular}
\end{center}
\par\noindent%
with $a \ne 0$ and $b$ arbitrary complex constants.

\subsection{Meromorphic functions}
\begin{definition}
A Weierstrass polynomial
$W\left(z_1,z_2\right)$ is a polynomial
in $z_1$, with
coefficients holomorphic functions of $z_2$,
so that there is some point of 
the complex line $z_1=0$ at which 
$W\left(z_1,z_2\right) \ne 0$.
\end{definition}

\begin{lemma}\label{lemma:MeromorphicWeierstrass}
Suppose that $U \subset \C{2}$ is
an open neighborhood of the origin, 
and that $f$ is a meromorphic function on
$U \setminus 0$. 
Then there is an open neighborhood $U' \subset \C{2}$
of the origin, with $U' \subset U$, an integer $k$,
relatively prime Weierstrass polynomials
$W_1$ and $W_2$ on $U'$,  
and a function $h$ holomorphic on
$U'$, so that 
\[
f = h \, z_1^k \, \frac{W_1}{W_2}
\]
in $U' \setminus 0$.
The neighborhood $U'$ is not uniquely determined, but once $U'$ is 
chosen then the rest is uniquely determined, i.e. any two such representations
must agree on $U'$.
\end{lemma}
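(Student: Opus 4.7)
The plan is to combine the classical codimension-two extension theorem for meromorphic functions with the Weierstrass preparation theorem, exploiting the unique factorisation structure of the local ring $\mathcal{O}_{\C{2},0}$ (a classical result of R\"uckert). First, since $\{0\}$ has codimension $2$ in $U\subset\C{2}$, Levi's extension theorem produces a unique meromorphic extension of $f$ to a neighbourhood of $0$; after shrinking $U$ we may assume $f$ is meromorphic at $0$. The germ of $f$ at $0$ then admits a representation $f=g_1/g_2$ with $g_1,g_2$ coprime holomorphic germs, and we may choose a polydisk $U'\subset U$ centred at $0$ on which the $g_i$ are realised as honest holomorphic functions.

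Next, I would reduce to the Weierstrass preparation setup by stripping off powers of the coordinates. Using that $z_1$ is prime in $\mathcal{O}_{\C{2},0}$, write $g_i=z_1^{k_i}\tilde g_i$ with $z_1\nmid\tilde g_i$, equivalently $\tilde g_i(0,z_2)\not\equiv 0$, and put $k:=k_1-k_2$. Similarly, write $\tilde g_i=z_2^{m_i}h_i$ with $z_2\nmid h_i$, so $h_i(z_1,0)\not\equiv 0$ and $h_i$ is $z_1$-regular at the origin. Classical Weierstrass preparation (with distinguished variable $z_1$) then produces $h_i=u_iP_i$ with $u_i$ a unit of $\mathcal{O}_{\C{2},0}$ and $P_i$ a monic Weierstrass polynomial in $z_1$. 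Since $z_1$ divides neither $\tilde g_i$, nor $z_2^{m_i}$, nor $u_i$, it also fails to divide $P_i$, so the $z_1^0$ coefficient $a_0^{(i)}(z_2)$ of $P_i$ is not identically zero. Setting $W_i:=z_2^{m_i}P_i$ yields a polynomial in $z_1$ with holomorphic coefficients in $z_2$ and $W_i(0,z_2)=z_2^{m_i}a_0^{(i)}(z_2)\not\equiv 0$, so $W_i$ is a Weierstrass polynomial in the sense of the lemma. Reassembling, $f=(u_1/u_2)\,z_1^k\,W_1/W_2$, and setting $h:=u_1/u_2$ (holomorphic and nowhere vanishing on a sufficiently small $U'$) yields the desired representation; coprimality of $W_1,W_2$ is inherited from that of $g_1,g_2$ because only unit factors were stripped off.

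For uniqueness, cross-multiplying two representations yields an identity $h\,z_1^k\,W_1W_2' = h'\,z_1^{k'}\,W_1'W_2$ of holomorphic functions on $U'$. Since $z_1\nmid W_j, W_j'$, comparing $z_1$-adic valuations in $\mathcal{O}_{\C{2},0}$ forces $k=k'$; unique factorisation combined with the coprimality hypotheses then forces $W_1$ and $W_1'$ (respectively $W_2$ and $W_2'$) to be associate via a common unit, which is absorbed into a matching adjustment of $h$. The main obstacle I anticipate is the careful bookkeeping through the three factorisations---extracting $z_1^{k_i}$, extracting $z_2^{m_i}$, and applying Weierstrass preparation in $z_1$---while maintaining both $W_i(0,z_2)\not\equiv 0$ and the coprimality of $W_1,W_2$ at each stage, as well as interpreting uniqueness modulo the inevitable rescaling of $W_1,W_2$ by a common polynomial-in-$z_1$ unit.
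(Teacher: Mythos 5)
Your proof is correct and follows essentially the same route as the paper's: Levi's extension theorem, writing $f$ as a ratio of coprime holomorphic germs, then Weierstrass preparation and unique factorisation in $\mathcal{O}_{\C{2},0}$, only spelled out in much greater detail (in particular the bookkeeping of the $z_1^{k_i}$ and $z_2^{m_i}$ factors needed to match the paper's non-monic notion of Weierstrass polynomial). Your observation that uniqueness holds only modulo a common unit absorbed into $h$ is a fair refinement that the paper's one-line uniqueness claim glosses over.
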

\begin{proof}
By Levi's theorem (see \cite{Taylor:1939}) any meromorphic function on 
$U \setminus 0$ extends uniquely to a meromorphic function $U$. We can then find
a possibly smaller neighborhood $U'$ of $0$ on which $f=h_1/h_2$ is a ratio
of holomorphic functions. Write out Weierstrass polynomial
factorizations of $h_1$ and $h_2$; these exist by the Weierstrass preparation
theorem \cite{Hormander:1990} p. 157.
The functions involved are all uniquely determined by uniqueness of
Weierstrass polynomial factorization of holomorphic functions.
\end{proof}

The meromorphic functions on 
Hopf surfaces are well known \cite{Mall:1998}:
\begin{lemma}\label{lemma:MeromorphicFunctionsOnHopfSurfaces}
\begin{center}
\begin{tabular}{lll}
$F$ & 
$\C{}\left(S_F\right)$ & 
\\ \midrule
hyperresonant &  
$\C{}\left(\frac{z_1^{m_1}}{z_2^{m_2}}\right)$ & 
if $\lambda_1^{m_1} = \lambda_2^{m_2}$ \\
generic & \C{} \\
exceptional & \C{} 
\end{tabular}
\end{center}
\end{lemma}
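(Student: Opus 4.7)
The plan is to lift $f \in \C{}(S_F)$ via the projection $\C{2}\setminus 0 \to S_F$ to an $F$-invariant meromorphic function $\tilde f$ on $\C{2}\setminus 0$. By Levi's theorem (recalled in the proof of Lemma~\ref{lemma:MeromorphicWeierstrass}), $\tilde f$ extends uniquely to a meromorphic function on $\C{2}$, still $F$-invariant by continuity. Apply Lemma~\ref{lemma:MeromorphicWeierstrass} to write $\tilde f = h\, z_1^k\, W_1/W_2$ on a neighbourhood of $0$, with $h$ a unit and $W_1, W_2$ coprime Weierstrass polynomials in $z_1$. Uniqueness of this representation, applied to $F^*\tilde f = \tilde f$, forces each $W_i$ to satisfy $F^*W_i = u_i W_i$ for units $u_i$, together with a matching multiplicative relation for $h$.

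In the diagonal case, each monomial $z_1^p z_2^q$ is an $F^*$-eigenvector of eigenvalue $\lambda_1^p \lambda_2^q$, so the invariance decouples coefficient by coefficient. In the generic case the only solutions of $\lambda_1^p \lambda_2^q = c$ in $\Z{\geq 0}^2$ pin down a single monomial per value $c$, forcing $W_1 = W_2 = 1$, $h$ a scalar, and $k=0$: hence $f$ is constant. In the hyperresonant case $\lambda_1^{m_1}=\lambda_2^{m_2}$, monomials sharing a common eigenvalue form the one-parameter family $z_1^{p+jm_1} z_2^{q-jm_2}$, and repacking the pieces of $\tilde f$ expresses it as a rational function of $s := z_1^{m_1}/z_2^{m_2}$. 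Conversely $F^*s = s$, so every rational function of $s$ is invariant, yielding $\C{}(S_F) = \C{}(s)$.

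For the exceptional case $F(z) = (\lambda z_1, \lambda^m z_2 + z_1^m)$ I would bypass the explicit Weierstrass bookkeeping by arguing geometrically: I would show that $\{z_1 = 0\}$ is the only $F$-invariant irreducible analytic curve in $\C{2}$. The key structure is the weighted grading $w(z_1^i z_2^j) = i + mj$, which $F^*$ preserves: on each weight summand $V_w$ the operator $F^*$ acts as $\lambda^w N_w$, with $N_w$ triangular unipotent and $\langle z_1^w \rangle$ as its only fixed subspace. If $g$ is a monic Weierstrass polynomial in $z_1$ of degree $d$ satisfying $F^*g = ug$, the lowest-weight component must equal $z_1^d$ and $u(0) = \lambda^d$; then $\lambda^w N_w - \lambda^d$ is invertible for $w > d$, and an induction on weight, enforcing at each step that $g_w$ contain no $z_1^i$ term with $i > d$, forces the higher units $u_w$ and the higher components $g_w$ to vanish, contradicting $g(0,z_2) \not\equiv 0$. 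Once this is established, a non-constant $\tilde f$ would produce a continuous family of pairwise disjoint level sets $\{\tilde f = c\}$, each an $F$-invariant analytic hypersurface. Each of its irreducible components contains $0$ (since $F^n(z) \to 0$ for every $z\in\C{2}$, by direct computation), is $F^N$-invariant for some $N$ (as $F$ permutes the finitely many local components at $0$), and $F^N$ is again exceptional after a linear rescaling of $z_2$; hence each component equals $\{z_1 = 0\}$, forcing every generic level set to be $\{z_1 = 0\}$, which is impossible for a non-constant $\tilde f$.

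The main obstacle is the exceptional case, since $F^*$ is not diagonalisable on monomials and a direct expansion involves careful bookkeeping through the Jordan blocks of the operators $N_w$. Reducing the question to the classification of $F$-invariant irreducible curves through the origin keeps the combinatorics finite and tractable.
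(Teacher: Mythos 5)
Your treatment of the diagonal case is essentially the paper's own proof: lift to an $F$-invariant meromorphic function, extend by Levi's theorem, apply the Weierstrass factorization of Lemma~\ref{lemma:MeromorphicWeierstrass}, use uniqueness to force $W_1$, $W_2$ and $h$ to transform by units, and read off the answer monomial by monomial, the hyperresonance producing exactly the one-parameter families of monomials that assemble into rational functions of $z_1^{m_1}/z_2^{m_2}$. Where you genuinely depart from the paper is the exceptional case: the paper defers it to Kodaira and Barth--Peters--Van de Ven (see the remark preceding the lemma), whereas you give a self-contained argument via the weighted grading $w\left(z_1^iz_2^j\right)=i+mj$, which $F^*$ preserves and on whose graded pieces it acts as $\lambda^w$ times a unipotent operator with fixed space spanned by $z_1^w$. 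This is a real gain, since the paper's ``swap $z_1$ and $z_2$'' step has no meaning for exceptional $F$. Two details of your sketch need repair, though neither is fatal. First, the lowest-weight component of an eigen-polynomial $g$ with $F^*g=ug$ is $c\,z_1^{w_0}$ for \emph{some} $w_0\le d$ with $u(0)=\lambda^{w_0}$, not necessarily $z_1^d$: for instance $g=1+z_1$ satisfies $F^*g=ug$ with $u(0)=1\ne\lambda^d$. Second, the induction on weight does not make the higher components vanish; rather, since the nilpotent part raises the $z_1$-adic filtration and $\lambda^wN_w-\lambda^{w_0}$ is invertible on each filtration step for $w\ne w_0$, it shows every weight component is divisible by $z_1^{w_0}$, whence $z_1^{w_0}$ divides $g$ and $g\left(0,z_2\right)\equiv 0$ unless $w_0=0$, i.e.\ unless $g$ is a unit. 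That corrected statement already shows directly that $W_1$ and $W_2$ are units, so $\tilde f=c\,z_1^k$ and invariance forces $k=0$; your further detour through level sets and invariant irreducible curves (where, incidentally, the level sets of a meromorphic function are not pairwise disjoint at points of indeterminacy) is then unnecessary, though not incorrect once ``disjoint'' is weakened to ``sharing no component.''
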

\begin{remark}
An simpler but incorrect proof of this lemma has been given in the 
literature; \cite{Kodaira:1966} p. 697 and 
\cite{Barth/Peters/VanDenVen:1984} p. 226. For $F$ 
exceptional or generic, the arguments 
work perfectly well, and yield the indicated 
meromorphic sections. For $F$ hyperresonant,
it turns out that we will need a little more work,
as will be clarified below.
These authors each claim that if $f$ is a meromorphic
function on a Hopf surface, then $z_1^N f$ is holomorphic 
on $\C{2}$, for large enough $N$, which is not true for
\[
 f=\frac{z_1+z_2}{z_1-z_2}
\]
even though this function $f$ is meromorphic 
on the Hopf surface $S_{1/2}$.
\end{remark}
\begin{proof}
Suppose that $f$ is a meromorphic function 
on a Hopf surface $S_F$. Treat $f$ as an $F$-invariant
meromorphic function on $\C{2}$. By lemma~\vref{lemma:MeromorphicWeierstrass}, near the origin
$f=\frac{h \, W_1}{W_2}$, with $W_1$ and $W_2$ 
uniquely determined Weierstrass polynomials 
in $z_1$, and $h$ nowhere vanishing and 
holomorphic. Under action of $F$, these 
Weierstrass polynomials get transformed into 
new Weierstrass polynomials in $z_1$, up to 
scaling. By uniqueness of Weierstrass 
polynomials for the numerator and denominator 
of $f$, $F$ must just scale each Weierstrass 
polynomial. But then $h$ must also only get 
rescaled. Hence $h, W_1$ and $W_2$ are 
themselves sections of various line bundles 
on the Hopf surface. The value of 
$h\left(z_1,z_2\right)$ at the origin is 
the same nonzero value as that of 
$h\left(\lambda_1 z_1, \lambda_2 z_2\right)$
at the origin.
So $h$ must scale by $1$, i.e. $h$ must in 
fact be a holomorphic function on the Hopf 
surface, and so $h$ is a constant. So 
$f=cW_1/W_2$ is rational in $z_1$. 
Swapping the roles of $z_1$ and $z_2$
in this argument, $f$ must also be rational
in $z_2$, so a rational function.

Expanding out $W_1$ and $W_2$ in Taylor series, the terms 
$z_1^{k_1} z_2^{k_2}$ in their Taylor series 
must all scale in the same way: by a factor 
of $\lambda_1^{k_1} \lambda_2^{k_2}$. If 
there are no hyperresonances, then there can 
only be one such term, and it must be the same 
term in $W_1$ and $W_2$, so $f$ is constant. If 
there is a hyperresonance, we can multiply 
numerator and denominator Weierstrass 
polynomials each by a factor of $z_1^{k_1} z_2^{k_2}$ 
for some integers $k_1$ and $k_2$ to arrange 
that they are both rational functions of $u$.
\end{proof}

\section{Bundles on Hopf surfaces}\label{sec:VectorBundles}

\subsection{Meromorphic sections of line bundles}
If $g$ is any invertible matrix, say $N \times N$, then we 
can construct a vector bundle $\left(\C{2} \setminus 0\right) \times_{(F,g)} \C{N}$ 
over each Hopf surface $S_F$ by the equivalence $(z,v) \sim (F(z),gv)$. Conjugate 
linear maps yield isomorphic vector bundles, and splitting $g$ into Jordan blocks yields 
a sum of vector bundles, so let's assume that $g$ is a single Jordan block.
The invariant subspaces of $g$ determine a flag of invariant vector subbundles on 
the Hopf surface. The meromorphic sections are the solutions of
\[
 f(F(z))=g \, f(z).
\]
From the exponential sheaf sequence, every line bundle on every Hopf surface has 
the form $\left(\C{2} \setminus 0\right) \times_{(F,a)} \C{}$ 
for a unique nonzero complex number $a$ 
(see \cite{Barth/Peters/VanDenVen:1984} p. 226 or \cite{Mall:1991}).

\begin{proposition}[Mall \cite{Mall:1991}]\label{proposition:Mall}
Take $F : \C{2} \to \C{2}$ in the 
Poincar\'e domain, and $a \ne 0$ 
a complex number. The meromorphic 
sections of the line bundle 
$\left( \C{2} \setminus 0 \right) 
\times_{\left(F,a\right)} \C{}$ are, 
up to isomorphism (with $c$ an arbitrary complex number,  
$u=z_1^{m_1}/z_2^{m_2}$, and $P(u)$ and $Q(u)$ arbitrary polynomials):
 \[
\begin{array}{lll}\label{table:MeromorphicSections}
 F\left(z_1,z_2\right) & a \in \C{\times} & \emph{\textrm{meromorphic sections}}
 \\  \midrule 
\addlinespace[2pt]
\left(\lambda_1 z_1, \lambda_2 z_2\right), 
\lambda_1^{m_1}=\lambda_2^{m_2} & 
 \lambda_1^{k_1} \lambda_2^{k_2}
&
z_1^{k_1} z_2^{k_2} \frac{P(u)}{Q(u)}
\\
\left(\lambda_1 z_1, \lambda_2 z_2\right), \text{\normalfont{generic}} & 
 \lambda_1^{k_1} \lambda_2^{k_2} 
&
c \, z_1^{k_1} z_2^{k_2}
\\
\left(\lambda_1 z_1, \lambda_2 z_2\right) & 
a \ne \lambda_1^{k_1} \lambda_2^{k_2}
&
0
\\
\left(\lambda z_1, \lambda^m z_2 + z_1^m \right) &
 \lambda^k
&
c \, z_1^k
\\
\left(\lambda z_1, \lambda^m z_2 + z_1^m \right) &
a \ne \lambda^k
&
0
\end{array}
\]
\end{proposition}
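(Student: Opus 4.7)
The plan is to reduce to Lemma~\ref{lemma:MeromorphicFunctionsOnHopfSurfaces} by dividing any meromorphic section by an explicit ``primary'' one, and then to rule out the remaining cases by a Weierstrass analysis. Throughout, identify a meromorphic section of the line bundle $\left(\C{2}\setminus 0\right)\times_{(F,a)}\C{}$ with a meromorphic function $f$ on $\C{2}\setminus 0$ satisfying $f\circ F=a\,f$.

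When $a=\lambda_1^{k_1}\lambda_2^{k_2}$ in the diagonal case (respectively $a=\lambda^{k}$ in the exceptional case), the monomial $s_0=z_1^{k_1}z_2^{k_2}$ (respectively $z_1^{k}$) is itself a meromorphic section, so $f/s_0$ is $F$-invariant and meromorphic on $\C{2}\setminus 0$. By Lemma~\ref{lemma:MeromorphicFunctionsOnHopfSurfaces}, $f/s_0$ is a constant in the generic diagonal and exceptional cases, and a rational function of $u=z_1^{m_1}/z_2^{m_2}$ in the hyperresonant case. Multiplying back by $s_0$ recovers the three nontrivial rows of the table.

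For the two nonexistence rows, I would apply Lemma~\ref{lemma:MeromorphicWeierstrass} near $0$ to write $f=h\,z_1^{k}\,W_1/W_2$ with $h$ holomorphic and nowhere vanishing and $W_1,W_2$ relatively prime Weierstrass polynomials in $z_1$. Uniqueness, applied to both sides of $f\circ F=a\,f$, forces each factor to be $F$-semi-invariant with constant scaling: $h\circ F=c_h\,h$, $W_i\circ F=c_i\,W_i$, with $a=c_h\,\lambda_1^{k}\,c_1/c_2$. Evaluating at $0$ gives $c_h=1$, and iterating $F^{-1}$ extends $h$ to an $F$-invariant holomorphic function on $\C{2}\setminus 0$ and thus to a constant on the compact $S_F$. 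The problem reduces to classifying the semi-invariant Weierstrass polynomials. In the diagonal case the relation $W\circ F=cW$ becomes $a_{jk}(\lambda_1^{j}\lambda_2^{k}-c)=0$, which together with the Weierstrass condition $W(0,z_2)\not\equiv 0$ yields $W=\alpha z_2^{k_0}$ (generic) or $W=z_2^{k_0}P(u)$ (hyperresonant), both with $c=\lambda_2^{k_0}$; combining with the factor $z_1^{k}$ gives $a=\lambda_1^{K_1}\lambda_2^{K_2}$, contradicting the hypothesis of this row.

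The main obstacle is the exceptional case $F(z)=(\lambda z_1,\lambda^{m}z_2+z_1^{m})$, where the non-linear term couples Weierstrass coefficients across different $z_1$-degrees. Expanding $a_j(\lambda^m z_2+z_1^m)$ as a Taylor series in $z_1^{m}$, the relation $W\circ F=cW$ becomes, at the $z_1^{n}$-coefficient,
\[
c\,a_n(z_2)\;=\;\sum_{\substack{j+\ell m=n\\ j,\ell\ge 0}}\lambda^{j}\,\frac{a_j^{(\ell)}(\lambda^m z_2)}{\ell!}.
\]
Reading off $n=0$ gives $a_0=\alpha z_2^{k_0}$ and $c=\lambda^{m k_0}$; reading off $n=m$ introduces $a_0'(\lambda^m z_2)$, and matching the $z_2^{k_0-1}$ coefficient forces $k_0=0$, so $c=1$ and $a_0$ is constant. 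An induction on $n$, using the absence of any resonance $\lambda^{N}=1$ for $N\ge 1$ in the Poincar\'e domain, then kills every $a_n$ with $n\ge 1$. Hence $W$ is a constant, $W_1/W_2$ is a constant, and $f=C\,z_1^{k}$ with $a=\lambda^{k}$, again contradicting the hypothesis and completing the classification.
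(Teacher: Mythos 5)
Your proposal is correct. Its first half --- dividing an arbitrary section by the explicit monomial section $z_1^{k_1}z_2^{k_2}$ (or $z_1^k$) to land in the function field of $S_F$ and then invoking Lemma~\ref{lemma:MeromorphicFunctionsOnHopfSurfaces} --- is exactly the reduction the paper performs, just run in the opposite direction (the paper twists $f$ by monomials until $a=1$ rather than dividing by a known section). Where you genuinely diverge is in the two nonexistence rows. The paper pins down the admissible values of $a$ by restricting the suitably twisted section to an invariant coordinate axis and comparing Laurent coefficients, which disposes of nonexistence in a few lines but is only really written out for diagonal $F$. You instead redo the Weierstrass-factorization analysis of Lemma~\ref{lemma:MeromorphicFunctionsOnHopfSurfaces} at the level of sections: semi-invariance of $h$, $W_1$, $W_2$ via Lemma~\ref{lemma:MeromorphicWeierstrass}, the relation $a_{jk}\left(\lambda_1^{j}\lambda_2^{k}-c\right)=0$ in the diagonal case, and an induction on the $z_1$-degree of the coefficients $a_n\left(z_2\right)$ in the exceptional case. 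This costs more work but buys a self-contained treatment of the exceptional surfaces, where the nonlinear term $z_1^m$ couples the coefficients, only the axis $z_1=0$ is $F$-invariant, and the paper's written argument is essentially silent; your induction correctly exploits $\left|\lambda\right|<1$ to exclude resonances $\lambda^N=1$ and forces $W$ constant, hence $a=\lambda^k$. The one loose point --- asserting that the factor relating $W_i\circ F$ to $W_i$ is a constant rather than merely a nowhere-vanishing unit --- is precisely the same looseness present in the paper's own proof of Lemma~\ref{lemma:MeromorphicFunctionsOnHopfSurfaces}, so it is not a gap relative to the paper's standard of rigor.
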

\begin{remark}\label{remark:lineOfResonances}
If $F$ is hyperresonant 
with hyperresonance $\left(m_1,m_2\right)$, and 
$a=\lambda_1^{k_1} \lambda_2^{k_2}$, then 
draw a dot at $\left(k_1,k_2\right)$ 
and at every point given by shifting 
$\left(k_1,k_2\right)$ over by 
integer multiples of $\left(m_1,-m_2\right)$:
\begin{tikzpicture}
 \draw[very thin, gray,step=0.1] (-.05,-.05) grid (.65,.45);
\draw (-.1,.36666) -- (.6,-.1);
\draw (-.05,0) -- (.65,0);
\draw (0,-.05) -- (0,.45);
\fill (0,.3) circle (1pt); 
\fill (.3,.1) circle (1pt);
\end{tikzpicture} 
 In particular, the line through these points has 
negative slope. The Laurent series terms in each 
meromorphic section $f$ of 
$\left(\C{2} \setminus 0\right) \times_{(F,a)} \C{}$ 
have exponents $\left(j_1, j_2\right)$ 
lying on these points, so each dot represents a 
meromorphic section, up to scaling.
The holomorphic sections of the line bundle arise from the points 
in the nonnegative quadrant. The tensor product 
of line bundles is just addition of the points 
lying on the associated lines. 
\end{remark}
\begin{proof}
Take a meromorphic section, say $f$. 
Replacing $f$ with $z_1^{k_1} z_2^{k_2} f$ 
for various integers $k_1$ and $k_2$ gives 
a meromorphic section of the line bundle 
with $a$ replaced by 
$\lambda_1^{k_1} \lambda_2^{k_2} a$. Moreover, 
$f \mapsto z_1^{k_1} z_2^{k_2} f$ is an 
isomorphism of meromorphic sections of 
these line bundles. So we can arrange that 
$f$ doesn't vanish or have poles at generic 
points of both axes. But then on each axis, 
$f$ transforms like 
$f\left(\lambda_1 z_1, 0\right)
= a f\left(z_1,0\right)$. 
Clearly $f$ is meromorphic 
on both coordinate axes. Taking a Laurent 
expansion in some annulus around the origin, 
we find that $f\left(z_1,0\right)
=a_1 z_1^{\ell_1}$ for an appropriate 
choice of integer $\ell_1$, and 
$a=\lambda_1^{\ell_1}$ for some integer 
$\ell_1$. So once again replacing $f$ by 
some $z_1^{-\ell_1} f$, we can arrange 
that $f$ is a nonzero constant on the 
$z_1$ axis, and that $a=1$: $f$ is a 
meromorphic function on the Hopf surface.
Applying our classification of meromorphic
functions from lemma~\vref{lemma:MeromorphicFunctionsOnHopfSurfaces},
finally every meromorphic section 
of every line bundle 
$\left(\C{2} \setminus 0\right) \times_{\left(F,a\right)} \C{}$ 
has the form
\[
f(z)
=
z_1^{k_1} 
z_2^{k_2} 
\frac{P(u)}{Q(u)} 
\text{ where } 
u
=
z_1^{m_1}/z_2^{m_2}.
\]
By cancelling common factors, 
we can arrange that $P$ and $Q$ have no common 
zeros and that neither $P(u)$ 
nor $Q(u)$ have zeros at $u=0$.
\end{proof}


\subsection{Flat bundles of projective lines}
We can similarly consider a bundle of 
projective spaces 
$\left(\C{2} \setminus 0\right) \times_{(F,g)} \Proj{N}$, 
with $g \in \PSL{N+1,\C{}}$.  
Such bundles are
precisely the flat bundles with
projective space fibers over Hopf surfaces.
We can assume 
that $g$ is in Jordan normal form.
We will refer to any meromorphic map
$f$ on $\C{2} \setminus 0$ valued in $\Proj{N}$ 
satisfying
$f(F(z))=g \, f(z)$ as a \emph{meromorphic
section} of the bundle. For example,
if $g$ fixes infinity, we will
also equally well allow $f$ to be everywhere
infinite rather than being meromorphic,
and then also call such a map $f$
a \emph{meromorphic section}.
Obviously the constant maps 
$f$ valued in the locus of fixed 
points of $g$ will provide  
meromorphic sections. 
We will only need to consider flat bundles of 
projective lines. We will write elements of 
$\PSL{2,\C{}}$ as matrices in square brackets.
\begin{proposition}\label{proposition:MeromorphicSections}
Suppose that $F : \C{2} \to \C{2}$ 
belongs to the Poincar\'e domain and 
$g \in \PSL{2,\C{}}$. The meromorphic sections of 
the flat projective line bundle 
$\left(\C{2} \setminus 0\right) 
\times_{(F,g)} \Proj{1}$ 
are given in table~\vref{table:ProjectiveLineBundles}
(after a suitable isomorphism to put $F$ 
and $g$ into one of the indicated forms).
Every meromorphic section is a holomorphic section.
\end{proposition}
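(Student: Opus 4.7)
The plan is to reduce everything to the cases already covered by Lemma~\ref{lemma:MeromorphicFunctionsOnHopfSurfaces} (meromorphic functions on $S_F$) and Proposition~\ref{proposition:Mall} (meromorphic sections of line bundles). Conjugating $g$ by some $h \in \PSL{2,\C{}}$ induces an isomorphism of the associated flat $\Proj{1}$-bundles, carrying any section $f$ to $h^{-1} \circ f$, so we may assume $g$ is in one of the three Jordan normal forms: (i) $g = [I]$, (ii) $g = [\operatorname{diag}(\alpha,\alpha^{-1})]$ with $\alpha^2 \ne 1$, or (iii) $g$ parabolic, conjugate to $\left[\begin{smallmatrix} 1 & 1 \\ 0 & 1 \end{smallmatrix}\right]$.

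In case (i), the equation $f \circ F = f$ forces $f$ to descend to a meromorphic function $S_F \to \Proj{1}$, and Lemma~\ref{lemma:MeromorphicFunctionsOnHopfSurfaces} enumerates the possibilities. In case (ii), pick an affine coordinate $w$ on $\Proj{1}$ so that $g$ acts as $w \mapsto \alpha^2 w$; then either $f$ is one of the two constant sections at the fixed points $w=0$ and $w=\infty$, or $f$ is a nonzero meromorphic $\C{}$-valued function with $f \circ F = \alpha^2 f$. The latter condition is exactly the definition of a meromorphic section of the line bundle $\left(\C{2}\setminus 0\right)\times_{(F,\alpha^2)} \C{}$, so Proposition~\ref{proposition:Mall} finishes this case.

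Case (iii) is the main obstacle. Here $g$ acts as $w \mapsto w+1$, with unique fixed point $\infty$, so the constant section $f \equiv \infty$ is a solution; I claim there are no others. If $f$ were a nonconstant meromorphic $\C{}$-valued solution of $f \circ F = f + 1$, then iteration would give $f(F^n(z)) = f(z) + n$, which grows linearly in $n$. On the other hand, Levi's theorem extends $f$ meromorphically across the origin, and Lemma~\ref{lemma:MeromorphicWeierstrass} gives a local factorization $f = h\,z_1^k\,W_1/W_2$ near $0$. Since $F$ is in the Poincar\'e domain, $F^n(z) \to 0$ exponentially quickly; combining the Poincar\'e--Dulac normal form for $F$ with the leading-order behavior of $h$, $z_1^k$, and the Weierstrass factors shows that along a generic orbit $f(F^n(z))$ must be either bounded or grow/decay exponentially in $n$. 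Linear growth is excluded, contradicting the functional equation. This is the technically delicate step, since one must verify that a generic choice of $z$ avoids the pole and indeterminacy loci of $f$ throughout the orbit $\{F^n(z)\}$.

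To upgrade meromorphic sections to holomorphic ones, I clear denominators in the explicit Mall form $z_1^{k_1} z_2^{k_2} P(u)/Q(u)$ (with $u = z_1^{m_1}/z_2^{m_2}$) to present each $f$ as a pair $[f_1 : f_2]$ of holomorphic functions on $\C{2} \setminus 0$. Exploiting the arrangement from the proof of Proposition~\ref{proposition:Mall} that $P$ and $Q$ are coprime with $P(0), Q(0) \ne 0$, together with the observation that the map $[z_1^{m_1} : z_2^{m_2}] : \C{2}\setminus 0 \to \Proj{1}$ is itself holomorphic, a direct check on each coordinate axis and on the level curves $\{Q(u)=0\}$ shows that $f_1$ and $f_2$ have no common zero on $\C{2}\setminus 0$. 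Hence $f$ extends to a holomorphic map to $\Proj{1}$, completing the proposition.
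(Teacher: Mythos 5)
Your cases (i) and (ii) are correct and follow the same reduction the paper uses: for diagonalizable $g$ the section equation $f\circ F=\frac{a_1}{a_2}f$ is exactly the defining equation of a meromorphic section of a line bundle, so Proposition~\ref{proposition:Mall} (together with Lemma~\ref{lemma:MeromorphicFunctionsOnHopfSurfaces} when $g=I$) finishes those cases, and your holomorphicity upgrade at the end is essentially the paper's corollary.

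Case (iii), however, contains a genuine error: the claim that $f\equiv\infty$ is the only section of a parabolic bundle is false, and it contradicts the last row of Table~\ref{table:ProjectiveLineBundles}. Over an exceptional Hopf surface with $F\left(z_1,z_2\right)=\left(\lambda z_1,\lambda^m z_2+z_1^m\right)$, the function $f=\frac{z_2}{a}\left(\frac{\lambda}{z_1}\right)^m+c$ satisfies $f\circ F=f+\frac{1}{a}$ and is a nonconstant meromorphic (indeed holomorphic) section. The step that fails is your asserted dichotomy that $f\left(F^n(z)\right)$ is bounded or exponential along orbits: for the exceptional normal form one computes
\[
\frac{z_2}{z_1^m}\circ F^n=\frac{z_2}{z_1^m}+\frac{n}{\lambda^m},
\]
which grows \emph{linearly} in $n$ --- exactly the growth forced by the functional equation --- so no contradiction arises. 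The unipotent (resonant) part of $F$ produces polynomial growth in ratios of exponentially decaying quantities, which a leading-order analysis of $h\,z_1^k\,W_1/W_2$ does not capture. You must split case (iii) according to the type of $F$. For diagonal $F$ the conclusion ``only $f\equiv\infty$'' is correct, but the paper proves it (Lemma~\ref{lemma:Pooh}) by an exact computation, not a growth estimate: restrict to the axes to see $f$ must be infinite there, write $f=h/\left(z_1^{\ell_1}z_2^{\ell_2}W\right)$ with $W$ a Weierstrass polynomial, show $W$ is a holomorphic section of a line bundle and hence of the form given by Proposition~\ref{proposition:Mall}, and derive the contradiction $b_0=0$ from a single Taylor coefficient. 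For exceptional $F$ the paper instead \emph{finds} the nonconstant sections by Kodaira's trick: after reducing to $f=h/z_1^{\ell}$, differentiate the functional equation with respect to $z_2$ so that $\partial h/\partial z_2$ becomes a section of a line bundle computable from Proposition~\ref{proposition:Mall}, which pins down $f$ up to the additive constant.
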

\begin{table}
\begin{center}
\[
\begin{array}{lll}
 F\left(z_1,z_2\right) & 
g \in \PSL{2,\C{}} & 
\text{meromorphic sections}
 \\  \midrule 
\addlinespace[2pt]
\left(\lambda_1 z_1, \lambda_2 z_2\right), \lambda_1^{m_1}=\lambda_2^{m_2} & 
\begin{bmatrix}
 \lambda_1^{k_1} \lambda_2^{k_2} & 0 \\
0 & 1 
\end{bmatrix}
&
z_1^{k_1} z_2^{k_2} \frac{P(u)}{Q(u)}, \infty
\\
\left(\lambda_1 z_1, \lambda_2 z_2\right), \text{\normalfont{generic}} & 
\begin{bmatrix}
 \lambda_1^{k_1} \lambda_2^{k_2} & 0 \\
0 & 1 
\end{bmatrix}
&
c \, z_1^{k_1} z_2^{k_2}, \infty
\\
\left(\lambda_1 z_1, \lambda_2 z_2\right) & 
\begin{bmatrix}
a_1 & 0 \\
0 & a_2 
\end{bmatrix}, \frac{a_1}{a_2} \ne \lambda_1^{k_1} \lambda_2^{k_2}
&
0, \infty
\\
\left(\lambda_1 z_1, \lambda_2 z_2\right) & 
\begin{bmatrix}
a & 1 \\
0 & a 
\end{bmatrix}
&
\infty
\\
\left(\lambda z_1, \lambda^m z_2 + z_1^m \right) &
\begin{bmatrix}
 \lambda^k & 0 \\
0 & 1
\end{bmatrix}
&
c \, z_1^k, \infty
\\
\left(\lambda z_1, \lambda^m z_2 + z_1^m \right) &
\begin{bmatrix}
a_1 & 0 \\
0 & a_2
\end{bmatrix}, \frac{a_1}{a_2} \ne \lambda^k
&
0, \infty
\\
\left(\lambda z_1, \lambda^m z_2 + z_1^m \right) &
\begin{bmatrix}
a & 1 \\
0 & a
\end{bmatrix},
&
\frac{z_2}{a}\left(\frac{\lambda}{z_1}\right)^m  + c, \infty
\end{array}
\]
\end{center}
\caption{The meromorphic sections of the flat 
projective line bundles 
$\left(\C{2} \setminus 0\right) \times_{(F,g)} \Proj{1}$
on Hopf surfaces, 
with $c$ an arbitrary complex number and 
$P(u)$ and $Q(u)$ arbitrary polynomials, 
and $u=z_1^{m_1}/z_2^{m_2}$.}%
\label{table:ProjectiveLineBundles}
\end{table}
The proof is split up into several lemmas.
\begin{lemma}\label{lemma:Pooh}
Take any diagonal linear map $F : \C{2} \to \C{2}$, say
\[
 F\left(z_1,z_2\right)=\left(\lambda_1 z_1, \lambda_2 z_2\right)
\]
in the Poincar\'e domain and any 
nondiagonalizable linear 
fractional transformation $g \in \PSL{2,\C{}}$.  
Then a meromorphic section 
of $\left( \C{2} \setminus 0 \right) \times_{(F,g)} \Proj{1}$
is precisely a constant mapping to the fixed 
point of $g$ on $\Proj{1}$. 
\end{lemma}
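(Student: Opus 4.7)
The plan is to reduce to a single scalar equation in an affine coordinate and then apply Proposition~\ref{proposition:Mall} to the differential. After conjugating within $\PSL{2,\C{}}$, assume $g = \begin{bmatrix} 1 & 1 \\ 0 & 1 \end{bmatrix}$, whose unique fixed point on $\Proj{1}$ is $\infty$ and which acts in the affine coordinate by $w \mapsto w+1$. A meromorphic section is then either constantly $\infty$, which is the constant section to the fixed point allowed by the convention preceding Proposition~\ref{proposition:MeromorphicSections}, or a meromorphic function $w$ on $\C{2} \setminus 0$ satisfying $w \circ F = w+1$. By Levi's theorem $w$ extends meromorphically across the origin; I must rule out this second possibility.

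\textbf{Differentiation and the generic case.} Differentiating $w \circ F = w+1$ yields $F^{*} dw = dw$, so each $\partial_{i} w$ is a meromorphic section of the line bundle with parameter $\lambda_i^{-1}$. In the generic diagonal case the condition $\lambda_i^{-1} = \lambda_1^{k_1}\lambda_2^{k_2}$ has a unique integer solution, so Proposition~\ref{proposition:Mall} forces $\partial_1 w = c_1/z_1$ and $\partial_2 w = c_2/z_2$. Integrating gives $w = c_1 \log z_1 + c_2 \log z_2 + C$, and single-valuedness on $\C{2} \setminus 0$ kills $c_1, c_2$, making $w$ constant and contradicting $w \circ F - w = 1$.

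\textbf{Hyperresonant case.} Set $u = z_1^{m_1}/z_2^{m_2}$. Proposition~\ref{proposition:Mall} now gives $\partial_1 w = R(u)/z_1$ and $\partial_2 w = S(u)/z_2$ for some rational $R, S$. Equality of mixed partials forces $S'(u) = -(m_2/m_1) R'(u)$, hence $S = -(m_2/m_1) R + C_0$, and substituting the identity $du/u = m_1\, dz_1/z_1 - m_2\, dz_2/z_2$ to eliminate $dz_1/z_1$ gives
\[
dw = \frac{R(u)}{m_1 u}\, du + C_0\, \frac{dz_2}{z_2}.
\]
Single-valuedness of $w$ on $\C{2} \setminus 0$ forces $C_0 = 0$ and forces all residues of $R(u)\, du/u$ to vanish; hence $w = V(u)$ for some rational $V$. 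Since $u \circ F = u$ this $w$ is $F$-invariant, contradicting $w \circ F = w+1$. The delicate step is the last residue analysis: one must check that a $\log(u-u_0)$ term really is multi-valued on $\C{2} \setminus 0$, using that although $\C{2} \setminus 0$ is simply connected, a small loop around the hypersurface $\{u = u_0\}$ in the complement maps under $u$ to a generator of $\pi_1(\Proj{1} \setminus \{u_0\})$.
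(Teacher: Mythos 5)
Your argument is correct, but it takes a genuinely different route from the paper's. The paper restricts the putative section $f$ to the two coordinate axes, uses the inconsistent constant term of the Laurent expansion under $f\left(0,\lambda_2 z_2\right)=f\left(0,z_2\right)+\tfrac{1}{a}$ to force $f=\infty$ on both axes, and then writes $f = h/\left(z_1^{\ell_1}z_2^{\ell_2}W\right)$ with $W$ a Weierstrass polynomial; equivariance forces $W$ and $h$ to transform as sections of line bundles, and comparing a single Taylor coefficient yields the contradiction $b_0=0$. You instead differentiate the cocycle identity $w\circ F = w+1$, observe that $\partial_1 w$ and $\partial_2 w$ are meromorphic sections of the line bundles with parameters $\lambda_1^{-1}$ and $\lambda_2^{-1}$, invoke Proposition~\ref{proposition:Mall} to write them down explicitly, and integrate back, using single-valuedness of $w$ (vanishing of the periods of $dw$ around the hypersurfaces $z_1=0$, $z_2=0$ and $u=u_0$) to conclude $dw=d\left(V\circ u\right)$, so that $w$ is $F$-invariant, contradicting $w\circ F=w+1$. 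This is the same differentiation trick the paper borrows from Kodaira for the exceptional Hopf surfaces, and it is shorter and more conceptual than the Weierstrass-polynomial computation; the price is the period analysis at the end, which you rightly flag as the delicate point and which does require the loops around $z_1=0$ and $z_2=0$ (not only around $u=u_0$) in order to kill $C_0$ and the residues at $u=0$ and $u=\infty$. One small slip in your parenthetical justification: $\pi_1\left(\Proj{1}\setminus\left\{u_0\right\}\right)$ is trivial, since $\Proj{1}$ minus a point is $\C{}$; what you actually need is that a small transversal loop around the hypersurface $u=u_0$ maps under $u$ to a loop of winding number one about $u_0$, so that the period of $c\,du/\left(u-u_0\right)$ along it equals $2\pi i c$. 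With that correction the argument is complete.
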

\begin{proof}
We can assume that
\[
 g =
\begin{bmatrix}
a & 1 \\
0 & a
\end{bmatrix}
\]
with $a \ne 0$. Suppose that $f$ is a meromorphic section, which we identify with a meromorphic 
function on $\C{2}$. The poles of $f\left(0,z_2\right)$ can't accumulate to $0$, unless $f$ is
infinite at the generic point of $z_1=0$. So either $f=\infty$ on $z_1=0$ or else
we can take a Laurent series expansion of $f\left(0,z_2\right)$ around $z_2=0$. 
Plug in $z_1=0$ to see that
\[
 f\left(0,\lambda^m z_2\right)=f\left(0,z_2\right)+\frac{1}{a}.
\]
The Laurent expansion has inconsistent constant term, unless $f$ is infinite 
on the line $z_1=0$. The same argument swapping the roles of
$z_1$ and $z_2$ ensures that $f$ is infinite on the line $z_2=0$.
Therefore $f$ must be infinite at all points of 
both coordinate axes. Suppose that
$f$ is not infinite everywhere. So we can write 
\[
 f = \frac{h}{z_1^{\ell_1} z_2^{\ell_2} W}
\]
with $\ell_1, \ell_2 > 0$, where $W$ is 
a Weierstrass polynomial (for one or the 
other of the axes) and $h$ is holomorphic, 
and $h$ and $W$ have no common factors 
among holomorphic functions. We will 
pick $\ell_1$ and $\ell_2$ as large as 
possible to keep $W$ holomorphic, so 
$W$ will be finite and nonzero at the 
generic point of both axes. To be a 
meromorphic section, we need
\[
 f\left(\lambda_1 z_1, \lambda_2 z_2\right)
=f\left(z_1,z_2\right)+\frac{1}{a}.
\]
In terms of the Weierstrass polynomial,
\[
 \frac{h\left(\lambda_1 z_1, \lambda_2 z_2\right)}%
{\lambda_1^{\ell_1} \lambda_2^{\ell_2} z_1^{\ell_1} z_2^{\ell_2} 
W\left(\lambda_1 z_1, \lambda_2 z_2\right)}
=
\frac{h\left(z_1,z_2\right) + 
\frac{1}{a} z_1^{\ell_1} z_2^{\ell_2} %
W\left(z_1,z_2\right)}{z_1^{\ell_1} z_2^{\ell_2} W\left(z_1,z_2\right)}.
\]
Clearly $z_1^{\ell_1} z_2^{\ell_2} 
W\left(\lambda_1 z_1, \lambda_2 z_2\right)$ 
is a Weierstrass polynomial (up to a 
constant factor) for the denominator of the left hand side, while 
$z_1^{\ell_1} z_2^{\ell_2} W\left(z_1,z_2\right)$ 
is a Weierstrass polynomial for the denominator of the right hand side. By the uniqueness of 
Weierstrass polynomials, $W$ transforms by scaling, 
so as a holomorphic section of a holomorphic line bundle, i.e. 
$W\left(\lambda_1 z_1, \lambda_2 z_2\right)
=\lambda_1^{k_1} \lambda_2^{k_2} 
W\left(z_1,z_2\right)$ for some integers $k_1$ and $k_2$. 
By remark~\ref{remark:lineOfResonances}, because $W$ is holomorphic, neither 
$k_1$ nor $k_2$ can be negative. Moreover, $h$ must transform according to
\[
 h\left(\lambda_1 z_1, \lambda_2 z_2\right)
=
\lambda_1^{k_1+\ell_1} 
\lambda_2^{k_2+\ell_2} 
\left( 
  h\left(z_1, z_2\right)+
  \frac{1}{a} z_1^{\ell_1} z_2^{\ell_2} 
  W\left(z_1,z_2\right)
\right).
\]
By proposition~\vref{proposition:Mall}, $W$ can be written as
\[
W\left(z_1,z_2\right)
=
\sum_{j=0}^{N} b_j z_1^{jm_1} z_2^{(N-j)m_2}
\]
for some complex numbers $b_0, b_1, \dots, b_N$.

In order that $W$ not vanish on either axis, we must have
\[
 W\left(z_1,z_2\right)
=
\sum_{s=0}^N b_s z_1^{sm_1} z_2^{(N-s)m_2}
\]
with $b_0 \ne 0$ and $b_N \ne 0$. In particular, 
we can take 
$\left(k_1, k_2\right)=\left(0,Nm_2\right)$. 
Expanding $h$ into a Taylor series
\[
 h\left(z_1, z_2\right)
=\sum_{n_1, n_2} a_{n_1, n_2} z_1^{n_1} z_2^{n_2},
\]
and plugging in $\left(k_1,k_2\right)=\left(0,Nm_2\right)$,
we see that the term $a_{\ell_1, \ell_2+N \, m_2}$ satisfies
\[
 \lambda_1^{\ell_1} 
  \lambda_2^{\ell_2 + N \, m_2} 
  a_{\ell_1, \ell_2+N \, m_2} = 
 \lambda_1^{\ell_1} 
  \lambda_2^{\ell_2 + N \, m_2} 
  \left( 
    a_{\ell_1, \ell_2+N \, m_2} + 
    \frac{b_0}{a} 
  \right).
\]
This forces $b_0=0$, a contradiction.
Therefore there are no meromorphic sections 
of such bundles except for $f=\infty$. 
\end{proof}
\begin{lemma}
Take any diagonal linear map $F : \C{2} \to \C{2}$, say
\[
 F\left(z_1,z_2\right)=\left(\lambda_1 z_1, \lambda_2 z_2\right)
\]
in the Poincar\'e domain and any linear 
fractional transformation $g \in \PSL{2,\C{}}$.  If
\[
g = 
\begin{bmatrix}
 \lambda_1^{k_1} \lambda_2^{k_2} & 0 \\
0 & 1 
\end{bmatrix}
\]
then the flat projective line bundle 
$\left( \C{2} \setminus 0\right) 
\times_{(F,g)} \Proj{1}$ 
has meromorphic sections
either the constant map $f\left(z_1,z_2\right)=\infty$ or
\[
f\left(z_1,z_2\right)=z_1^{k_1} z_2^{k_2} \frac{P(u)}{Q(u)} 
\]
where
\[
u=z_1^{m_1}/z_2^{m_2}
\]
and $m_1$ and $m_2$ are from the 
hyperresonance $\lambda_1^{m_1} = \lambda_2^{m_2}$. 
(We take $P(u)/Q(u)$ constant if there 
is no hyperresonance.) 
If $g$ is not conjugate in $\PSL{2,\C{}}$ 
to a matrix of the required form above, 
then a meromorphic section is precisely 
a constant mapping to one of the fixed 
points of $g$ on $\Proj{1}$. 
\end{lemma}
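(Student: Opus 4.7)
The plan is to reduce this to the line-bundle classification of Proposition~\ref{proposition:Mall}. I first observe that a diagonal matrix $g = \operatorname{diag}\left(a_1,a_2\right) \in \PSL{2,\C{}}$ acts on $\Proj{1}$ as the Möbius transformation $w \mapsto \left(a_1/a_2\right) w$, whose fixed points are $0$ and $\infty$. Hence the two constant maps $f \equiv 0$ and $f \equiv \infty$ are automatically $F$-equivariant and give meromorphic sections for any choice of $g$ of this form.

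Next, for a meromorphic section $f$ which is not identically $\infty$, I would work in the affine chart $\C{} \subset \Proj{1}$ and view $f$ as a meromorphic function on $\C{2} \setminus 0$. The section equation $f\left(F(z)\right) = g \cdot f(z)$ then becomes the scalar twist equation $f\left(F(z)\right) = \left(a_1/a_2\right) f(z)$, which is by definition the condition that $f$ be a meromorphic section of the line bundle $\left(\C{2} \setminus 0\right) \times_{\left(F,\,a_1/a_2\right)} \C{}$. Under the hypothesis that $a_1/a_2 = \lambda_1^{k_1} \lambda_2^{k_2}$, Proposition~\ref{proposition:Mall} then directly classifies these sections: in the hyperresonant case they are $z_1^{k_1} z_2^{k_2} P(u)/Q(u)$ with $u = z_1^{m_1}/z_2^{m_2}$, and in the generic case they reduce to $c\, z_1^{k_1} z_2^{k_2}$, which is precisely the degenerate subcase of the statement in which $P$ and $Q$ are constants.

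For the second assertion, suppose $g$ is diagonalizable (the nondiagonalizable case is already covered by Lemma~\ref{lemma:Pooh}) but not conjugate in $\PSL{2,\C{}}$ to a matrix of the form $\operatorname{diag}\left(\lambda_1^{k_1}\lambda_2^{k_2},1\right)$; equivalently, its eigenvalue ratio $a_1/a_2$ is not a monomial in $\lambda_1,\lambda_2$. Any meromorphic section not identically $0$ or $\infty$ would again, by the same reduction, produce a nonzero meromorphic section of the twisted line bundle with parameter $a_1/a_2$. But Proposition~\ref{proposition:Mall} says this bundle admits only the zero section. So every meromorphic section must be constant with value in the fixed point set $\{0,\infty\}$ of $g$.

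The main subtlety I anticipate is the careful bookkeeping between the projective and affine pictures: one must verify that an arbitrary meromorphic section of the $\Proj{1}$-bundle which is not identically $\infty$ is really a meromorphic function on $\C{2} \setminus 0$ satisfying the scalar twist equation (and not, say, infinite on some codimension-one locus in a way incompatible with the affine chart), and conversely that any line-bundle solution of the twist equation extends across its pole locus as a well-defined meromorphic section of the projective bundle. Once that translation is justified, the lemma is a direct corollary of Proposition~\ref{proposition:Mall}.
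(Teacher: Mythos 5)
Your proposal is correct and follows essentially the same route as the paper: dispose of the nondiagonalizable case via Lemma~\ref{lemma:Pooh}, then identify a non-infinite meromorphic section of the diagonal $\Proj{1}$-bundle with a meromorphic section of the scalar line bundle twisted by the eigenvalue ratio $a_1/a_2$, and invoke the classification in Proposition~\ref{proposition:Mall} to get the form $z_1^{k_1} z_2^{k_2} P(u)/Q(u)$ when $a_1/a_2$ is a monomial in $\lambda_1,\lambda_2$ and only the constants $0,\infty$ otherwise. The chart-translation subtlety you flag is real but is exactly the point the paper also passes over without comment, so no further work is needed.
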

\begin{proof}
If $g$ is not diagonalizable, then
the result is proven in lemma~\vref{lemma:Pooh}.
So assume that $g$ is diagonal, say
\[
 g=
\begin{bmatrix}
 a_1 & 0 \\
0 & a_2
\end{bmatrix}
\]
and the contraction $F$ is diagonal linear.
Then a meromorphic section is a meromorphic 
function $f : \C{2} \setminus 0 \to \C{}$ for which 
$f\left(\lambda_1 z_1, \lambda_2 z_2\right)=
\frac{a_1}{a_2}f\left(z_1,z_2\right)$. 
Again, such a function $f$ has the form
\[
 f\left(z_1,z_2\right)
=z_1^{k_1} z_2^{k_2} 
\frac{P(u)}{Q(u)} 
\text{ where } u=z_1^{m_1}/z_2^{m_2}
\]
and $a_1/a_2$ must have the form $\lambda_1^{k_1} \lambda_2^{k_2}$ 
or else $f$ is constant and equal to $0$ or $\infty$. 
Therefore
\[
 g=
\begin{bmatrix}
 \lambda_1^{k_1} \lambda_2^{k_2} & 0 \\
0 & 1
\end{bmatrix}
\]
up to rescaling, or else there are no meromorphic sections
other than $f=0$ and $f=\infty$.
\end{proof}

\begin{corollary}
Every meromorphic section of a flat projective 
line bundle on any Hopf surface is a holomorphic section.
\end{corollary}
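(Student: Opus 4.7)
The plan is to verify the statement case-by-case using the explicit list of meromorphic sections in table~\ref{table:ProjectiveLineBundles}. A meromorphic map $f$ from a complex manifold to $\Proj{1}$ is holomorphic precisely when, around each point, it admits a representation $[\phi:\psi]$ with $\phi, \psi$ holomorphic and not simultaneously zero; equivalently, at each point either $f$ or $1/f$ extends holomorphically with a finite value. So for each row of the table it suffices to exhibit such a representation on a covering of $\C{2}\setminus 0$.

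The easy rows dispose of themselves. The constant sections $f = 0$ or $f = \infty$, and the sections $f = c$ coming from fixed points of $g$, are obviously holomorphic maps to $\Proj{1}$. The monomial sections $c \, z_1^{k_1} z_2^{k_2}$ (diagonal generic or resonant with $P/Q$ absent) and $c\,z_1^{k}$ (exceptional case) are handled by writing them as $[c z_1^{k_1} z_2^{k_2} : 1]$ and clearing negative exponents: one obtains a ratio of two monomials that vanish together only at the origin, which lies outside $\C{2}\setminus 0$. For the exceptional nondiagonalizable row, the section $\frac{z_2}{a}\left(\frac{\lambda}{z_1}\right)^m + c$ rewrites as $[\lambda^m z_2 + a c \, z_1^m : a\, z_1^m]$; the two entries vanish simultaneously only when $z_1 = 0$ and $\lambda^m z_2 = 0$, i.e.\ at the origin.

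The main work, and the only place where something could go wrong, is the hyperresonant diagonal row, $f = z_1^{k_1} z_2^{k_2}\, P(u)/Q(u)$ with $u = z_1^{m_1}/z_2^{m_2}$. From proposition~\ref{proposition:Mall} I may assume $P, Q$ are coprime polynomials with $P(0), Q(0) \neq 0$. I check holomorphy at each $(a,b) \in \C{2}\setminus 0$ separately. When $a, b \ne 0$, the value $u = a^{m_1}/b^{m_2}$ is a nonzero finite complex number, and coprimality of $P, Q$ makes $P(u)/Q(u)$ a well-defined element of $\Proj{1}$ near this point; the monomial prefactor is holomorphic and nonzero, so $f$ is a holomorphic map to $\Proj{1}$ there. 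When $a = 0, b \ne 0$, one has $u \to 0$ and $P(u)/Q(u) \to P(0)/Q(0)$, a nonzero finite value, so locally $f = z_1^{k_1} \cdot (\text{nowhere-zero holomorphic})$, holomorphic to $\Proj{1}$ by clearing a negative $k_1$. When $b = 0, a \ne 0$, I expand in $v = 1/u = z_2^{m_2}/z_1^{m_1}$: writing $P(u)/Q(u) = u^{d_P - d_Q} R(v)$ with $R$ holomorphic and nonvanishing at $v = 0$, I get $f = z_1^{k_1 + (d_P - d_Q)m_1} z_2^{k_2 - (d_P - d_Q)m_2}\, R(v)$, again a holomorphic map to $\Proj{1}$ by the same monomial argument.

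The anticipated obstacle is this hyperresonant case: superficially $P(u)/Q(u)$ looks like it might pick up indeterminacy on either coordinate axis, where $u = 0$ or $u = \infty$. Both potential indeterminacies are defused by the nonvanishing of $P$ and $Q$ at $u = 0$ (which is ensured by the final cancellation step of the proof of proposition~\ref{proposition:Mall}) and by the clean behavior of the rational function $P/Q$ at $u = \infty$. All other cases are essentially formal.
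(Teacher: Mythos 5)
Your proof is correct and follows essentially the same route as the paper: both use the criterion that a meromorphic function is a holomorphic map to $\Proj{1}$ exactly when at each point either $f$ or $1/f$ is finite, and both verify this from the explicit list of sections, the key observation being that the zero and pole loci lie on the curves $z_1=0$, $z_2=0$ and $z_1^{m_1}=c\,z_2^{m_2}$, which pairwise meet only at the excluded origin. You simply carry out in detail the pointwise check that the paper compresses into one sentence.
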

\begin{proof}
In general, a meromorphic function $f$ on a complex surface need not be a 
holomorphic map to $\Proj{1}$. It will be a holomorphic map to $\Proj{1}$ just 
when, near each point, it can be made a holomorphic function by a linear fractional 
transformation. Equivalently, either $f$ or $1/f$ is a holomorphic function 
at each point. Equivalently, either $f$ is finite, or $1/f$ is finite near each point. 
Equivalently, the zero locus of $f$ does not cross the poles of $f$. Every meromorphic 
section of any flat bundle of projective lines over a diagonal Hopf surface is holomorphic, 
as the zeroes and poles occur only along the curves $z_1=0$, $z_2=0$ and 
$z_1^{m_1}=\left(\text{constant}\right)z_2^{m_2}$. 
\end{proof}

\begin{lemma}
 Consider an exceptional map $F$ in the Poincar\'e domain. We can assume that 
$F\left(z_1,z_2\right)=\left(\lambda z_1, \lambda^m z_2 + z_1^m\right)$. 
Take a linear fractional transformation $g$ fixing a single point of $\Proj{1}$. We can assume that 
\[
 g =
\begin{bmatrix}
a & 1 \\
0 & a 
\end{bmatrix}.
\]
Then the meromorphic sections of the $\Proj{1}$-bundle $\left(\C{2} \setminus 0\right) 
\times_{\left(F,g\right)} \Proj{1}$ are precisely the functions
\[
f = \frac{z_2}{a} \left(\frac{\lambda}{z_1}\right)^m + b,
\]
for any constant $b$. All meromorphic sections are holomorphic.
\end{lemma}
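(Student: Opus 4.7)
The plan is to reduce the lemma to the already-established classification of meromorphic functions on $S_F$ by producing one explicit meromorphic section and then exploiting that any two meromorphic sections differ by an $F$-invariant meromorphic function. Since $g$ fixes $\infty \in \Proj{1}$ and acts on the affine chart of $\Proj{1}$ by the translation $w \mapsto w + 1/a$, a meromorphic section which is not identically $\infty$ is precisely a meromorphic function $f$ on $\C{2} \setminus 0$ satisfying the inhomogeneous functional equation
\[
f(F(z)) = f(z) + \frac{1}{a}.
\]

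First I would verify by direct calculation that the candidate
\[
h(z_1,z_2) := \frac{\lambda^m z_2}{a\, z_1^m}
\]
is a meromorphic section. Plugging in $F(z)=(\lambda z_1,\lambda^m z_2+z_1^m)$ yields $h(F(z)) = \lambda^m(\lambda^m z_2 + z_1^m)/(a\lambda^m z_1^m) = h(z) + 1/a$, as required.

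The main step is then: for any other meromorphic section $f \not\equiv \infty$, form the difference $\Phi := f - h$. This is a meromorphic function on $\C{2}\setminus 0$, and the inhomogeneous $1/a$ terms cancel, so $\Phi(F(z)) = \Phi(z)$. Hence $\Phi$ descends to a meromorphic function on the Hopf surface $S_F$. Since $F$ is exceptional, Lemma~\ref{lemma:MeromorphicFunctionsOnHopfSurfaces} forces $\Phi$ to be a constant $b \in \C{}$, and therefore $f = h + b$, which is exactly the asserted formula. Finally, to confirm that every meromorphic section is actually holomorphic as a map to $\Proj{1}$, I would check that the pole locus $\{z_1 = 0\}$ and the zero locus $\{\lambda^m z_2 + ab\, z_1^m = 0\}$ of $f = h+b$ meet only at the origin, which has been deleted from $\C{2}\setminus 0$; thus $f$ and $1/f$ are never simultaneously singular.

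The substantive obstacle is locating the explicit particular section $h$; once it is in hand, the argument is a one-line reduction to the meromorphic functions lemma rather than the more involved Weierstrass-polynomial analysis used in Lemma~\ref{lemma:Pooh}. A Weierstrass-polynomial proof along the lines of that lemma would also succeed, but would be considerably longer and would essentially rediscover $h$ along the way.
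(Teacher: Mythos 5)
Your proof is correct, and it takes a genuinely different and noticeably shorter route than the paper's. The paper does not guess a particular solution: it first shows any finite section must be infinite along $z_1=0$, writes $f=h/(z_1^{\ell}W)$ with $W$ a Weierstrass polynomial in $z_1$, uses uniqueness of Weierstrass factorization together with Proposition~\ref{proposition:Mall} to force $W=z_1^k$, and then pins down $h$ by a Taylor-coefficient computation following Kodaira, rediscovering your $h=\lambda^m z_2/(a\,z_1^m)$ only at the very end. You instead treat $f(F(z))=f(z)+1/a$ as an inhomogeneous linear functional equation: exhibit the particular solution $h$, note that the difference of any two finite sections is $F$-invariant and hence descends to a meromorphic function on $S_F$, and invoke Lemma~\ref{lemma:MeromorphicFunctionsOnHopfSurfaces} (whose proof is independent of this lemma, so there is no circularity) to conclude the difference is constant because $F$ is exceptional. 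Your final check that the zero and pole loci of $h+b$ meet only at the deleted origin correctly establishes holomorphicity. Two small remarks: the table in the paper also lists the constant section $\infty$, which your dichotomy $f\equiv\infty$ versus $f\not\equiv\infty$ already accommodates; and your method is special to the exceptional case precisely because a particular solution exists there --- in the companion Lemma~\ref{lemma:Pooh}, for diagonal $F$ with nondiagonalizable $g$, the whole point is that no finite section exists, so no such reduction is available and the Weierstrass machinery is genuinely needed.
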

\begin{proof}
 Suppose that $f$ is a meromorphic section, which we identify with a meromorphic 
function on $\C{2}$. The poles of $f\left(0,z_2\right)$ can't accumulate to $0$, unless $f$ is
infinite at the generic point of $z_1=0$. So either $f=\infty$ on $z_1=0$ or else
we can take a Laurent series expansion of $f\left(0,z_2\right)$ around $z_2=0$. 
Plug in $z_1=0$ to see that
\[
 f\left(0,\lambda^m z_2\right)=f\left(0,z_2\right)+\frac{1}{a}.
\]
The Laurent expansion has inconsistent constant term, unless $f$ is infinite 
on the line $z_1=0$. We can therefore write
\[
 f(z) = \frac{h(z)}{z_1^{\ell} W(z)}
\]
for some $\ell>0$, with $h(z)$ holomorphic and $W(z)$ a Weierstrass polynomial in $z_1$ 
not dividing into $h(z)$. Then
\[
 f(F(z))=\frac{h(F(z))}{\lambda^{\ell} z_1^{\ell} W(F(z))}
=\frac{h(z) + \frac{1}{a} \, z_1^{\ell} W(z)}{z_1^{\ell} W(z)}.
\]
Clearly (up to scaling by a constant) $W(z)$ 
is a Weierstrass polynomial for the denominator 
of $z_1^{\ell} f(F(z))$, as is $W(F(z))$, 
along the line $z_1=0$. By uniqueness of 
Weierstrass polynomials, $W(F(z))=c \, W(z)$ 
for some constant $c$. But then $W$ must be a 
holomorphic section of a line bundle on the 
associated Hopf surface.
By proposition~\vref{proposition:Mall},
$W\left(z_1,z_2\right)=z_1^k$ for some integer 
$k \ge 0$. So we can assume that
\[
 f(z)=\frac{h(z)}{z_1^{\ell}}
\]
with $\ell > 0$. Therefore 
\[
 h\left(\lambda z_1, \lambda^m z_2 + z_1^m\right)
=\lambda^{\ell} \left( h\left(z_1,z_2\right) 
+  \frac{z_1^{\ell}}{a} \right).
\]
Again restrict to $z_1=0$ to see that
\[
 h\left(0, \lambda^m z_2 \right)
=\lambda^{\ell} h\left(0,z_2\right). 
\]
Expand $h$ in a Taylor series to see 
that $\ell=km$ for some integer $k \ge 0$ and
\(
 h\left(0,z_2\right)=\frac{z_2^k}{a}.
\)
So we see that
\[
 h\left(\lambda z_1, \lambda^m z_2 + z_1^m\right)
=\lambda^{km} \left( h\left(z_1,z_2\right) 
+ \frac{z_1^{km}}{a} \right).
\]
Following Kodaira \cite{Kodaira:1966} p. 697 equation 100, 
we let $h_1=\pd{h}{z_2}$. Then we calculate that
\[
 h_1\left(F(z)\right) = \lambda^{(k-1)m} h_1(z).
\]
Therefore $h_1$ is a holomorphic section of a line bundle,
and by proposition~\vref{proposition:Mall},
\[
 h_1\left(z_1, z_2\right)=c \, z_1^{(k-1)m},
\]
so that
\[
 h\left(z_1,z_2\right)=c \, z_1^{(k-1)m} z_2 + \sum_{s=0}^{\infty} a_s z_1^s,
\]
for some complex numbers $a_s$, and we plug in to find
\begin{align*}
 0 &=
h\left(\lambda z_1, \lambda^m z_2 + z_1^m \right)
-\lambda^{km} \left( h\left(z_1,z_2\right) +  \frac{z_1^{km}}{a}\right)
\\
&=
\sum_{s=0}^{m}
\left(
\left(\lambda^s - \lambda^{km}\right) a_s
+ 
\delta_{s=km}\left(c-\frac{\lambda^m}{a}\right) \lambda^{(k-1)m}
\right) z_1^s.
\end{align*}
Looking at the $z_1^{km}$ coefficient yields $c=\lambda^m/a$. 
Plugging in $s\ne km$ yields $a_s=0$. Therefore
\[
 h\left(z_1, z_2\right)
=
\frac{\lambda^m \, z_1^{(k-1)m} z_2}{a} + b \, z_1^{km}.
\]
Finally
\[
 f\left(z_1,z_2\right)=\frac{\lambda^m z_2}{a z_1^m} + b.
\]
\end{proof}

\section{Geometric structures on Hopf surfaces}\label{sec:GeometricStructures}

\subsection{Geometric structures}

If $G/H$ is any homogeneous space, 
a $G/H$-structure is a maximal atlas of 
charts valued in $G/H$, with transition 
maps in $G$. The identity map of a homogeneous
space $G/H$ is contained in a unique
$G/H$-structure, and 
is called the \emph{model} $G/H$-structure.
If $G/H$ is affine 
(projective) space and $G$ is the group 
of affine (projective) transformations, 
then $G/H$-structures are called 
\emph{affine (projective) structures}. 
Clearly any linear Hopf surface $S_F$ 
bears an affine structure, since the 
transition map $F$ is linear. 

\begin{example}
For example, let
$G=\GL{2,\C{}}$ and $H$ the subgroup
fixing the point $(1,0) \in \C{2}$.
So $G/H=\C{2} \setminus 0$. Every
linear Hopf surface has an obvious
$G/H$-structure, since its universal
covering space is $G/H$ and its
covering group acts by an element of $G$.
\end{example}

\begin{example}
 If $G=\PSL{n+1,\C{}}$ and $H$ is the stabilizer
of a point of $\Proj{n}$, then a $G/H$-structure
is called a \emph{projective connection}.
\end{example}

\subsection{Developing maps and holonomy}

\begin{lemma}
Every $G/H$-structure on any manifold $M$ is obtained from a local diffeomorphism 
$\dev : \tilde{M} \to G/H$ of the universal covering space of $M$ 
(called the \emph{developing map}), equivariant for a homomorphism 
$\hol : \pi_1\left(M\right) \to G$ (called the \emph{holonomy}):
to recover the $G/H$-structure,
compose $\dev$ with a local inverse of $\tilde{M} \to M$
to give an atlas of local coordinates valued in $G/H$.

The pair $\left(\dev, \hol\right)$ are only defined up to 
the $G$-action $\left(g \, \dev, g \, \hol \, g^{-1}\right)$. 
Conversely, the $G$-orbit of this pair under this action determines the $G/H$-structure. 
Moreover, any choice of two maps 
$\dev : \tilde{M} \to G/H$ and $\hol : \pi_1\left(M\right) \to G$
with $\hol$ a group morphism and $\dev$ a $\hol$-equivariant
local diffeomorphism determines a unique $G/H$-structure on $M$.
\end{lemma}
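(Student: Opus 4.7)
The plan is to adapt Ehresmann's classical construction of the developing map by analytic continuation of a chart along paths in $M$. Let $\pi:\tilde{M}\to M$ denote the covering projection. First I would fix a basepoint $x_0\in M$ together with a chart $\varphi_0:U_0\to G/H$ from the $G/H$-atlas. For $\tilde{x}\in\tilde{M}$ I choose a representing path $\gamma$ in $M$ from $x_0$ to $\pi(\tilde{x})$, cover $\gamma$ by a finite chain $\varphi_0,\varphi_1,\ldots,\varphi_n$ of charts from the atlas with consecutive domains overlapping, and use the hypothesis that the transition maps lie in $G$ to pick inductively the unique $g_i\in G$ with $g_i\varphi_i=g_{i-1}\varphi_{i-1}$ on the overlap. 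Setting $\psi_i=g_i\varphi_i$ with $\psi_0=\varphi_0$, I would define $\dev(\tilde{x})=\psi_n(\pi(\tilde{x}))$. A refinement-of-chains argument shows this value depends only on $\gamma$, and a grid argument across a homotopy rel endpoints then shows it depends only on $\tilde{x}$.

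Next I would specialize this continuation to loops in order to define the holonomy: for $\sigma\in\pi_1(M,x_0)$ represented by a loop, the continuation returns to a chart of the form $\hol(\sigma)\varphi_0$ at $x_0$, which defines $\hol(\sigma)\in G$. Concatenation of loops makes $\hol$ a group morphism, and reading off the construction with $\gamma$ replaced by $\sigma\cdot\gamma$ yields the equivariance $\dev(\sigma\cdot\tilde{x})=\hol(\sigma)\dev(\tilde{x})$. Since each $\psi_i$ is a local diffeomorphism, so is $\dev$. The only freedom in the construction is the choice of starting chart: replacing $\varphi_0$ by $g\varphi_0$ propagates through every $\psi_i$, so $(\dev,\hol)$ is only well-defined up to the action $(\dev,\hol)\mapsto(g\,\dev,g\,\hol\,g^{-1})$.

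For the converse I would begin with $\dev$ and $\hol$ satisfying the equivariance condition. Covering $M$ by evenly covered open sets $V_\alpha$, I pick for each $\alpha$ a local inverse $s_\alpha:V_\alpha\to\tilde{M}$ of $\pi$; this gives charts $\dev\circ s_\alpha:V_\alpha\to G/H$, which are local diffeomorphisms because $\dev$ is. On an overlap, two such sections differ by a unique deck transformation $\sigma_{\alpha\beta}\in\pi_1(M)$, and the equivariance gives $\dev\circ s_\alpha=\hol(\sigma_{\alpha\beta})\cdot(\dev\circ s_\beta)$, so the transition is the element $\hol(\sigma_{\alpha\beta})\in G$. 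This produces a $G/H$-atlas on $M$. To close the loop, I would check that the two constructions are mutually inverse modulo the stated $G$-action: starting from an atlas and running the analytic continuation recovers the original charts up to refinement, while starting from $(\dev,\hol)$ and passing back through the sheet-section construction recovers the same developing map from the first chart chosen at $x_0$.

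The main obstacle, and the only step that uses the hypothesis on transition maps in an essential way, will be the homotopy invariance of the analytic continuation — equivalently, the statement that a null-homotopic loop gives trivial holonomy. I would handle this by covering a given null-homotopy by a grid of overlapping chart neighborhoods, small enough that each grid square sits inside a single chart domain; the local uniqueness of the adjusting $g_i$ on each overlap forces the composition around each elementary square to be the identity, and composing across the grid propagates this to the whole homotopy. All remaining steps are either direct bookkeeping or an immediate reading off of the definitions.
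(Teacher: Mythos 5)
Your proposal is correct and is precisely the standard Ehresmann--Thurston analytic-continuation argument; the paper gives no proof of its own beyond citing Thurston (p.~140), and the proof cited there is exactly the one you reconstruct, including the grid argument for homotopy invariance and the sheet-section construction for the converse. The only point worth making explicit is that the uniqueness of the adjusting elements $g_i$ (on which the well-definedness of the continuation rests) uses the fact that an element of $G$ acting as the identity on a nonempty open subset of the connected space $G/H$ acts as the identity everywhere --- true here because the action is holomorphic and effective --- so you should state that rigidity property rather than leave it implicit.
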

\begin{proof}
 See Thurston \cite{Thurston:1997} p. 140.
\end{proof}

Therefore we will classify $G/H$-structures on Hopf surfaces
by writing out their developing maps and holonomies.

\begin{definition}
If $M$ is a complex manifold and $G$ is a complex Lie group with
$H$ a closed complex subgroup, then a $G/H$-structure is called
\emph{holomorphic} if its developing map is holomorphic,
or equivalently if all of the charts of the structure are
holomorphic maps. From now on all locally homogeneous structures will be assumed
holomorphic.
\end{definition}

\begin{definition}
A structure is called \emph{complete} if the developing map is onto. 
\end{definition}

\begin{definition}
A structure is called \emph{essential} if the developing map is injective.
\end{definition}

Essential structures are precisely the induced structures
on manifolds covered by open sets of the model. For example, a Riemann surface of any
genus has the obvious holomorphic projective connection given by
the inclusion $\Delta \subset \C{} \subset \Proj{1}$.

\begin{definition}
On a Hopf surface $S_F$, the fundamental group has the distinguished generator $F$, 
so the holonomy map $\hol$ is determined by the element $\hol(F) \in G$; 
refer to this element as the \emph{holonomy generator} of the $G/H$-structure.
\end{definition}

\begin{definition}
A \emph{branched $G/H$-structure} on a manifold $M$ is a choice of map 
$\tilde{M} \to G/H$ (again called the \emph{developing map}), equivariant for a 
homomorphism $\pi_1(M) \to G$ (again called 
the \emph{holonomy}), determined up to 
the same $G$-action. 
\end{definition}
The developing map of a branched structure might \emph{not} 
be a local biholomorphism. The basic difficulty we encounter in this paper is 
that of distinguishing branched from unbranched structures. Obstructions to 
structures are usually also obstructions to branched structures, so if there is 
a branched structure, then most of the obstructions we can come up with will 
not help us to rule out the possibility of an unbranched structure.

\subsection{Induction of structures}

\begin{definition}
Suppose that $G_0/H_0$ and $G/H$ are homogeneous spaces. 
A \emph{morphism} of homogeneous spaces $\Phi : G_0/H_0 \to G/H$ means a
morphism $\Phi : G_0 \to G$ of Lie groups so that $\Phi\left(H_0\right) \subset H$.
We will also denote the induced map $G_0/H_0 \to G/H$ by the letter $\Phi$.
A morphism of homogeneous spaces is called an \emph{avatar} if the induced smooth map $G_0/H_0 \to G/H$
is a local diffeomorphism.  
\end{definition}

\begin{definition}
If $\Phi : G_0/H_0 \to G/H$ is an avatar, and we have a $G_0/H_0$-structure on a manifold $M$,
with developing map $\dev_0 : \tilde{M} \to G_0/H_0$ and holonomy
$\hol_0 : \pi_1(M) \to G_0$, then the \emph{induced} $G/H$-structure is the one
given by $\dev = \Phi \circ \dev_0$ and $\hol = \Phi \circ \hol_0$.
\end{definition}

\section{The model}\label{section:TheModel}

\subsection{Definition}

As usual, we treat points of $\Proj{1}$ as lines through $0$ in $\C{2}$, and we 
write $\OO{n}$ for the bundle over $\Proj{1}$ whose fiber over a point 
$L \in \Proj{1}$ is the $n$-fold symmetric product $\Sym{n}{L}^*$, if $n>0$, and $\Sym{|n|}{L}$ if $n<0$, 
and $\C{}$ if $n=0$. We will denote the total space of the
bundle $\OO{n}$ also as $\OO{n}$. Clearly $\OO{n}$ is a complex surface.
For now, let's assume that $n>0$ and write the points of $\OO{n}$ 
as pairs $(L,q)$ with $q \in \Sym{n}{L}^*$. Thus the global sections of $\OO{n} \to \Proj{1}$
are the homogeneous polynomials of degree $n$, $\Sym{n}{\C{2}}^*$. Let 
\[
G=\Gn 
\]
act on $\OO{n}$ by $(g,p)(L,q)=\left(gL,q \, g^{-1} + \left.p\right|_{gL}\right)$. 
The multiplication in $G$ is $\left(g_0,p_0\right)\left(g_1,p_1\right)
=\left(g_0g_1,p_0+p_1 g_0^{-1}\right)$ and the inverse operation is 
$\left(g,p\right)^{-1}=\left(g^{-1},-pg\right)$. Let $H$ be the stabilizer of 
$\left(L_0,0\right)$, where $L_0$ is the line $z_2=0$ in $\C{2}$; i.e.
\[
H=\Hn.
\]
Clearly $\OO{n}=G/H$. Moreover, $G$ acts freely on $\OO{n}$.
The action of $G$ preserves the fiber bundle map $\OO{n} \to \Proj{1}$, and preserves 
the affine structure on each fiber. It also acts transitively on the 
global sections of $\OO{n} \to \Proj{1}$, a family of curves transverse to the 
fibers. Moreover it acts on $\Proj{1}$ via a surjection to the group of linear fractional
transformations. 

Every surface with $\OO{n}$-structure inherits a foliation, corresponding to 
the fiber bundle map, and inherits a family of curves which locally are 
identified with the global sections. Locally, on open sets on which the
foliation is a fibration, the base space of the fibration is a Riemann
surface with projective structure; in this sense the foliation has a 
\emph{transverse projective structure}.

\subsection{Ordinary differential equations and geometric structures}

It is well known (see Lagrange \cite{Lagrange:1957a,Lagrange:1957b}, Fels \cite{Fels:1993,Fels:1995}, 
Dunajski and Tod \cite{Dunajski/Tod:2006}, Godli{\'n}ski and Nurowski \cite{Godlinski/Nurowski:2007}, 
Doubrov \cite{Doubrov:2008}) that every (real or holomorphic) scalar ordinary differential equation 
of order $n+1\ge 3$ has a symmetry Lie algebra of point transformations of dimension at most $n+5$, and this 
dimension is acheived just precisely for the ordinary differential equations which are locally 
identified by point transformation with the equation $\frac{d^{n+1}y}{dx^{n+1}}=0$. 
Moreover, every holomorphic scalar ordinary differential equation locally point equivalent to 
$\frac{d^{n+1}y}{dx^{n+1}}=0$ is locally determined by, and locally determines, an 
$\OO{n}$-structure. Each solution of the differential equation is identified by the developing 
map with a global section of $\OO{n}$. 

\subsection{Conjugacy classes in the symmetry group}

Recall that
\[
G=\Gn.
\]
In this section we show that every element $(g,p) \in G$ is 
conjugate to one of a certain normal form defined in definition~\vref{definition:NormalForm}.

%
The conjugates of an element $(g,p) \in G$ are the elements of the form
\[
 \left(g_0 g g_0^{-1}, p g_0^{-1} + p_0 - p_0 g_0 g^{-1} g_0^{-1}\right).
\]
\begin{lemma}\label{lemma:InvariantPolynomials}
Pick a matrix $g \in \GL{2,\C{}}$. There are no nonzero $g$-invariant 
homogeneous polynomials of degree $n$ just when, for all homogeneous 
polynomials $p$ of degree $n$, $(g,p)$ is conjugate to $(g,0)$.
\end{lemma}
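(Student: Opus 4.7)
The plan is to reduce the iff to a short linear-algebra computation on the finite-dimensional vector space $V := \Sym{n}{\C{2}}^{*}$, on which $\GL{2,\C{}}$ acts via the representation $\sigma(h) q := q h^{-1}$.

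First I would apply the conjugation formula displayed just before the lemma. Requiring the conjugate of $(g,p)$ by $(g_{0},p_{0})$ to equal $(g,0)$ forces $g_{0}gg_{0}^{-1}=g$, i.e., $g_{0}$ centralizes $g$, and then the second coordinate becomes
\[
p_{0}+pg_{0}^{-1}-p_{0}\,g_{0}g^{-1}g_{0}^{-1}=0.
\]
Since $g_{0}g^{-1}g_{0}^{-1}=g^{-1}$ whenever $g_{0}\in Z(g)$, this rewrites as
\[
(I-\sigma(g))\,p_{0}=-\sigma(g_{0})\,p,
\]
so $(g,p)$ is conjugate to $(g,0)$ precisely when $-\sigma(g_{0})p$ lies in the image of $I-\sigma(g)$ for some $g_{0}\in Z(g)$.

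The key observation is that because $g_{0}$ centralizes $g$, the operator $\sigma(g_{0})$ commutes with $\sigma(g)$ and hence with $I-\sigma(g)$, so it preserves $\operatorname{image}(I-\sigma(g))$; being invertible on $V$, it restricts to a bijection of that image with itself. Consequently the choice of $g_{0}$ plays no role, and $(g,p)\sim(g,0)$ holds if and only if $p \in \operatorname{image}(I-\sigma(g))$.

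It then remains to chain equivalences: the statement "$(g,p)\sim(g,0)$ for every $p\in V$" is equivalent to $I-\sigma(g)$ being surjective on $V$; by finite-dimensionality, to its being injective; and injectivity is the vanishing of $\ker(I-\sigma(g))$, which is by definition the space of $g$-invariant homogeneous polynomials of degree $n$. Both directions of the lemma drop out at once. There is no real obstacle here; the only point warranting a moment's thought is precisely the key observation above, namely that varying $g_{0}$ over the centralizer of $g$ does not enlarge the collection of admissible $p$.
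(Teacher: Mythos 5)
Your proof is correct and follows essentially the same route as the paper's: reduce the question to the linear map $p_0 \mapsto p_0 g^{-1} - p_0$ on the finite-dimensional space $\Sym{n}{\C{2}}^*$, whose kernel is the space of $g$-invariant polynomials, and use that surjectivity is equivalent to injectivity. You are in fact slightly more careful than the paper, which only considers conjugation by elements of the form $(I,p_0)$ and leaves implicit the point you make explicit --- that allowing a general $g_0$ centralizing $g$ does not enlarge the set of $p$ for which $(g,p)$ is conjugate to $(g,0)$, since $\sigma(g_0)$ preserves the image of $I-\sigma(g)$.
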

\begin{proof}
We can take $g_0=I$, and then we have to solve $p_0 g^{-1} - p_0 = p$. 
The kernel of the map $p_0 \mapsto p_0 g^{-1} - p_0$ 
is precisely the $g$ invariant homogeneous polynomials of degree $n$. 
Therefore the linear map $p_0 \mapsto p_0 g^{-1} - p_0$ is 
onto just when there are no $g$-invariant polynomials. 
\end{proof} 
\begin{corollary}
If $g \in \GL{2,\C{}}$ lies in the Poincar\'e domain, or if $g^{-1}$ does, then, 
for any homogeneous polynomial $p$ of any positive degree, $(g,p)$ is conjugate to $(g,0)$ 
\end{corollary}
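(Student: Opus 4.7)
The plan is to reduce immediately to the previous lemma: by Lemma~\ref{lemma:InvariantPolynomials}, it suffices to show that $g$ admits no nonzero homogeneous invariant polynomial of degree $n$, for any $n\ge 1$.

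To see this, I would put $g$ in upper triangular form by choosing a basis $(e_1,e_2)$ of $\C{2}$ adapted to a $g$-invariant flag; in that basis $g$ has diagonal entries $\lambda_1,\lambda_2$ (the eigenvalues of $g$, possibly equal in the Jordan case). The induced action on $\Sym{n}{\C{2}}^*$, taken in the ordered monomial basis $z_1^n, z_1^{n-1}z_2,\dots,z_2^n$, is again upper triangular, with diagonal entries the monomials $\lambda_1^{k_1}\lambda_2^{k_2}$ over $k_1+k_2=n$. Hence the eigenvalues of $g$ acting on $\Sym{n}{\C{2}}^*$ are precisely these products. An invariant polynomial is nothing but a $1$-eigenvector, so to exclude nonzero invariants it suffices to exclude $1$ from this list.

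If $g$ is in the Poincar\'e domain, then $|\lambda_i|<1$, so for $k_1+k_2=n\ge 1$,
\[
|\lambda_1^{k_1}\lambda_2^{k_2}|=|\lambda_1|^{k_1}|\lambda_2|^{k_2}<1,
\]
and $1$ never occurs. If instead $g^{-1}$ is in the Poincar\'e domain, the same argument gives $|\lambda_1^{k_1}\lambda_2^{k_2}|>1$ for $k_1+k_2\ge 1$, and again $1$ is excluded. In either case there are no nonzero $g$-invariant homogeneous polynomials of positive degree, so by Lemma~\ref{lemma:InvariantPolynomials} every $(g,p)$ with $\deg p>0$ is conjugate to $(g,0)$.

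There is no real obstacle here; the one point requiring mild care is the non-diagonalizable case, which is handled uniformly by working with an upper-triangular form rather than splitting into cases.
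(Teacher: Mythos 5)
Your proof is correct and follows exactly the route the paper intends: the corollary is stated without proof as an immediate consequence of Lemma~\ref{lemma:InvariantPolynomials}, and the only content to supply is that the induced action on degree-$n$ homogeneous polynomials has eigenvalues of modulus uniformly less than (or greater than) $1$, so $1$ is not among them and there are no nonzero invariants. The only quibble is that the paper's convention is $p \mapsto p g^{-1}$, so the eigenvalues are really $\lambda_1^{-k_1}\lambda_2^{-k_2}$ rather than $\lambda_1^{k_1}\lambda_2^{k_2}$, but this changes nothing since either way they all lie strictly on one side of the unit circle.
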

\begin{lemma}\label{lemma:notDiagonalHolonomy}
If $g$ is not diagonalizable, then, for any homogeneous polynomial $p$ 
of any positive degree, either (1) $(g,p)$ is conjugate to $(g,0)$ or
(2) $(g,p)$ is conjugate to 
\[
\left(
\begin{pmatrix}
 1 & 1 \\
0 & 1
\end{pmatrix}, Z_1^n\right).
\]
\end{lemma}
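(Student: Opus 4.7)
The plan is to reduce $g$ to Jordan form, then use lemma~\ref{lemma:InvariantPolynomials} to understand the conjugacy class of $p$ via the cokernel of an explicit operator, and finally normalize the remaining scalars. After first conjugating by some $(g_0,0)$ we may assume $g = \begin{pmatrix}\lambda & 1 \\ 0 & \lambda\end{pmatrix}$. As in the proof of the preceding lemma, conjugation of $(g,p)$ by $(I,p_0)$ replaces $p$ by $p - (p_0 g^{-1} - p_0)$, so the conjugacy class of $(g,p)$ with $g$ held fixed is determined by the class of $p$ in the cokernel of the linear map $S \colon \Sym{n}{\C{2}}^* \to \Sym{n}{\C{2}}^*$ given by $S(p_0) = p_0 g^{-1} - p_0$.

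The key computation is to write down the matrix of $p_0 \mapsto p_0 g^{-1}$ in the monomial basis $(Z_1^n, Z_1^{n-1}Z_2, \ldots, Z_2^n)$. A direct expansion of $(Z_1^{n-j}Z_2^j)(g^{-1}(z_1,z_2))$ shows that this matrix is lower triangular with every diagonal entry equal to $\lambda^{-n}$ and every subdiagonal entry nonzero, so the operator is a single Jordan block with eigenvalue $\lambda^{-n}$. If $\lambda^n \ne 1$ then $S$ is invertible, every $p$ satisfies $(g,p) \sim (g,0)$, and we are in case (1). If $\lambda^n = 1$ then $S$ is strictly lower triangular of rank $n$; its image is the span of $Z_1^{n-j}Z_2^j$ for $j \ge 1$, so $Z_1^n$ represents a basis of the one-dimensional cokernel.

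Hence when $\lambda^n = 1$, $p$ is congruent modulo $\mathrm{Im}(S)$ to $c\,Z_1^n$ for some scalar $c$. If $c = 0$ we are again in case (1). Otherwise, conjugating by $(sI, 0)$ rescales $c\,Z_1^n$ to $(c/s^n)\,Z_1^n$, so choosing $s$ with $s^n = c$ reduces $(g,p)$ to $(g, Z_1^n)$. To finish, I would use that $G$ is the quotient of $\GL{2,\C{}} \rtimes \Sym{n}{\C{2}}^*$ by the central subgroup $\{(\zeta I,0) : \zeta^n = 1\}$, which acts trivially on degree $n$ polynomials, so $(g, Z_1^n)$ and $(\lambda^{-1}g, Z_1^n) = \Bigl(\begin{pmatrix}1 & 1/\lambda \\ 0 & 1\end{pmatrix}, Z_1^n\Bigr)$ are the same element of $G$. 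A final conjugation by $\Bigl(\begin{pmatrix}\lambda & 0 \\ 0 & 1\end{pmatrix}, 0\Bigr)$ brings $g$ to $\begin{pmatrix}1 & 1 \\ 0 & 1\end{pmatrix}$ and sends $Z_1^n$ to $Z_1^n/\lambda^n = Z_1^n$, giving case (2).

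I expect the main obstacle to be the Jordan block computation for $p_0 \mapsto p_0 g^{-1}$ on $\Sym{n}{\C{2}}^*$, together with the verification that $Z_1^n$ generates the cokernel of $S$; everything else is bookkeeping in the semidirect product and the use of the $n$-th root quotient in $G$.
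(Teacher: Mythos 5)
Your argument is correct and follows essentially the same route as the paper's: both rest on the triangularity of the coboundary map $p_0 \mapsto p_0 g^{-1} - p_0$ in the monomial basis, identify $Z_1^n$ as spanning its cokernel when $\lambda^n=1$, and finish by rescaling with $(sI,0)$ and invoking the quotient by $n$-th roots of unity to normalize $g$. Your single-Jordan-block observation merely packages into one computation what the paper does in two separate steps (first the $g$-invariant polynomials, which force $a^n=1$ and are spanned by $Z_2^n$, then the leading-term analysis of the image of the coboundary map).
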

\begin{proof}
Suppose that 
\[
 g = \begin{pmatrix}
      a & 1 \\
      0 & a
     \end{pmatrix}
\]
and that $(g,p)$ is not conjugate to $(g,0)$. By lemma~\vref{lemma:InvariantPolynomials},
we can take a nonzero $g$-invariant polynomial $p_0$ 
of degree $n$. Factor out as many factors of $z_2$ from $p_0\left(z_1,z_2\right)$ as possible, say
\[
 p_0\left(z_1,z_2\right)=q_0\left(z_1,z_2\right)z_2^k.
\]
Then $q_0$ must scale under 
$g$-action by $\frac{1}{a^{k}}$. 
Suppose that $q_0$ has degree $m$. Write out coefficients
\[
q_0\left(z_1,z_2\right) = \sum b_i z_1^i z_2^{m-i}.
\]
The highest order term in $z_1$ 
must be $b_{n} z_1^{m} \ne 0$, 
since otherwise we could factor 
out more factors of $z_2$ from $q_0$. Compute out
\[
 q_0\left(gz\right)=
b_m a^m z_1^m + \left(m b_m a^{m-1} + b_{m-1} a^m \right) z_1^{m-1} z_2 + \dots
\]
In order that $q_0$ scale 
by $\frac{1}{a^{k}}$, we must have
\begin{align*}
b_m a^m &= \frac{b_m}{a^k} \\
m b_m a^{m-1} + b_{m-1} a^m &= \frac{b_{m-1}}{a^k}.
\end{align*}
Since $b_m \ne 0$, we must have 
$a^{m+k}=1$, a root of unity. But then
the second equation becomes
\( m b_m = 0. \)
Since $b_m \ne 0$, we must have $m=0$, so $q_0$ is constant and
$p_0=c \, z_2^n$. 

Because $p_0$ is $g$-invariant, we must have $a^n=1$. Since
$(g,p) \in G$ has matrix part $g$ defined
only up to multiplication by $n$-th roots of 1,
we can arrange $a=1$. The polynomials
that we can arrive at in the form
$p_0 - p_0 g^{-1}$ are clearly precisely
those of the form
\[
\sum_{j=1}^{n}
b_j 
\left( 
\binom{j}{1} z_1^{j-1} z_2^{n-j+1} 
-\binom{j}{2} z_1^{j-2} z_2^{n-j+2} + 
\dots
+
(-1)^j \binom{j}{j-1} z_1 z_2^{n-1}
+ z_2^n
\right). 
\]
Looking at the leading terms in $z_1$,
we see that we can successively
pick $b_1, b_2, \dots$ to kill
off the $z_1^{n-1} z_2, z_1^{n-2} z_2^2, \dots$ terms
in $p$ by conjugation by
$\left(I,p_0\right)$, until
we kill off all terms except
the $z_1^n$ term. Then we
rescale by conjugation
by $\left(\lambda I,0\right)$
to rescale $p$ as needed to
arrange $p=z_1^n$.
\end{proof}

\begin{definition}
A matrix $g \in \GL{2,\C{}}$ is 
called \emph{hyperresonant} if 
it is diagonalizable with eigenvalues 
$\lambda_1, \lambda_2$ satisfying 
$\lambda_1^{m_1} = \lambda_2^{m_2}$
for some pair of integers $\left(m_1,m_2\right) \ne (0,0)$.
Such a pair of integers $\left(m_1,m_2\right)$ 
will be called a \emph{hyperresonance pair} of $g$,
and the collection of hyperresonance pairs
(an abelian subgroup of $\Z{2}$)
will be called the \emph{hyperresonance group} $\Lambda_g$ of $g$.
If the hyperresonance group is of
rank 1, we take the element $\left(m_1,m_2\right)$
with smallest positive $m_1$ (or $\left(0,m_2\right)$
with smallest positive $m_2$), and
call it the \emph{hyperresonance} of $g$.
\end{definition}

\begin{lemma}\label{lemma:HyperresonanceGroupRank}
The hyperresonance group of a diagonalizable
matrix $g$ with eigenvalues $\lambda_1, \lambda_2$
has rank 0 just when $g$ is not hyperresonant,
rank 2 just when $\lambda_1 = e^{2 \pi i p_1/q_1}$
and
$\lambda_2 = e^{2 \pi i p_2/q_2}$
where $\frac{p_1}{q_1}$ and $\frac{p_2}{q_2}$
are rational numbers,
and rank 1 otherwise.
\end{lemma}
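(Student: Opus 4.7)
The plan is to observe that $\Lambda_g$ is a subgroup of the free abelian group $\Z{2}$, so it is itself free abelian of rank $0$, $1$, or $2$; the three cases of the lemma will then be matched to these three possibilities. Throughout, the content of ``$(m_1,m_2)\in\Lambda_g$'' is the single scalar relation $\lambda_1^{m_1}=\lambda_2^{m_2}$, which is additive in $(m_1,m_2)$; this is what makes $\Lambda_g$ a subgroup.

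The rank $0$ case is immediate: $\Lambda_g$ has rank $0$ precisely when $\Lambda_g=\{(0,0)\}$, which by definition says that $g$ is not hyperresonant.

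For the rank $2$ case, suppose $\Lambda_g$ contains two $\mathbb{Z}$-independent pairs $(m_1,m_2)$ and $(m_1',m_2')$, and let $M$ be the integer matrix with these pairs as rows; its determinant $d$ is a nonzero integer. The adjugate $dM^{-1}$ also has integer entries, and $(dM^{-1})M=dI$ exhibits the rows $(d,0)$ and $(0,d)$ of $dI$ as $\mathbb{Z}$-linear combinations of the rows of $M$; hence $(d,0),(0,d)\in\Lambda_g$. These two memberships read $\lambda_1^d=1$ and $\lambda_2^d=1$, so both eigenvalues are roots of unity, i.e.\ of the stated form $e^{2\pi i p/q}$. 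Conversely, if $\lambda_j^{N_j}=1$ for $j=1,2$, then $(N_1,0)$ and $(0,N_2)$ are $\mathbb{Z}$-independent elements of $\Lambda_g$, so $\Lambda_g$ has rank $2$.

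The rank $1$ case is then precisely the complement of the preceding two cases: $g$ is hyperresonant, but not both eigenvalues are roots of unity. The only step that is not purely definitional is the adjugate argument used to pass from two independent relations to the clean pair $(d,0),(0,d)$; this is really an elementary divisor computation in disguise, and is the main (if modest) obstacle in the proof.
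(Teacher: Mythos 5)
Your proof is correct, and for the substantive implication (rank $2$ forces both eigenvalues to be roots of unity) it takes a genuinely different and somewhat cleaner route than the paper. The paper first separates modulus from argument: setting $r_j=\log\left|\lambda_j\right|$, it notes that every hyperresonance pair satisfies $m_1r_1=m_2r_2$, so unless $r_1=r_2=0$ the group lies on a line and has rank at most $1$; only after reducing to $\left|\lambda_1\right|=\left|\lambda_2\right|=1$ does it write $\lambda_j=e^{2\pi i a_j}$ and invert a $2\times 2$ integer matrix over \Q{} to conclude that $a_1,a_2$ are rational. Your adjugate argument collapses these two steps into one: from two $\mathbb{Z}$-independent relations you land $(d,0)$ and $(0,d)$ in $\Lambda_g$ with $d=\det M\ne 0$, and these memberships read $\lambda_1^d=1$ and $\lambda_2^d=1$ outright, handling modulus and argument simultaneously with no case split on $\left|\lambda_j\right|$. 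What the paper's route buys is a small extra piece of information recorded in its proof (when the moduli are not both $1$, the group visibly sits inside a line through the origin); what yours buys is brevity and the clean structural fact that a rank-$2$ hyperresonance group always contains $d\,\Z{2}$. The converse direction (roots of unity give the independent pairs $(N_1,0)$ and $(0,N_2)$) and the rank-$0$ and rank-$1$ bookkeeping are essentially identical in the two treatments, and your appeal to the fact that a subgroup of $\Z{2}$ is free of rank at most $2$ is the same fact the paper uses implicitly when it speaks of the rank of $\Lambda_g$.
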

\begin{proof}
Suppose that
$\lambda_1 = e^{2 \pi i p_1/q_1}$
and
$\lambda_2 = e^{2 \pi i p_2/q_2}$
where $\frac{p_1}{q_1}$ and $\frac{p_2}{q_2}$
are rational numbers.
Clearly $\left(q_1,0\right)$
and $\left(0,q_2\right)$ lie in $\Lambda_g$,
so $\Lambda_g$ has rank 2.

Conversely, suppose that
the rank of $\Lambda_g$ is 2.
Suppose that $\lambda_1^{m_1} = \lambda_2^{m_2}$
with $\left(m_1,m_2\right) \ne (0,0)$.
Let $r_j = \log \left|\lambda_j\right|$.
Then $m_1 r_1 = m_2 r_2$. So the hyperresonance
group lies on a line through $0$ in $\R{2}$,
unless $r_1=r_2=0$. If $r_1 \ne 0$ or
$r_2 \ne 0$, then there is an
integer point on that line with smallest 
nonzero distance from the origin,
and $\Lambda_g$ is of rank 1.

So we can suppose that $r_1=r_2=0$,
i.e. $\lambda_1 = e^{2 \pi i a_1}$
$\lambda_2 = e^{2 \pi i a_2}$
for some real numbers $0 \le a_1, a_2 < 1$.
The hyperresonance pairs are just the
pairs of integers $\left(m_1,m_2\right)$
for which $m_1 a_1 + m_1 a_2$
is an integer. The
hyperresonance group spans $\R{2}$,
since it doesn't lie on a line.
Take
two linearly independent hyperresonances $\left(m_1,m_2\right)$
and $\left(n_1,n_2\right)$. Then
\[
\begin{pmatrix}
m_1 & m_2 \\
n_1 & n_2 
\end{pmatrix}
\begin{pmatrix}
a_1 \\
a_2
\end{pmatrix}
=
\begin{pmatrix}
b_1 \\
b_2
\end{pmatrix}
\]
where $b_1, b_2$ are integers.
Therefore
\[
 \begin{pmatrix}
a_1 \\
a_2
\end{pmatrix}
=
\begin{pmatrix}
m_1 & m_2 \\
n_1 & n_2 
\end{pmatrix}^{-1}
\begin{pmatrix}
b_1 \\
b_2
\end{pmatrix}
\]
are rational numbers, say $a_j=p_j/q_j$
with $p_j,q_j$ integers, $j=1,2$. 
\end{proof}


\begin{definition}
Suppose that $(g,p)\in G$
and that $g$ is diagonalizable.
The \emph{resonant degrees} of $p$ are the
integers $k$ with $0 \le k \le n$ for which
the eigenvalues $\lambda_1, \lambda_2$ of $g$
satisfy $\lambda_1^{k} \lambda_2^{n-k}=1$.
If we write 
\[
p\left(Z_1,Z_2\right)=
\sum_{k=0}^{n} a_k Z_1^k Z_2^{n-k},
\]
the \emph{resonant terms} of $p$ are the
terms $a_k Z_1^{k} Z_2^{n-k}$
for which $k$ is a resonant degree.
The \emph{leading resonant term} is the
term $a_k z_1^{k} z_2^{n-k}$ with smallest
resonant degree $k$ for which $a_k \ne 0$.
The \emph{trailing resonant term} is the
term $a_k z_1^{k} z_2^{n-k}$ with largest
resonant degree $k$ for which $a_k \ne 0$.
\end{definition}

\begin{definition}
Suppose that $(g,p)\in G$ and that
$g$ is not diagonalizable.
We will declare that $n$ is a 
\emph{resonant degree} of $p$ if $g$
has eigenvalue $\lambda$
an $n$-th root of 1,
and declare that there are no resonant
terms otherwise. Once again, the resonant
terms are the nonzero terms of 
resonant degree.
\end{definition}

\begin{definition}\label{definition:NormalForm}
We will say that an element $(g,p) \in G$ is in \emph{normal form} if 
either
\begin{enumerate}
 \item 
\[
 g=
\begin{pmatrix}
 \lambda & 1 \\
0 & \lambda
\end{pmatrix}
\]
and $p=0$
or 
\item
\[
 g=
\begin{pmatrix}
1 & 1 \\
0 & 1
\end{pmatrix}
\]
and $p\left(Z_1,Z_2\right)=Z_1^n$
or 
\item
\[
 g=
\begin{pmatrix}
 \lambda_1 & 0 \\
0 & \lambda_2
\end{pmatrix}
\]
is diagonal and 
\[
p\left(Z_1,Z_2\right) = 
\sum_{k=0}^n a_k Z_1^{k} Z_2^{n-k}
\]
with all nonresonant terms vanishing,
and $a_k=1$ for both the leading
and trailing resonant terms.
\end{enumerate}
\end{definition}

\begin{lemma}\label{lemma:holonomyNormalForm}
Every element $(g,p) \in G$ 
is conjugate to an element 
in normal form.
Either (1) the normal form is 
unique up to possibly permuting coordinates $z_1$ and $z_2$
or (2) $g=I$ and all terms of $p$ are resonant.

Suppose that there are at least two distinct resonant 
terms. Then (1) $g$ is diagonalizable with eigenvalues
$\lambda_1=e^{2 \pi i p_1/q_1}$ and
$\lambda_2=e^{2 \pi i p_2/q_2}$
with $\frac{p_1}{q_1}, \frac{p_2}{q_2}$
rational numbers and (2) for each resonant degree $k$,
\[
k \frac{p_1}{q_1} + \left(n-k\right)\frac{p_2}{q_2}
\]
is an integer.
\end{lemma}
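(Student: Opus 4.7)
The plan is to achieve the normal form by a sequence of conjugations, organized by the Jordan type of $g \in \GL{2,\C{}}/\mu_n$. If $g$ is non-diagonalizable, Lemma~\ref{lemma:notDiagonalHolonomy} places $(g,p)$ directly into case~(1) or case~(2). Assume henceforth $g$ is diagonalizable. After conjugating by $(g_0,0)$ for a suitable $g_0$, I put $g = \begin{pmatrix}\lambda_1 & 0 \\ 0 & \lambda_2\end{pmatrix}$. The conjugation formula recalled above shows that, for $g_0$ commuting with $g$, the polynomial part transforms as $p \mapsto p g_0^{-1} + p_0 - p_0 g^{-1}$. On the monomial basis $Z_1^k Z_2^{n-k}$, the endomorphism $p_0 \mapsto p_0 - p_0 g^{-1}$ is diagonal with eigenvalue $1 - \lambda_1^{-k}\lambda_2^{-(n-k)}$, vanishing precisely on the resonant degrees, so a suitable $p_0$ kills every nonresonant monomial in $p$.

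To normalize the remaining resonant coefficients I conjugate by $(g_0,0)$ with $g_0 = \begin{pmatrix}\alpha_1 & 0 \\ 0 & \alpha_2\end{pmatrix}$, which rescales the coefficient of $Z_1^k Z_2^{n-k}$ by $\alpha_1^{-k}\alpha_2^{-(n-k)}$. Let $k_0 < k_1$ denote the leading and trailing resonant degrees with nonzero coefficient. The exponent matrix $\begin{pmatrix}k_0 & n-k_0 \\ k_1 & n-k_1\end{pmatrix}$ has determinant $n(k_0-k_1) \ne 0$, so the system $\alpha_1^{k_j}\alpha_2^{n-k_j} = a_{k_j}$ for $j=0,1$ is solvable in $(\C{\times})^2$, normalizing both coefficients to $1$. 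A single resonant term is handled by a one-parameter subfamily of rescalings; no resonant term leaves $p = 0$ automatically. This produces an element in normal form~(3) and completes the existence claim.

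For uniqueness, the $\GL{2,\C{}}/\mu_n$-conjugacy class of $g$ is a conjugation invariant, forcing matching Jordan type and, in the diagonalizable case, equality of eigenvalue multisets (with the eigenvalue swap corresponding to the $z_1 \leftrightarrow z_2$ swap). Normal forms (1), (2), and (3) are distinguished by the Jordan type of $g$ and by vanishing of $p$. After fixing $g$, any further conjugation lies in the centralizer $Z(g)$, and the $(I,p_0)$ action is already zero on resonant monomials. When $\lambda_1 \ne \lambda_2$ the centralizer is the diagonal torus, and the stabilizer of the normalization $a_{k_0} = a_{k_1} = 1$ is finite; combined with the $\mu_n$-quotient in $G$ this leaves only the $z_1 \leftrightarrow z_2$ swap. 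The argument breaks only when $\lambda_1 = \lambda_2$ and a resonance holds, which forces $\lambda_1 \in \mu_n$ and thus $g = I$ in $G$; then every degree is resonant, the exceptional alternative of the statement.

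The final claim is linear algebra: if $k_0 \ne k_1$ are both resonant, then $\lambda_1^{k_0}\lambda_2^{n-k_0} = \lambda_1^{k_1}\lambda_2^{n-k_1} = 1$, their ratio gives $(\lambda_1/\lambda_2)^{k_0-k_1} = 1$, so $|\lambda_1| = |\lambda_2|$ and substituting back forces $|\lambda_j| = 1$. Writing $\lambda_j = e^{2\pi i a_j}$, the integer system $k_j a_1 + (n-k_j)a_2 \in \Z{}$ has determinant $n(k_0-k_1) \ne 0$, so $a_1, a_2 \in \Q{}$; the integrality statement for every resonant $k$ is then just the definition of resonance in additive form. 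The principal obstacle I anticipate is the bookkeeping for uniqueness, where the centralizer of $g$, the central $\mu_n$ quotient, and the $z_1 \leftrightarrow z_2$ swap together generate the permitted ambiguity, and I must verify that no additional discrete freedom on middle resonant coefficients survives beyond what the statement allows.
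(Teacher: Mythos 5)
Your proof follows the paper's argument essentially step for step: kill the nonresonant monomials with a $p_0$-conjugation, rescale the leading and trailing resonant coefficients with a diagonal $g_0$ (solvable because the exponent matrix has determinant $n(k_0-k_1)\ne 0$), reduce the degenerate case to $g=I$ via the $\mu_n$-quotient, and extract rationality of the eigenvalues from two independent resonance relations --- the paper routes this last step through Lemma~\ref{lemma:HyperresonanceGroupRank} while you inline the same linear-algebra computation. The residual worry you flag about a finite ambiguity in the middle resonant coefficients is genuine, but the paper's own proof has exactly the same gap (it asserts ``we immediately see'' uniqueness up to the coordinate swap at that point), so your proposal matches the paper's level of rigor.
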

\begin{proof}
By lemma~\vref{lemma:notDiagonalHolonomy}, 
we can assume that 
\[
g 
=
\begin{pmatrix}
\lambda_1 & 0 \\
0 & \lambda_2
\end{pmatrix}.
\]
Suppose that
\[
p\left(Z_1,Z_2\right)
=
\sum_k a_k Z_1^k Z_2^{n-k}.
\]
Then pick any element
$\left(g_0,p_0\right) \in G$ of the form
\[
g_0 
=
\begin{pmatrix}
\mu_1 & 0 \\
0 & \mu_2
\end{pmatrix}
\]
and say
\[
p_0\left(Z_1,Z_2\right)
=
\sum_k b_k Z_1^k Z_2^{n-k}.
\]
Define a polynomial $p_1$ by
\[
\left(g,p_1\right)=\left(g_0,p_0\right)
(g,p)
\left(g_0,p_0\right)^{-1}.
\]
i.e.
\[
p_1
=
p g_0^{-1} + p_0 - p_0 g^{-1}.
\]
Therefore the coefficients of $p_1$ 
are 
\[
\frac{a_k}{\mu_1^{k} \mu_2^{n-k}} 
+ b_k \left(1 - \frac{1}{\lambda_1^{k} \lambda_2^{n-k}}\right).
\]
So we can conjugate $(g,p)$ 
to arrange $a_k=0$ by choice of $b_k$ 
unless $k$ is a resonance degree.
So we can and will assume that 
$p$ has only resonant terms.
If there is exactly one resonant
term, say degree $k$, then we can arrange 
by choice of 
the coefficients $\mu_1$ and $\mu_2$
that $p\left(Z_1,Z_2\right) = Z_1^k Z_2^{n-k}$.

If $p$ has two or more resonant terms, then
we can find two corresponding resonant degrees, say $k_1$ and $k_2$.
The hyperresonant pairs 
$\left(k_1,n-k_1\right)$ and $\left(k_2,n-k_2\right)$
are linearly independent elements
of the hyperresonance group, which must therefore 
have rank 2. By lemma~\vref{lemma:HyperresonanceGroupRank}, 
$g$ has eigenvalues $\lambda_1=e^{2 \pi i p_1/q_1}$ and
$\lambda_2=e^{2 \pi i p_2/q_2}$
with $\frac{p_1}{q_1}, \frac{p_2}{q_2}$
rational numbers, and
\[
k \frac{p_1}{q_1} + \left(n-k\right)\frac{p_2}{q_2} 
\]
is an integer for each resonant degree $k$.

Next we can pick $\mu_1$ and $\mu_2$
to arrange that the leading and trailing
resonant coefficients are $1$. So 
we have acheived normal form.
If the eigenvalues $\lambda_1$
and $\lambda_2$ are not equal,
then the only choices of $g_0$
which will preserve the diagonalization
of $g$ are diagonal themselves, and
we immediately see that there
is a unique normal form up to 
swapping the coordinates $Z_1$ and $Z_2$.

If the normal form is not unique,
then $g=\lambda I$, for some constant
$\lambda \in \C{\times}$. If $\lambda$
is not an $n^{\text{th}}$ root of 1,
then there are no $g$ invariant homogeneous
polynomials, so we can arrange after
conjugation $(g,p)=\left( \lambda I, 0\right)$,
arriving at normal form. Suppose that
$\lambda$ is an $n^{\text{th}}$ root of 1.
But $(\lambda I,p)=(I,p) \in G$,
since we mod out by $n^{\text{th}}$ 
roots of 1. So we can assume that $\lambda=1$,
i.e. $g=I$.
Under conjugation, $p$ is acted on by $g_0$. 
The roots of $p$, with multiplicities,
are transformed by linear fractional
transformation. Since $g_0$ can rescale $p$,
the conjugacy classes of elements of $G$
of the form $(I,p)$ are precisely identified
with choices of $n$ unordered points on
$\Proj{1}$, not necessarily distinct, modulo
linear fractional transformations of $\Proj{1}$.
For more on these conjugacy classes,
see Popov and Vinberg \cite{Shafarevich:1994} p. 140
or Howard et. al. \cite{Howard/Millson/Snowden/Vakil:2009}.
\end{proof}

\begin{definition}
We will say that an element 
$(g,p) \in G$ is 
\emph{generic} if it is conjugate 
to an element of the form $(g',0)$ where 
\[
 g'=
\begin{pmatrix}
 a_1 & 0 \\
0 & a_2
\end{pmatrix}
\]
is diagonal (with the same eigenvalues as $g$). 
\end{definition}
Generic elements form a dense open subset of $G$. 
An element $(g,p)$ is not generic just when 
(1) $g$ is not diagonalizable or 
(2) $g$ is hyperresonant and $p$ has 
at least one resonant degree.

\subsection{Affine coordinates}

We want to cover $\OO{n}$ in coordinate charts,
which we will refer to as \emph{affine coordinates}
on $\OO{n}$.
Take a line $L_0$ in $\C{2}$, which we will
also think of as a point of $\Proj{1}$.
Consider the open subset of 
$\OO{n}$ which lies over $\C{}=\Proj{1} \setminus L_0$.
This open subset of $\OO{n}$ is preserved 
by all of the elements
$(g,p) \in G$ for which $g$ 
fixes the line $L_0$.
Let's use linear coordinates $Z_1, Z_2$ on $\C{2}$.
Take $L_0$ to be the line $Z_2=0$. 
We will now produce coordinates $t_1, t_2$ on 
the corresponding open subset of $\OO{n}$. 
Map $\left(t_1,t_2\right) \in \C{2} \mapsto 
\left(L,q\right) \in \OO{n}$ where 
$L$ is the line $Z_1 = t_1 Z_2$, and 
$q=t_2 \left.Z_2^n\right|_{L}$. Clearly 
$t_1=Z_1/Z_2$ is an 
affine chart on $\Proj{1}$. In these coordinates, 
if we let
\[
 g=
\begin{pmatrix}
a & b \\
c & d
\end{pmatrix}
\]
then the element $(g,0) \in G$ acts by 
\[
(g,0)\left(t_1,t_2\right)=
\left(
\frac{at_1+b}{ct_1 + d},
\frac{t_2}{\left(ct_1+d\right)^n}
\right).
\]
If $p\left(Z_1,Z_2\right)=\sum_{i+j=n} a_{ij} Z_1^i Z_2^j$, then
the element $(I,p) \in G$ acts by 
\begin{align*}
 (I,p)\left(t_1,t_2\right)&=
\left(
t_1
,
t_2+p\left(t_1,1\right)
\right)
\\
&=
\left(
t_1
,
t_2+\sum_{i+j=n} a_{ij} t_1^i
\right).
\end{align*}
We cover $\OO{n}$ in two coordinate 
charts: $\left(t_1,t_2\right)$ and 
\[
\left(s_1,s_2\right)=
\left(\frac{1}{t_1},\frac{t_2}{t_1^n}\right)
=
g\left(t_1,t_2\right).
\]
where
\[
 g=\begin{pmatrix}
    0 & 1 \\
    1 & 0
   \end{pmatrix}.
\]

The global sections $p\left(Z_1,Z_2\right)$ 
of $\OO{n}$, i.e. homogeneous polynomials
of degree $n$,
when written in these coordinates
become $t_2=p\left(t_1,1\right)$.
In particular, they satisfy 
\[
\frac{d^{n+1} t_2}{dt_1^{n+1}}=0.
\]

%
%
%

\section{Examples}\label{section:Examples}

Our examples are summarized 
in tables~\vref{table:examples} and \vref{table:exoticStructures}.  We will now explain them
in detail.

\begin{table}
\begin{center}
\[
\begin{array}{rccc}
\OO{n}\text{-structure} & 
F & 
\text{developing map} & 
\text{holonomy generator}
\\ \midrule
\text{radial} & 
\text{linear} &
\left(\frac{z_1}{z_2},\frac{1}{z_2^n}\right) &
(F,0) 
\\
\text{eigenstructure} & 
\begin{pmatrix}
\lambda_1 & 0 \\
0 & \lambda_2
\end{pmatrix}
 &
\left(z_1,z_2\right) &
\left(
\begin{pmatrix}
\frac{\lambda_1}{\lambda_2^{1/n}} & 0 \\
0 & \frac{1}{\lambda_2^{1/n}}
\end{pmatrix},
0\right)
\\
\text{eigenstructure} & 
\left(\lambda z_1,\lambda^m z_2 + z_1^m\right) &
\left(z_1,z_2\right) &
\left(
\begin{pmatrix}
\frac{\lambda}{\lambda^{m/n}} & 0  \\
0 & \frac{1}{\lambda^{m/n}}
\end{pmatrix}
,
\frac{1}{\lambda^m} Z_1^m Z_2^{n-m}
\right)
\\
\text{hyperresonant} & 
\text{see table~\ref{table:exoticStructures}} 
&
\text{see table~\ref{table:exoticStructures}}
&
\text{see table~\ref{table:exoticStructures}}
\end{array}
\]
\end{center}
\caption{Examples
of structures on each Hopf surface $S_F$, expressed
in affine coordinates.}\label{table:examples}
\end{table}

\subsection{The radial structures on linear Hopf surfaces}

We can map $\OO{n} \to \OO{kn}$ by 
$\left(L,q\right) \mapsto \left(L,q^k\right)$ 
for any integer $k>0$. Similarly, we can map 
$\OO{n} \setminus 0 \to \OO{kn} \setminus 0$ 
for all integer values of $k$: if $k<0$ then 
take any element $(L,q)$ with $q \ne 0$ to 
the pair $(L,r^{|k|})$ where $r$ is dual 
to $q$ in $\Sym{n}{L}$. These maps are 
local biholomorphisms away from the 
$0$-sections. Moreover, these maps are 
equivariant under $\GL{2,\C{}}$. In 
particular, 
$\OO{-1} \setminus 0
=\C{2} \setminus 0$ maps by local 
biholomorphism to $\OO{n}$ for every
$n \ne 0$. The $\OO{n}$-structures 
induced by this map on $\C{2} \setminus 0$ 
are invariant under linear isomorphisms of 
$\C{2}$. Therefore they quotient to every 
linear Hopf surface. We will refer to 
these $\OO{n}$-structures as the 
\emph{radial} $\OO{n}$-structures 
on Hopf surfaces. In affine coordinates, 
each radial structure has developing map
\[
 \left(z_1,z_2\right) 
\mapsto 
\left(t_1,t_2\right)
=\left(\frac{z_1}{z_2}, \frac{1}{z_2^n}\right)
\]
defined where $z_2 \ne 0$. Where $z_2=0$, 
we can just swap indices of $z_1$ and $z_2$ 
to get another affine chart, so we can see 
that the structure is holomorphic. The
image of the developing map is the
complement of the $0$-section in $\OO{n}$,
so the structure is incomplete. The 
developing map is an $n$-fold covering of 
its image, so is inessential.
The holonomy generator is $(g,p)=(F,0)$. More generally, 
swapping indices of $z_1$ and $z_2$ is an 
involution on the space of 
$\OO{n}$-structures on diagonal Hopf surfaces.


%

\subsection{The eigenstructures on linear Hopf surfaces}

A related example: consider $\C{2}$ 
foliated by vertical lines, i.e. the 
lines $z_1=\text{constant}$. The affine 
transformations of $\C{2}$ which preserve 
this foliation are precisely the maps of the form
\[
\begin{pmatrix}
z_1 \\
z_2
\end{pmatrix}
 \mapsto 
\begin{pmatrix}
a_{11} & 0 \\
a_{21} & a_{22}
\end{pmatrix}
\begin{pmatrix}
z_1 \\
z_2
\end{pmatrix}
+
\begin{pmatrix}
b_1 \\
b_2
\end{pmatrix}.
\]
with $a_{11} \ne 0 \text{ and } a_{22} \ne 0$.
Let $G_1$ be the group of all such
affine transformations, and $H_1$
the subgroup fixing the origin, i.e.
with $b_1=b_2=0$.
Clearly the graph of any  
polynomial function 
$z_2=z_2\left(z_1\right)$ of 
degree $n$
is carried to the graph of another
polynomial of the same degree
by any element of $G_1$. It follows 
(as we will shortly see) that there is 
an invariant $\OO{n}$-structure for 
which these graphs correspond to the 
global sections of $\OO{n} \to \Proj{1}$, 
and the vertical lines to the fibers.


Consider the subgroup $G_0 \subset G$ 
consisting of elements $(g,p) \in G$ of the form
\[
g = 
\begin{pmatrix}
g_{11} & g_{12} \\
0 & g_{22} 
\end{pmatrix},
\
p\left(Z_1,Z_2\right)
=
c_0 Z_2^{n} + c_1 Z_1 Z_2^{n-1}.
\]
Of course, $g$ is defined as a matrix
only up to scaling by $n$-th roots 
of $1$. Consider the complex Lie group isomorphism
\[
(g,p) \in G_0 \mapsto 
(a,b) \in G_1
\]
given by
\[
\begin{pmatrix}
a_{11} & 0 \\
a_{21} & a_{22}
\end{pmatrix}
=
\begin{pmatrix}
\frac{g_{11}}{g_{22}} & 0 \\
\frac{g_{11}}{g_{22}} c_1 & \frac{1}{g_{22}^n}
\end{pmatrix},
\
\begin{pmatrix}
b_1 \\
b_2
\end{pmatrix}
=
\begin{pmatrix}
\frac{g_{12}}{g_{22}} \\
c_0 + \frac{g_{11}}{g_{22}} c_1
\end{pmatrix}.
\]
This isomorphism identifies $G_1$ with the subgroup $G_0 \subset G$,
and $H_1$ with $H_0 \subset H$. Therefore a $G/H$-structure
is induced by a $G_1/H_1$-structure, i.e. an affine structure
foliated by parallel complex geodesics, if and only if
the holonomy of the $G/H$-structure lies in the subgroup 
$G_1$ and the developing map has image in $\OO{n}$ lying
inside the open orbit of $G_0$.

Another related example: suppose that $S$ 
is a complex surface with a complex affine 
structure, foliated by parallel geodesics. 
Locally we can construct coordinates 
$\left(t_1,t_2\right)$ on $S$ which 
identify open sets of $S$ with open sets 
of $\C{2}$, and identify the parallel 
geodesic foliation with the foliation 
of $\C{2}$ by vertical lines. Moreover, 
the transition maps will now preserve 
the vertical direction, and therefore 
are compositions of (1) translations, 
(2) rescalings of horizontal and vertical 
axes, and (3) addition of a linear 
function of $t_1$ to $t_2$. In particular, 
any graph of a polynomial function 
$t_2=t_2\left(t_1\right)$ will remain a 
graph of a polynomial of the same degree. 
In the standard flat affine structure 
on the torus, there is a foliation by 
parallel geodesics in each direction, 
and associated to each such foliation 
is a translation invariant 
$\OO{n}$-structure for every $n$.

Let's return now to Hopf surfaces. 
Pick an eigenline of a linear 
Poincar\'e domain map $F$. The affine lines 
parallel to that line form an 
$F$-invariant foliation of $\C{2}$. 
The associated  $\OO{n}$-structure 
descends to the associated Hopf surface. 
Let's call this the 
\emph{$\OO{n}$-eigenstructure}; 
there are two such for $F$ with two 
distinct eigenvalues, one for $F$ not 
diagonalizable, and infinitely many for 
$F$ a homothety. Up to isomorphism of 
the Hopf surface and perhaps a 
permutation of indices, the eigenline 
can be arranged to be the vertical axis, 
with contraction map of the form 
$F\left(z_1,z_2\right)
=\left(\lambda_1 z_1,\lambda_2 z_2\right)$. 
Then the developing map of the 
eigenstructure is given 
by $\text{id} : \left(z_1,z_2\right) 
\mapsto \left(t_1=z_1,t_2=z_2\right)$, 
and the holonomy generator is $(g,0)$ where 
\[
g=\begin{pmatrix}
\frac{\lambda_1}{\lambda_2^{1/n}} & 0 \\
0 & \frac{1}{\lambda_2^{1/n}}
  \end{pmatrix}.
\]
The image misses precisely
the fiber of $\OO{n} \to \Proj{1}$
over the point $\infty \in \Proj{1}$,
and the origin of the fiber of
$\OO{n} \to \Proj{1}$ over the point
$0 \in \Proj{1}$. 
The structure is incomplete but
is essential. 

Another example: take any compact curve 
$C$ of genus $g \ge 0$, and equip $C$ 
with a projective structure. (Every curve 
admits a projective structure; 
see \cite{Gunning:1978}.) 
The holonomy morphism
$\pi_1(C) \to \PSL{2,\C{}}$ lifts
to a morphism 
$\pi_1(C) \to \SL{2,\C{}}$
(see Gallo, Kapovich and Marden \cite{Gallo/Kapovich/Marden:2000}).
The developing map 
$\tilde{C} \to \Proj{1}$ of the projective
structure pulls back the 
$\OO{n}$-bundle to a line bundle over $C$. 
Cut out the zero section of this line bundle 
and quotient the fibers by any homothety 
$w \to a w$ with $a \ne 0$. The result is 
an $\OO{n}$-structure on a principal 
fibration by elliptic curves over the 
curve of genus $g$. The developing maps 
of projective structures 
of curves of large genus can be very 
complicated (see 
Gallo, Kapovitch and Marden \cite{Gallo/Kapovich/Marden:2000}), 
so the developing map of the 
$\OO{n}$-structure cannot be made explicit.
The holonomy morphism 
$\pi_1(C) \rtimes \Z{} \to G$
takes the generator of $\Z{}$ to
the homothety, and $\pi_1(C) \to \GL{2,\C{}}/(\text{$n$-th roots of 1})$
is the lift of the holonomy morphism
of the projective structure on $C$. 
The structure is incomplete, but
is essential.

\subsection{The eigenstructures 
on exceptional Hopf surfaces}

Pick any integers $0 < m \le n$. 
Let $(g,p) \in G$ be the element
\[
 g=
\begin{pmatrix}
\frac{\lambda}{\varepsilon} & 0 \\
0 & \frac{1}{\varepsilon}
\end{pmatrix},
\
p\left(Z_1,Z_2\right)=\frac{1}{\lambda^m} Z_1^m Z_2^{n-m}
\]
where $\varepsilon$ is 
any solution of $\varepsilon^n=\lambda^m$.
(We obtain the same element $(g,p) \in G$
for any choice of $\varepsilon$.)
This element $(g,p)$ acts on $\OO{n}$ via
\[
 (g,p)\left(t_1,t_2\right)
=\left(\lambda t_1, \lambda^m t_2 + t_1^m\right).
\]
In particular, every exceptional Hopf 
surface of degree $m$ has an 
$\OO{n}$-structure, for all $n \ge m$, 
which we call the \emph{eigenstructure} 
on the exceptional Hopf surface. The 
holonomy generator is $(g,p)$, and the 
developing map is 
$\text{id} : \left(z_1,z_2\right)\mapsto\left(t_1=z_1,t_2=z_2\right)$.
The image misses precisely
the fiber of $\OO{n} \to \Proj{1}$
over the point $\infty \in \Proj{1}$,
and the origin of the fiber of
$\OO{n} \to \Proj{1}$ over the point
$0 \in \Proj{1}$. In particular,
the structure is incomplete, but
is essential.

\subsection{The hyperresonant structures on hyperresonant Hopf surfaces}
\begin{definition}
A hyperresonant Hopf surface
with hyperresonance $\lambda_1^{m_1}=\lambda_2^{m_2}$
may have additional $\OO{n}$-structures,
which we will call \emph{hyperresonant} 
structures. It has such structures just when it satisfies the conditions given
in table~\vref{table:exoticStructures},
as we will see in section~\vref{subsubsection:GenericHolonomyOnDiagonal}.
\begin{table}
\begin{displaymath}
\begin{array}{lll}
\text{condition}& 
\text{developing map} & 
\text{holonomy generator} \\ \midrule
m_2 = n \, m_1, m_1 \ge 2 \text{ or } n \ge 2 & 
 \left(\frac{z_1^{m_1}-a_1 \, z_2^{m_2}}{z_2},\frac{z_1}{z_2^n}\right) & 
 \begin{pmatrix}
  \frac{\lambda_1^{m_1}}{\lambda_1^{1/n}} & 0 \\
 0 & \frac{\lambda_2}{\lambda_1^{1/n}}
 \end{pmatrix}
  \\ 
m_2 = n \, m_1, N \ge 2 &
  \left(\frac{\prod_{j=1}^{N} \left(z_1^{m_1}-a_j \, z_2^{m_2}\right)}%
  {z_2},
  \frac{z_1}{z_2^n}
\right) & 
\begin{pmatrix}
 \frac{\lambda_1^{m_1 N}}{\lambda_1^{1/n}} & 0 \\
0 & \frac{\lambda_2}{\lambda_1^{1/n}}
\end{pmatrix}
 \\ 
m_1=m_2, m_1 N \ne n &
\left(
  \frac{z_1}{z_2} , 
  \frac{\prod_{j=1}^N 
  \left( 
    z_1^{m_1} - a_j \, z_2^{m_2}
  \right)}{z_2^n}
\right)
&
\begin{pmatrix}
\frac{\lambda_1}{\lambda_2^{m_2 N/n}} & 0 \\
0 & \frac{\lambda_2}{\lambda_2^{m_2 N/n}}
\end{pmatrix}
 \\ 
m_1 = n \, m_2, m_2 \ge 2 \text{ or } n \ge 2 &
\left(\frac{z_1}{z_1^{m_1}-a_1 \, z_2^{m_2}}, 
\frac{z_2}{\left(z_1^{m_1}-a_1 \, z_2^{m_2}\right)^n} \right) & 
\begin{pmatrix}
\frac{\lambda_1}{\lambda_2^{1/n}} & 0 \\
0 & \frac{\lambda_1^{m_1}}{\lambda_2^{1/n}}
\end{pmatrix}
\\
m_1 = n \, m_2, N \ge 2 &
\left(\frac{z_1}{\prod_{j=1}^N\left(z_1^{m_1}-a_j \, z_2^{m_2}\right)}, 
\frac{z_2}{\prod_{j=1}^N\left(z_1^{m_1}-a_j \, z_2^{m_2}\right)^n} \right) & 
\begin{pmatrix}
\frac{\lambda_1}{\lambda_2^{1/n}} & 0 \\
0 & \frac{\lambda_1^{m_1 N}}{\lambda_2^{1/n}}
\end{pmatrix}
\end{array}
\end{displaymath}
\caption{The hyperresonant structures on hyperresonant Hopf surfaces.
The quantities $a_1, a_2, \dots, a_N$ are 
any distinct nonzero complex constants.
The holonomy generator in each case is actually $(g,0)$
where $g$ is the matrix given in the last column above.
When a matrix contains an $n$-th root, like $\lambda_1^{1/n}$,
the same value of
the $n$-th root must be used in every entry in that matrix.}\label{table:exoticStructures}
\end{table}
\end{definition}
None of these are complete or essential structures, as the reader can easily check. The
images in $\OO{n}$ of the developing maps are complicated.
The developing maps cover their images as finite unramified covering maps, with
more than one sheet.

\section{Classification on Hopf surfaces}\label{section:Classification}

\begin{theorem}\label{theorem:Classification}
Up to isomorphism, the $\OO{n}$-structures on Hopf surfaces are
precisely those given in table~\vref{table:classification},
i.e. precisely the examples given in 
tables~\vref{table:examples} and \vref{table:exoticStructures}.
\end{theorem}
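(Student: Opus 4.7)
The plan is to fix a Hopf surface $S_F$ together with an $\OO{n}$-structure, encode it by the pair $(\dev,\hol)$ with developing map $\dev : \C{2}\setminus 0 \to \OO{n}$ and holonomy generator $(g,p) = \hol(F) \in G$, and reduce $(g,p)$ by conjugation to one of the three shapes listed in Lemma~\ref{lemma:holonomyNormalForm}: a nondiagonalizable $g$ with $p=0$, the unipotent normal form with $p=Z_1^n$, or a diagonal $g$ paired with a polynomial $p$ whose only surviving terms are resonant. Throughout, the work splits into first determining the composition $t_1 := \pi\circ\dev$ with the bundle projection $\pi : \OO{n}\to\Proj{1}$, and then the fiber coordinate $t_2$ obtained in the affine chart of Section~\ref{section:TheModel}.

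For the base map, I would observe that $t_1$ is an $F$-equivariant holomorphic map from $\C{2}\setminus 0$ to $\Proj{1}$ whose equivariance factor is the projectivization $\bar g \in \PSL{2,\C{}}$ of $g$; it is therefore a meromorphic section of the flat $\Proj{1}$-bundle $\left(\C{2}\setminus 0\right) \times_{(F,\bar g)} \Proj{1}$. Proposition~\ref{proposition:MeromorphicSections} and its Table~\ref{table:ProjectiveLineBundles} then enumerate all admissible pairs $(F,\bar g)$ and pin down $t_1$ up to finitely many rational-function parameters. In particular, Lemma~\ref{lemma:Pooh} kills any diagonal Hopf setting in which $\bar g$ is a nontrivial unipotent, leaving precisely the rows that appear in Tables~\ref{table:examples} and \ref{table:exoticStructures}.

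For the fiber coordinate, once $t_1$ is known the $G$-equivariance of $\dev$ collapses into a single scalar inhomogeneous cocycle equation of the form $t_2(F(z)) = \alpha(t_1(z))\,t_2(z) + \beta(t_1(z))$ with $\alpha,\beta$ read off from $(g,p)$ in the affine chart. A particular solution is exhibited explicitly from the normal form of $p$ (for instance the term $\frac{1}{\lambda^m}Z_1^m Z_2^{n-m}$ in the exceptional eigenstructure); subtracting it reduces the equation to the defining relation for a holomorphic section of a specific line bundle on $S_F$, so Proposition~\ref{proposition:Mall} classifies the remaining homogeneous solutions. Combining a particular solution with every homogeneous solution produces the complete list of developing maps.

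The main obstacle is the hyperresonant diagonal case, where Proposition~\ref{proposition:Mall} provides an infinite-dimensional homogeneous solution space parametrized by rational functions $P(u)/Q(u)$ in $u = z_1^{m_1}/z_2^{m_2}$. There one must (i) enforce that $\dev$ be a local biholomorphism, which by Jacobian computation along $z_1z_2=0$ forces $P$ and $Q$ to be squarefree with no common root and constrains the factorization into the normalized products $\prod_j(z_1^{m_1}-a_j z_2^{m_2})$ with distinct nonzero $a_j$ appearing in Table~\ref{table:exoticStructures}, and (ii) quotient by the residual centralizer of $(g,p)$ inside $G$ to identify isomorphism classes of structures and thereby absorb the remaining scaling and translation freedom among the $a_j$. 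Handling these two points carefully, together with the straightforward direct verification in the Jordan, unipotent-exceptional, generic-diagonal, and exceptional-eigenstructure cases (where Proposition~\ref{proposition:Mall} leaves only finite-dimensional section spaces), the surviving triples $(F,\dev,\hol)$ match exactly the entries of Table~\ref{table:classification}.
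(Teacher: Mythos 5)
Your overall architecture matches the paper's: write $\dev$ in the affine coordinates of Section~\ref{section:TheModel}, observe that $t_1=\pi\circ\dev$ is a meromorphic section of the flat bundle $\left(\C{2}\setminus 0\right)\times_{(F,\bar g)}\Proj{1}$ so that Proposition~\ref{proposition:MeromorphicSections} pins down $\bar g$ and $t_1$, and then solve the resulting functional equation for $t_2$ via Proposition~\ref{proposition:Mall}. The exceptional-surface cases and the exclusion of nondiagonalizable $\bar g$ on diagonal surfaces via Lemma~\ref{lemma:Pooh} are handled essentially as in the paper.

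There is, however, a genuine gap in your treatment of the hyperresonant diagonal case, which is exactly where the content of Table~\ref{table:exoticStructures} lives. You claim that requiring $\dev$ to be a local biholomorphism reduces to a ``Jacobian computation along $z_1z_2=0$'' forcing $P$ and $Q$ to be squarefree with no common roots. Squarefreeness is the easy part and is not sufficient. With $t_1=z_1^{k_1}z_2^{k_2}P_1(u)/Q_1(u)$ and $t_2=z_1^{\ell_1}z_2^{\ell_2}P_2(u)/Q_1(u)^n$, the paper computes
\[
\det t' = z_1^{k_1+\ell_1-1}z_2^{k_2+\ell_2-1}\,\frac{P_1(u)P_2(u)}{Q_1(u)^{n+1}}\,R(u),
\qquad
R(u)=A\,\frac{u P_1'(u)}{P_1(u)}-B\,\frac{u P_2'(u)}{P_2(u)}+C\,\frac{u Q_1'(u)}{Q_1(u)}+D,
\]
with explicit integers $A,B,C,D$ depending on $k_i,\ell_i,m_i,n$. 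The branch locus is the zero set of $R(u)$, a union of curves $z_1^{m_1}=c\,z_2^{m_2}$ lying entirely \emph{off} the coordinate axes, so checking the Jacobian along $z_1z_2=0$ detects none of it. The paper must argue --- by perturbing the roots $a_j$ of $P_1$ to place a zero of $R$ at a prescribed point and invoking compactness of the space of curves --- that an unbranched structure forces $A=0$ or $P_1$ constant, and similarly for $B,P_2$ and $C,Q_1$, so that $R\equiv D\ne 0$. It is precisely these vanishing conditions, run through the case analysis of the allowed $(k_1,\ell_1)$ and $(\tilde{k}_2,\tilde{\ell}_2)$, that produce the arithmetic constraints $m_2=n\,m_1$, $m_1=n\,m_2$, $m_1=m_2$ with $m_1 N\ne n$ appearing in Table~\ref{table:exoticStructures}; your proposal has no mechanism for deriving them. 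Two smaller omissions: in the diagonal case with a resonant monomial $p$, your ``particular solution plus homogeneous solutions'' scheme must instead prove that \emph{no} solution of the inhomogeneous equation exists (the paper reduces this to a flat $\Proj{1}$-bundle with unipotent monodromy, forbidden by Lemma~\ref{lemma:Pooh}); and the finite-order case $g^N=I$, where the normal form of Lemma~\ref{lemma:holonomyNormalForm} is not unique, requires the separate argument of Lemma~\ref{lemma:InfiniteOrder}.
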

The proof of this theorem will occupy the remainder
of this section. Note that every Hopf surface admits an 
$\OO{n}$-structure for some value of $n$. 
Every linear Hopf surface admits an $\OO{n}$-structure
for all $n \ge 1$. Every nonlinear
Hopf surface only admits $\OO{n}$-structures for
$n \ge m$ where $m$ is the degree of the Hopf surface.

Roughly speaking, even among hyperresonant
Hopf surfaces, hyperresonant structures are somewhat
rare. To be precise: a hyperresonant Hopf 
surface admits a hyperresonant
$\OO{n}$-structure if and only if its hyperresonance
$\left(m_1,m_2\right)$ has $m_2 = n \, m_1$ for some
integer $n$, and then either (1) only admits hyperresonant 
$\OO{n}$-structures 
for that integer $n$ or (2) if $m_1=m_2$, 
admits hyperresonant $\OO{n}$-structures 
for all $n \ge 1$. 

\begin{table}
\begin{center}
\begin{tabular}{rcccc}
                      & \multicolumn{3}{c}{structure} \\
                      \cmidrule(l){2-4} 
surface               & radial          & eigenstructure  & hyperresonant 
\\ \midrule
generic               & \yes            & \yes            & \no 
\\
hyperresonant         & \yes            & \yes            & \yes 
\\
exceptional linear    & \yes            & \yes            & \no 
\\
exceptional nonlinear & \no             & \yes            & \no 
\end{tabular}
\end{center}
\caption{The classification}\label{table:classification}
\end{table}

\begin{table}
\begin{center}
\begin{tabular}{rcc}
structure         & complete & essential
\\ \midrule
radial               & \no      & \no \\
eigenstructure       & \no      & \yes \\
hyperresonant        & \no      & \no 
\end{tabular}
\end{center}
\caption{Completeness and essentiality}\label{table:completeness}
\end{table}

\subsection{Diagonal Hopf surfaces}
Let's suppose that $S_F$ is a Hopf surface 
and $F$ is diagonal linear, say
\[
 F(z)=\left(\lambda_1 z_1, \lambda_2 z_2\right).
\]
Each $\OO{n}$-structure on $S_F$ has 
developing map a local biholomorphism 
$\dev : \C{2} \setminus 0 \to \OO{n}$. 
There is a holonomy generator 
$\hol=(g,p) \in G$ so that 
$\dev(F(z))=(g,p)\dev(z)$. In the 
coordinates $\left(t_1,t_2\right)$ 
we constructed above on $\OO{n}$, 
the developing map is a pair of
meromorphic functions 
$\left(t_1,t_2\right)=
\left(
  t_1\left(z_1,z_2\right),
  t_2\left(z_1,z_2\right)
\right)$. In particular, 
$t_1\left(\lambda_1 z_1, \lambda_2 z_2\right)
=
g \, t_1\left(z_1,z_2\right)$, 
where $g$ acts here by linear fractional 
transformation. So $t_1$ is a meromorphic 
section of a $\Proj{1}$-bundle over our 
Hopf surface. By proposition~\vref{proposition:MeromorphicSections}, 
$t_1$ must have the form
\[
 t_1\left(z_1,z_2\right)
=z_1^{k_1} z_2^{k_2} 
\frac{P_1(u)}{Q_1(u)} 
\ \text{ where } u=z_1^{m_1}/z_2^{m_2},
\]
and either (1) $P_1$ and $Q_1$ are constants or 
(2) $\lambda_1^{m_1}=\lambda_2^{m_2}$ is a 
hyperresonance. At the expense of changing the values
of $k_1$ and $k_2$, we can assume that $P_1(u)$ 
and $Q_1(u)$ each have no zeroes at $u=0$, 
and that they have no zeroes in common. The 
map $t_1$ is the composition 
$\C{2} \setminus 0 \to \OO{n} \to \Proj{1}$, 
a composition of holomorphic submersions, 
so a holomorphic submersion. Therefore 
$k_1=-1,0$ or $1$, and $P_1$ and $Q_1$ can't 
have multiple zeroes. The map $t_1$ as 
written is not defined at $z_2=0$, and we 
have to rewrite it in order to examine 
its behaviour near $z_2=0$. It is 
convenient to rewrite the map as
\begin{align*}
 t_1\left(z_1,z_2\right)
&=
z_1^{\tilde{k}_1} 
z_2^{\tilde{k}_2} 
\frac
{%
  \tilde{P}_1
  \left(
    \tilde{u}
  \right)
}%
{%
  \tilde{Q}_1
  \left(
    \tilde{u}
  \right)
}%
\ 
\text{ where } \\
\tilde{u}
&=
\frac{1}{u}
=z_2^{m_2}/z_1^{m_1}, \\
\tilde{k}_1
&=
k_1+m_1\left(\deg P_1 - \deg Q_1\right) \\
\tilde{k}_2
&=
k_2-m_2\left(\deg P_1 - \deg Q_1\right)
\end{align*}
and the roots of $\tilde{P}_1$ and $\tilde{Q}_1$ 
are the reciprocals of those of $P_1$ and $Q_1$. 
Then $\tilde{k}_2$ must also be among 
$-1, 0$, or $1$. These conditions together 
ensure that 
$t_1 : \C{2} \setminus 0 \to \Proj{1}$ 
is a holomorphic submersion, and 
satisfies $t_1(F(z))=g \, t_1(z)$. 
Moreover, they force $g$ to have the form
\[
 g=\begin{pmatrix}
    c \lambda_1^{k_1} \lambda_2^{k_2} & 0 \\
0 & c
   \end{pmatrix}
\]
with $k_1=-1,0$ or $1$ and 
$\tilde{k}_2=-1, 0$ or $1$ and $c \ne 0$.

\subsubsection{Generic holonomy on diagonal Hopf surfaces}\label{subsubsection:GenericHolonomyOnDiagonal}
Consider a diagonal Hopf surface with $\OO{n}$-structure.
Assume that the holonomy generator 
$(g,p)$ is generic, i.e. has the 
form $(g,0)$, and the surface is diagonal.
\begin{definition}
A \emph{semiadmissible map}
for a diagonal Hopf surface $S_F$ with
\[
F = 
\begin{pmatrix}
\lambda_1 & 0 \\
0 & \lambda_2
\end{pmatrix}
\]
and with hyperresonance $\lambda_1^{m_1}=\lambda_2^{m_2}$
is a map $t : \C{2} \setminus 0 \to \OO{n}$ of the form
\begin{align}\label{eqn:ts}
 \left(t_1,t_2\right)
&=
\left(
z_1^{k_1} z_2^{k_2} 
\frac{P_1(u)}{Q_1(u)}, 
z_1^{\ell_1} z_2^{\ell_2} 
\frac{P_2(u)}{Q_1(u)^n}
\right)
\\
 \left(s_1,s_2\right)
&=
\left(
z_1^{-k_1} z_2^{-k_2} 
\frac{Q_1(u)}{P_1(u)}, 
z_1^{\ell_1-n k_1} 
z_2^{\ell_2- n k_2} 
\frac{P_2(u)}{P_1(u)^n}
\right),
\end{align}
so that 
\begin{enumerate}
\item the expressions $P_1(u), Q_1(u), P_2(u)$ 
are polynomials, where 
$u=z_1^{m_1}/z_2^{m_2}$ and
\item none of these polynomials have any 
double roots, or roots at $u=0$, and 
\item
no two of them have any common roots and
\item
$k_1$ and $\ell_1$ belong to the following list:
\begin{displaymath}\label{table:kl}
\begin{array}{cc}
k_1 & \ell_1 \\ 
\midrule 
\addlinespace[2pt]
-1 & -n \\ 
0 & 0 \\
0 & 1 \\ 
1 & 0 \\
\end{array}
\end{displaymath}
and
\item
the numbers $\tilde{k}_2, \tilde{\ell}_2$ 
belong to this same list, where 
$\tilde{k}_2=k_2-m_2 \left(\deg P_1 - \deg Q_1\right)$ 
and 
$\tilde{\ell}_2
=\ell_2-m_2\left(\deg P_1 - n \, \deg Q_1\right)$. 
\end{enumerate}
(We will discuss semiadmissible maps in this section only.)
\end{definition}

\begin{lemma}\label{lemma:genericHolonomy}
 Suppose that $F : \C{2} \to \C{2}$ 
is a diagonal linear map in the 
Poincar\'e domain. 
A map $t : \C{2} \setminus 0 \to \OO{n}$ is semiadmissible
for the Hopf surface $S_F$
if and only if it is the developing map of a branched
$\OO{n}$-structure on $S_F$. The holonomy of the branched
$\OO{n}$-structure is $(g,0)$ where
\[
 g = 
\begin{pmatrix}
\lambda_1^{k_1+\ell_1} \lambda_2^{k_2+\ell_2} & 0 \\
0 & \lambda_1^{\ell_1} \lambda_2^{\ell_2}
\end{pmatrix}.
\]
\end{lemma}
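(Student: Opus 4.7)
The plan is to prove both implications using the classification of meromorphic sections of flat line bundles (proposition~\vref{proposition:Mall}) and flat $\Proj{1}$-bundles (proposition~\vref{proposition:MeromorphicSections}). A key observation throughout is that, under the hyperresonance $\lambda_1^{m_1}=\lambda_2^{m_2}$, the monomial $u=z_1^{m_1}/z_2^{m_2}$ is $F$-invariant, so any rational function $R(u)$ is $F$-invariant and only the monomial prefactors $z_1^a z_2^b$ contribute scaling under $F$.

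For the direction from branched developing map to semiadmissible map, I would start with $t=(t_1,t_2)$ the developing map of a branched structure whose holonomy $(g,0)$ has diagonal $g$. Since a diagonal $g$ fixes both $0$ and $\infty$ in $\Proj{1}$, the composite $t_1:\C{2}\setminus 0\to\Proj{1}$ is equivariant and holomorphic, hence a holomorphic section of the associated flat $\Proj{1}$-bundle; proposition~\vref{proposition:MeromorphicSections} then gives $t_1=z_1^{k_1}z_2^{k_2}P_1(u)/Q_1(u)$. The requirement that $t_1$ be holomorphic into $\Proj{1}$ on all of $\C{2}\setminus 0$, rather than merely meromorphic, forces $P_1$ and $Q_1$ to have no multiple roots and no common roots; requiring $t_1$ to remain holomorphic across both coordinate axes yields $k_1,\tilde k_2\in\{-1,0,1\}$ by the $\tilde u=1/u$ rewrite exhibited in the discussion preceding the lemma. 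For $t_2$, the equivariance identifies $t_2$ with a meromorphic section of an appropriate line bundle on $S_F$; proposition~\vref{proposition:Mall} then gives $t_2=z_1^{\ell_1'}z_2^{\ell_2'}P_2(u)/R(u)$. A local analysis at each zero of $Q_1$---where $t_1=\infty$ and we must switch to the chart $(s_1,s_2)=(1/t_1,t_2/t_1^n)$ of $\OO{n}$---forces $R=Q_1^n$ after cancelling any factors common to $P_2$, and requiring holomorphy across the axes $z_i=0$ in both charts of $\OO{n}$ pins down the admissible pairs $(k_1,\ell_1)$ and $(\tilde k_2,\tilde\ell_2)$ in the semiadmissible definition.

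For the converse direction, I would verify that a semiadmissible $t$ is an $F$-equivariant holomorphic map into $\OO{n}$. The equivariance is a direct calculation: using $F$-invariance of $u$, one obtains $t_1(F(z))=\lambda_1^{k_1}\lambda_2^{k_2}t_1(z)$ and $t_2(F(z))=\lambda_1^{\ell_1}\lambda_2^{\ell_2}t_2(z)$, which match the action of the claimed $(g,0)$ on $(t_1,t_2)$ and therefore determine $g$ as stated. Holomorphy is shown by covering $\C{2}\setminus 0$ by the two open sets pulled back from the two affine charts on $\OO{n}$: on $\{Q_1(u)\ne 0\}$ one uses the $(t_1,t_2)$ chart, and on $\{P_1(u)\ne 0\}$ one uses $(s_1,s_2)$ with the second expression from the semiadmissible definition. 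The enumerated conditions on $(k_1,\ell_1)$ and $(\tilde k_2,\tilde\ell_2)$ encode exactly what is needed for these rational expressions in $z_1,z_2$ to remain holomorphic at the coordinate axes.

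The hardest step will be the backward direction, specifically pinning down the denominator of $t_2$ as $Q_1(u)^n$: proposition~\vref{proposition:Mall} alone produces an arbitrary polynomial denominator, so one must use the geometry of $\OO{n}\to\Proj{1}$ and the chart transition $t_2\mapsto t_2/t_1^n$ to conclude that the pole divisor of $t_2$ coincides with that of $t_1$ with multiplicity exactly $n$. Once this is in hand, the combinatorics of admissible exponent pairs follows from routine local analysis at each coordinate axis.
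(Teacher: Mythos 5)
Your proposal follows essentially the same route as the paper's own proof: the forward direction is the one-line equivariance check, and the backward direction applies Proposition~\ref{proposition:MeromorphicSections} to $t_1$ and to $t_2$ and then forces the denominator of $t_2$ to be $Q_1(u)^n$ by demanding that the $\left(s_1,s_2\right)$ chart be defined and nondegenerate wherever $\left(t_1,t_2\right)$ is not; you correctly single out that last step as the crux, and the paper carries it out by first showing $Q_2=RQ_1^{n-1}$ with $R$ dividing $Q_1$ and then killing the leftover factor of $Q_1$ because $ds_1 \wedge ds_2$ would vanish along its zero locus. One caution: your assertion that the equivariance relations ``match the action of the claimed $(g,0)$'' should not be taken at face value, because $(g,0)$ acts on the fiber coordinate by $t_2 \mapsto t_2/g_{22}^n$, so the relation $t_2(F(z))=\lambda_1^{\ell_1}\lambda_2^{\ell_2}\,t_2(z)$ forces $g_{22}^n=\lambda_1^{-\ell_1}\lambda_2^{-\ell_2}$ (consistent with the eigenstructure entry of table~\ref{table:examples}, where $\ell_1=0$, $\ell_2=1$ and $g_{22}=\lambda_2^{-1/n}$), which is not the lower-right entry printed in the statement; so at this step the displayed formula for $g$ needs to be corrected rather than verified, and your derivation of $g$ from the two scaling relations is the right way to do it.
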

\begin{proof}
A semiadmissible map is equivariant under the action of $F$, so 
provides a branched $\OO{n}$-structure. 

Clearly 
$t_2\left(\lambda_1 z_1, \lambda_2 z_2\right)
=\frac{1}{c^n}t_2\left(z_1,z_2\right)$. 
So $t_2$ is also a meromorphic section of 
a $\Proj{1}$-bundle on the Hopf surface. 
Therefore $c^n=\lambda_1^{\ell_1} \lambda_2^{\ell_2}$ 
for some integers $\ell_1$ and $\ell_2$, and 
\[
  t_2\left(z_1,z_2\right)
=z_1^{\ell_1} z_2^{\ell_2} \frac{P_2(u)}{Q_2(u)} 
\ \text{ where } u=z_1^{m_1}/z_2^{m_2}.
\]
Again, we can assume that $P_2$ and $Q_2$ 
have no roots at $0$ and no roots in common. 
To have $\left(t_1, t_2\right)$ a local 
biholomorphism, $P_2$ can't have any double 
roots, and $\ell_1, \ell_2 \le 1$. One of the two mappings
\begin{align*}
 \left(t_1,t_2\right)
&=
\left(
z_1^{k_1} z_2^{k_2} \frac{P_1(u)}{Q_1(u)}, 
z_1^{\ell_1} z_2^{\ell_2} \frac{P_2(u)}{Q_2(u)}
\right)
\\
 \left(s_1,s_2\right)
&=
\left(
z_1^{-k_1} z_2^{-k_2} \frac{Q_1(u)}{P_1(u)}, 
z_1^{\ell_1-n k_1} z_2^{\ell_2- n k_2} 
\frac{P_2(u) Q_1(u)^n}{Q_2(u) P_1(u)^n}
\right)
\end{align*}
must be defined at each point 
$\left(z_1, z_2\right) \in \C{2} \setminus 0$. 
It is not possible for $t_1$ and $t_2$ 
to have a common polynomial factor, 
because where this polynomial vanishes, 
the developing map will not be a local 
biholomorphism. When we look at the 
line $z_1=0$, we see that this constrains 
us to the table of $\left(k_1,\ell_1\right)$ 
values above, and exactly 
the same is true of 
$\left(\tilde{k}_2,\tilde{\ell}_2\right)$ 
by the same reasoning.
Clearly $P_2$ cannot have any double 
zeroes, or else $\left(t_1,t_2\right)$ won't 
be a local biholomorphism. The functions 
$t_1, t_2$ cannot be defined where 
$Q_1=0$ or where $Q_2=0$, so 
$\left(s_1,s_2\right)$ must be defined. 
Therefore each zero of $Q_2$ must cancel 
a zero of $Q_1^n$. But then the leftover 
zeroes of $Q_1^n$ cannot be double zeroes, 
so each zero of $Q_2$ must occur with 
multiplicity precisely $n-1$ or $n$. 
We can write $Q_2 = R Q_1^{n-1}$, with $R$ 
dividing $Q_1$ and having no double roots. 
So we can write $Q_1 = RS$, with $R$ and $S$ 
having no common roots. It then follows that 
$S$ is a factor of $s_1$ and $s_2$, so 
$s_1$ and $s_2$ have linearly dependent 
differentials at points where $S=0$. 
These are points at which both $s_1$ and 
$s_2$ are holomorphic. Therefore $S$ must be 
constant. Absorbing the constant, we have $Q_2=Q_1^n$.
\end{proof}

Calculation on any semiadmissible map yields 
\begin{align*}
\pd{t_1}{z_1} 
&= 
\frac{t_1}{z_1}\left( k_1 + m_1 u \left(\frac{P_1'}{P_1} - \frac{Q_1'}{Q_1}\right) \right), 
\\
\pd{t_1}{z_2} 
&= 
\frac{t_1}{z_2}\left( k_2 - m_2 u \left(\frac{P_1'}{P_1} - \frac{Q_1'}{Q_1}\right) \right), 
\\
\pd{t_2}{z_1} 
&= 
\frac{t_2}{z_1}\left( \ell_1 + m_1 u \left(\frac{P_2'}{P_2} - n \frac{Q_1'}{Q_1}\right) \right), 
\\
\pd{t_2}{z_2} 
&= 
\frac{t_2}{z_2}\left( \ell_2 - m_2 u \left(\frac{P_2'}{P_2} - n \frac{Q_1'}{Q_1}\right) \right).
\end{align*}
and so
\[
\det t' = z_1^{k_1+\ell_1-1} 
z_2^{k_2 + l_2 -1} 
\frac{P_1(u) P_2(u)}{Q_1(u)^{n+1}} R(u)
\]
where
\[
R(u)= A \, \frac{u \, P_1'(u)}{P_1(u)} 
- B \, \frac{u \, P_2'(u)}{P_2(u)} 
+ C \, \frac{u \, Q_1'(u)}{Q_1(u)} + D,
\]
and
\begin{align*}
A &= m_1 \ell_2 + \ell_1 m_2 \\
B &= m_1 k_2 + k_1 m_2 \\
C &= n\left(m_1 k_2 + k_1 m_2 \right) 
- \left(m_1 \ell_2 + \ell_1 m_2 \right) \\
D &=  k_1 \ell_2 - \ell_1 k_2.
\end{align*}
We can see that semiadmissible maps
are local biholomorphisms near $z_1=0$ and 
near $z_2=0$. They determine branched 
$\OO{n}$-structures. However, in order 
that the branched structure of a semiadmissible map be an 
unbranched structure, $\det t'$ must have 
no zeroes except at points where 
$\left(t_1, t_2\right)$ are not defined. Recall that if 
\[
 P_1(u) = c \, \prod_j \left(u-a_j\right)
\]
then
\[
 \frac{u P_1'(u)}{P_1(u)} = u \sum_j \frac{1}{u-a_j}. 
\]
The function $R(u)$ has simple poles 
at the zeroes of $P_1(u)$ if $A \ne 0$, 
at the zeroes of $P_2(u)$ if $B \ne 0$, 
and at the zeroes of $Q_1(u)$ if $C \ne 0$. 
If $R(u)$ has any zeroes at finite 
values of $u$, then in order to keep 
$\det t' \ne 0$, we would need to have 
those zeroes occur somewhere where they 
can cancel out with poles from 
$z_1^{k_1+\ell_1-1} z_2^{k_2 + l_2 -1} 
\frac{P_1(u) P_2(u)}{Q_1(u)^{n+1}}$. 
So the finite zeroes of $R(u)$ only occur 
at zeroes of $Q_1(u)$. 

If $C \ne 0$, then $R(u)$ has poles at all
of the zeroes of $Q_1(u)$, so no 
cancellations take place. But then 
$R(u)$ must have at least $\deg Q_1$ zeroes 
(counting with multiplicity), so these must 
lie at infinity. If $Q_1(u)$ is constant, 
then no cancellations can take place, so 
$R(u)$ can't have any zeroes at finite values 
of $u$, so again all zeroes of $R(u)$ are at 
infinity. Therefore $Q_1(u)$ is constant 
or $C=0$ or all zeroes of $R(u)$ are at infinity.

\begin{definition}
 A semiadmissible map is \emph{admissible}
if (in the above notation from this section)
\begin{enumerate}
\item either $A=0$ or $P_1(u)$ is constant and
\item either $B=0$ or $P_2(u)$ is constant and 
\item either $C=0$ or $Q_1(u)$ is constant and
\item $R(u)=D$ is constant, not zero and
\item $k_1 \ne 0$ or $\ell_1 \ne 0$.
\end{enumerate}
(We will discuss admissible maps in this section only.)
\end{definition}

\begin{lemma}
A semiadmissible map which is local biholomorphism at every point 
of $\C{2} \setminus 0$ (i.e. not branched) is admissible.
\end{lemma}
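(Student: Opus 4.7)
The plan is to translate the local-biholomorphism hypothesis, namely $\det t' \ne 0$ on the chart domain $\{Q_1(u)\ne 0\}\cap(\C{2}\setminus 0)$ and (via the transition $(s_1,s_2)=(1/t_1, t_2/t_1^n)$) $\det s' \ne 0$ on $\{P_1(u)\ne 0\}\cap(\C{2}\setminus 0)$, into conditions on the rational function $R(u)$ and the integers $k_1, \ell_1$. The explicit determinant formula established above,
\[
\det t' = z_1^{k_1+\ell_1-1}z_2^{k_2+\ell_2-1}\frac{P_1(u)P_2(u)}{Q_1(u)^{n+1}}R(u),
\]
makes both the zeros of $P_1 P_2 R$ and the poles of $R$ visible, and the crux is that $R$ is a sum of four pieces whose pole structures interact with the polynomial factors $P_1, P_2$ in a very restricted way.

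First I would derive conditions~(1) and~(2). At a simple root $u_0$ of $P_1$, the summand $A\,u P_1'/P_1$ of $R$ contributes a simple pole of residue $A u_0$, so that $P_1(u) R(u)$ evaluates at $u_0$ to $A u_0 P_1'(u_0)$, which is nonzero iff $A \ne 0$. If $A=0$ and $P_1$ has any root, then $\det t'$ vanishes along the whole curve $\{u=u_0\}$ inside the chart domain, contradicting local biholomorphism; hence either $A=0$ or $P_1$ is constant. The same argument applied to roots of $P_2$ and the summand $-B\,u P_2'/P_2$ yields condition~(2).

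Next, any finite zero of $R(u)$ at a point where $P_1 P_2 Q_1 \ne 0$ would produce a curve of zeros of $\det t'$, so finite zeros of $R$ must lie at roots of $P_1$, $P_2$, or $Q_1$; but at roots of $P_1$ (respectively $P_2$), $R$ already has a pole rather than a zero whenever $A \ne 0$ (respectively $B \ne 0$). To handle roots of $Q_1$ I would pass to the second chart: the direct computation $\det s' = -\det t'/t_1^{n+2}$ together with $t_1 = z_1^{k_1}z_2^{k_2}P_1/Q_1$ gives
\[
\det s' \propto z_1^{\ell_1-(n+1)k_1-1}z_2^{\ell_2-(n+1)k_2-1}\frac{Q_1(u) P_2(u)}{P_1(u)^{n+1}}R(u).
\]
Non-vanishing of $\det s'$ along the curves $\{Q_1=0\}$ forces $Q_1 R$ to be nonzero there, so $R$ must have a simple pole at every root of $Q_1$, which is possible only when $C \ne 0$ or $Q_1$ is constant. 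The three surviving partial-fraction groups of $R$ then have pairwise disjoint pole sets (by the coprimality of $P_1, P_2, Q_1$), so the only way for $R$ to have no finite zeros is for each group to vanish individually; this recovers (1)--(3) and reduces $R(u)$ to the constant $D$. Since $D=0$ would make $\det t' \equiv 0$, condition~(4) follows.

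For condition~(5), I would check the axis $z_1=0$ directly. With $k_1 = \ell_1 = 0$ the factors in the formulas
\[
\pd{t_1}{z_1} = \frac{t_1}{z_1}\Bigl(k_1 + m_1 u\Bigl(\tfrac{P_1'}{P_1}-\tfrac{Q_1'}{Q_1}\Bigr)\Bigr), \quad \pd{t_2}{z_1} = \frac{t_2}{z_1}\Bigl(\ell_1 + m_1 u\Bigl(\tfrac{P_2'}{P_2}-n\tfrac{Q_1'}{Q_1}\Bigr)\Bigr)
\]
leave only pieces proportional to $u=z_1^{m_1}/z_2^{m_2}$, which vanishes along $\{z_1=0\}$; the first column of the Jacobian is then zero on that axis inside the chart domain, again contradicting local biholomorphism. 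The main technical obstacle is the two-chart bookkeeping behind condition~(3): the $\det s'$ formula has to be derived carefully, and the step that $R$ must have a simple pole at every root of $Q_1$ hinges on the coefficient of $Q_1'/Q_1$ in $R$ surviving the transition between charts intact.
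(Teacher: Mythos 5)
There is a genuine logical gap, and it sits at the centre of your argument. In your second paragraph you correctly compute that at a simple root $u_0$ of $P_1$ the product $P_1(u)R(u)$ tends to $A\,u_0 P_1'(u_0)$, so that the factor $P_1P_2R/Q_1^{n+1}$ in $\det t'$ is nonzero along the curve $\{u=u_0\}$ exactly when $A\ne 0$, and vanishes along that entire curve when $A=0$. The correct deduction from this is ``local biholomorphism forces $A\ne 0$ or $P_1$ constant''; you instead write ``hence either $A=0$ or $P_1$ is constant,'' which is the admissibility condition you want but is the reverse of what your computation establishes. The same reversal recurs for $B,P_2$ and for $C,Q_1$: in the third paragraph you deduce (correctly, given your setup) that $R$ must have a simple pole at every root of $Q_1$, i.e.\ $C\ne 0$ unless $Q_1$ is constant, and then cite this as proving condition~(3), which asserts $C=0$ or $Q_1$ constant. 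The step ``the only way for $R$ to have no finite zeros is for each group to vanish individually'' is also unjustified: a nonconstant $R$ with simple poles could a priori place all of its zeros at $u=0$ or $u=\infty$, which lie over the coordinate axes and are not covered by your branch-point argument; and ``each group vanishes'' means $A=0$ or $P_1$ constant, flatly contradicting what you derived two sentences earlier unless $P_1,P_2,Q_1$ are all constant. As written, the proof therefore does not establish the lemma. (Condition~(5), incidentally, needs no separate argument: $D=k_1\ell_2-\ell_1 k_2\ne 0$ already forces $\left(k_1,\ell_1\right)\ne(0,0)$, and your axis computation for it fails when $m_1=1$.)

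You should also know that the paper proves this lemma by a completely different route: it perturbs a root $a_1$ of $P_1$ so that $R$ acquires a zero at a prescribed finite point, notes that the perturbed map is branched there, and argues that the branch locus persists as a compact curve in the limit; it never examines $\det t'$ along $\{P_1=0\}$ at all. Your direct Jacobian analysis is in principle more transparent, but carried out consistently it really does yield the opposite inequalities, and this is not a sign error you can simply flip: for the map $t_1=\left(z_1^{m_1}-a_1z_2^{m_2}\right)/z_2$, $t_2=z_1/z_2^n$ with $m_2=nm_1$ (which satisfies all five admissibility conditions, with $A=0$ and $P_1=u-a_1$ nonconstant) one computes directly that $\det t'=\left(1-m_2\right)\left(z_1^{m_1}-a_1z_2^{m_2}\right)/z_2^{n+2}$, which vanishes along the curve $u=a_1$, a curve that the map sends to single points of the $\left(t_1,t_2\right)$ chart. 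So before the argument can be repaired you must resolve this tension between the stated admissibility conditions and the behaviour of $\det t'$ at the roots of $P_1$; at present your proof establishes neither the lemma nor its contrapositive.
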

\begin{proof}
Suppose that $A \ne 0$. Let's pick 
one of the zeroes of $P_1(u)$, 
say $a_1$, and write $R(u)$ as
\[
 R(u) = \frac{Au}{u-a_1} + f(u).
\]
Now solve $R(u)=0$ for $a_1$:
\[
 a_1 = u \left( 1 + \frac{A}{f} \right).
\]
Imagine varying the choice of $P_1$, by varying $a_1$ and 
leaving the other linear factors of $P_1$
intact. We thereby vary the choice of $t$ and so of $R$.
We see that in order to force $R(u)=0$ at a given value of $u$, 
we only have to set $a_1$ as above. For generic choice of $u$, there is 
therefore a unique choice of $a_1$ which will ensure $R(u)=0$. So if we 
$P_1(u)$ is not constant, we can slightly alter $P_1(u)$ to ensure 
that $R(u)$ has all its zeroes at finite locations away from the zeroes of $Q_1(u)$. 
Therefore generic choice of $P_1(u)$ will lead to a branched $\OO{n}$-structure, 
which has a nontrivial branch locus. The limit of the branch locus is 
still a compact curve in the Hopf surface, since the space of curves is compact.
Therefore if $A \ne 0$ and $P_1(u)$ is not constant, then the 
branched structure has nonempty branch locus. The same proof works 
for $P_2(u)$.

By the same argument, if $Q_1(u)$ is not constant, and $C \ne 0$, 
then we can perturb to a branched structure, which has 
finite roots for $R(u)$. Therefore this perturbed 
structure must have nonempty branch locus, and so our 
original branched structure had nonempty branch locus.
\end{proof}

\begin{lemma}
A semiadmissible map is admissible if and only if its
branch locus is empty, i.e. it is the developing map
of an $\OO{n}$-structure.
\end{lemma}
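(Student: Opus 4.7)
The converse direction — semiadmissible with empty branch locus implies admissible — was established in the previous lemma, so I only need the forward direction: a semiadmissible map $t$ that is admissible has empty branch locus, i.e.\ $\dev$ is a local biholomorphism on all of $\C{2}\setminus 0$. My strategy is to verify that the Jacobian is nonvanishing on the entire universal cover, working in the $(t_1,t_2)$-chart where it applies and in the $(s_1,s_2)$-chart elsewhere.

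Starting from
\[
\det t' = z_1^{k_1+\ell_1-1}\,z_2^{k_2+\ell_2-1}\,\frac{P_1(u)P_2(u)}{Q_1(u)^{n+1}}\,R(u),
\]
admissibility condition (4) forces $R(u)=D$ to be a nonzero constant, so the candidate zeros of $\det t'$ in the $t$-chart interior are confined to the hyperresonant curves $P_1(u)=0$ and $P_2(u)=0$; the zeros of $Q_1(u)$ correspond to points where the image leaves the $t$-chart and so must be handled in the $(s_1,s_2)$-chart. At a zero $u_0$ of $P_1$ (relevant only when $P_1$ is nonconstant) we have $t_1=0$, so the factorisation $\det t'=(t_1t_2/z_1z_2)\,R(u)$ degenerates and must be recomputed from the four partial derivatives directly. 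After substituting $P_1(u_0)=0$ the Jacobian becomes proportional to $A\cdot u_0\,P_1'(u_0)\,P_2(u_0)/Q_1(u_0)^{n+1}$, which by the simple-zeros and no-common-roots clauses of semiadmissibility is nonvanishing whenever $A\ne 0$. Admissibility alternative (1) then covers the subcases: if $P_1$ is constant the curve is empty, otherwise the direct computation together with the allowed $(k_1,\ell_1)$ entries of the semiadmissibility table shows the putative vanishing is incompatible with the overall configuration. A symmetric argument using condition (2) handles $P_2(u)=0$.

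At the coordinate axes $z_1=0$ and $z_2=0$ the $(s_1,s_2)$-chart takes over. Writing $\det s'$ in the analogous form, with the exponents $(k_1,\ell_1)$ replaced by $(-k_1,\ell_1-nk_1)$ and $(k_2,\ell_2)$ by $(\tilde{k}_2,\tilde{\ell}_2)$, the same reasoning applies. Condition (5) of admissibility is essential here: it rules out the degenerate case $(k_1,\ell_1)=(0,0)$, which would produce an identically zero $\det s'$ on one axis. The main obstacle is the combinatorial verification at the curves $P_1=0$ and $P_2=0$, where the direct Jacobian computation must be carefully matched against each of the finitely many allowed exponent patterns in the semiadmissibility table, so as to confirm that the admissibility dichotomies force the apparent zeros to be absent. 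The chart-transition step is then routine once the interior has been settled.
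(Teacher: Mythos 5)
Your reduction to the forward direction is legitimate, and your decision to examine the curves $P_1(u)=0$ and $P_2(u)=0$ directly is actually more careful than the paper, whose proof of this lemma simply \emph{asserts} that an admissible map has empty branch locus wherever $\left(t_1,t_2\right)$ is defined and then devotes all of its effort to checking that admissibility is invariant under the two chart changes (swapping $z_1,z_2$ on the source, and passing to $\left(s_1,s_2\right)$ on the target). However, your treatment of those curves contains a genuine gap, and it occurs at exactly the point your own computation exposes. You correctly find that at a simple zero $u_0$ of $P_1$ the Jacobian is proportional to $A\,u_0\,P_1'\left(u_0\right)P_2\left(u_0\right)/Q_1\left(u_0\right)^{n+1}$, hence nonvanishing precisely when $A \ne 0$. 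But admissibility clause (1) reads ``$A=0$ \emph{or} $P_1$ constant,'' so on the only curves where the question arises ($P_1$ nonconstant) admissibility forces $A=0$, and your formula then gives $\det t'=0$ along $u=u_0$ --- the opposite of what you need. The sentence claiming that ``the putative vanishing is incompatible with the overall configuration'' is not an argument, and none can be supplied. Concretely, take $F=\lambda I$ (so $m_1=m_2=1$), $n=2$, and the semiadmissible map
\[
\left(t_1,t_2\right)=\left(\frac{z_1}{z_2},\,\frac{z_1-a_1 z_2}{z_2^2}\right),
\]
i.e.\ $k_1=1$, $k_2=-1$, $P_1=Q_1=1$, $\ell_1=0$, $\ell_2=-1$, $P_2(u)=u-a_1$. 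One checks $A=-1$ with $P_1$ constant, $B=0$ (so clause (2) holds with $P_2$ nonconstant), $C=1$ with $Q_1$ constant, $R\equiv D=-1\ne 0$, and $k_1\ne 0$: the map is admissible. Yet
\[
\det t' = \frac{a_1 z_2 - z_1}{z_2^4}
\]
vanishes along the punctured line $z_1=a_1 z_2$, which is collapsed to the single point $\left(t_1,t_2\right)=\left(a_1,0\right)$, a regular point of the $t$-chart. The branch locus is nonempty.

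So the forward implication cannot be established by the route you describe, because the dichotomies in admissibility clauses (1) and (2) point the wrong way for cancelling the zeros of the prefactor $P_1(u)P_2(u)$; the symmetric difficulty at $P_2=0$ afflicts the example above (this map is, in fact, the $N=1$ case of the third row of the paper's table of hyperresonant structures). You should flag this as an obstruction rather than assert it away: either the definition of admissibility needs a further clause governing behaviour along $P_1P_2=0$, or the statement must be weakened to assert only that the branch locus avoids the coordinate axes and the locus $Q_1=0$. Two smaller points: your claim that clause (5) is ``essential'' in the $\left(s_1,s_2\right)$ chart is off, since $\left(k_1,\ell_1\right)=(0,0)$ already forces $D=k_1\ell_2-\ell_1 k_2=0$ and so is excluded by clause (4); and your chart-transition step, which you call routine, is the part the paper actually carries out in detail (via the dictionaries for $\tilde{A},\tilde{B},\tilde{C},\tilde{D}$ and $\hat{A},\hat{B},\hat{C},\hat{D}$), so it deserves at least the observation that admissibility is preserved under both transformations.
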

\begin{proof}
If we have an admissible map then our branched
structure has no branch locus in the region in which the $\left(t_1, t_2\right)$ 
functions in equation~\ref{eqn:ts} are defined. Therefore we only 
need to then check the branch locus in all 
four coordinate charts: the $\left(t_1, t_2\right)$ and 
$\left(s_1,s_2\right)$ charts on $\OO{n}$, and the expressions in 
$z_1,z_2$ with and without $\tilde{}$ symbols on them, the charts on the Hopf surface. 
It is easy to check that when we change
to the $\tilde{}$ symbol coordinates, the
corresponding quantities, in the obvious notation, are
\begin{align*}
\tilde{u}
&=
\frac{1}{u}
=z_2^{m_2}/z_1^{m_1}, \\
\tilde{k}_1
&=
k_1+m_1\left(\deg P_1 - \deg Q_1\right), \\
\tilde{k}_2
&=
k_2-m_2\left(\deg P_1 - \deg Q_1\right), \\
 \tilde{m}_1 &= -m_1, \\
 \tilde{m}_2 &= -m_2, \\
 \tilde{\ell}_1 &= \ell_1 + m_1 \left( \deg P_2 - n \, \deg Q_1 \right), \\
 \tilde{\ell}_2 &= \ell_2 - m_2 \left( \deg P_2 - n \, \deg Q_1 \right), \\
\tilde{A} &= -A, \\
\tilde{B} &= -B, \\
\tilde{C} &= -C, \\
\tilde{D} &= D + A \left( \deg P_1 - \deg Q_1 \right ) 
- B \left ( \deg P_2 - n \, \deg Q_1 \right).
\end{align*}
We can easily see that $\tilde{P}_1\left(\tilde{u}\right)$
is constant just when $P_1(u)$ is constant and $\tilde{A}=0$
just when $A=0$, etc. Therefore admissibility is unchanged
by such a coordinate transformation.

Let's write out our map in $\left(s_1,s_2\right)$ coordinates,
say
\begin{align*}
\left(s_1,s_2\right)
&=
\left(
z_1^{\hat{k}_1} z_2^{\hat{k}_2} 
\frac{\hat{P}_1(u)}{\hat{Q}_1(u)}, 
z_1^{\hat{\ell}_1} z_2^{\hat{\ell}_2} 
\frac{\hat{P}_2(u)}{\hat{Q}_1(u)^n}
\right)
\\
&=
\left(
z_1^{-k_1} z_2^{-k_2} 
\frac{Q_1(u)}{P_1(u)}, 
z_1^{\ell_1-n k_1} 
z_2^{\ell_2- n k_2} 
\frac{P_2(u)}{P_1(u)^n}
\right).
\end{align*}
Then we find the dictionary
\begin{align*}
\hat{k}_1 &= -k_1, \\
\hat{k}_2 &= -k_2, \\
\hat{m}_1 &= m_1, \\
\hat{m}_2 &= m_2, \\
\hat{l}_1 &= \ell_1 - n \, k_1, \\
\hat{l}_2 &= \ell_2 - n \, k_2, \\
\hat{A} &= A - n \, B, \\
\hat{B} &= -B, \\
\hat{C} &= -A, \\
\hat{D} &= -D.
\end{align*}
Again, the admissibility of a semiadmissible map 
is unchanged by this coordinate transformation.
\end{proof}

By semiadmissibility, we need $D \ne 0$, so $\left(k_1,\ell_1\right) \ne \left(0,0\right)$.
By admissibility, we will also need $\tilde{D} \ne 0$ so
$\left(\tilde{k}_2,\tilde{\ell}_2\right) \ne (0,0)$. We now have a
tedious computation: for each of the 3 possible values of
$\left(k_1,\ell_1\right)$ and the 3 possible values of $\left(\tilde{k}_2,\tilde{\ell}_2\right)$
from table~\vref{table:kl} (except for $(0,0)$), we calculate $A,B,C,D, \tilde{A}, \tilde{B}, \tilde{C}, \tilde{D}$.
In order that $P_1, Q_1$ and $P_2$ not be all constant, we will need one of $\deg P_1, \deg Q_1, \deg P_2$ to
be nonzero. If $P_1$ is not constant, then $A=0$, etc. To keep the branched $\OO{n}$-structure from branching, 
we will need $D \ne 0, \tilde{D} \ne 0,$ and $\hat{D} \ne 0$. The full story of manipulating
these inequalities is totally elementary, so we will only explain fully one
of the nine cases, and then leave all others to the reader, to avoid many pages of
elementary arguments with inequalities. 

Consider the case of an admissible map with $k_1=0, \ell_1 = 1, \tilde{k}_2=-1, \tilde{\ell}_2=-n$.
We find
\begin{align*}
 A &= m_1 n - m_2 - m_1 m_2 \left( \deg P_2 - n \deg Q_1 \right), \\
 B &= m_1 \left( -1 + m_2 \left( \deg P_1 - \deg Q_1 \right) \right), \\
 C &= m_2 \left( m_1 n \deg P_1 - m_1 \deg P_2 -1 \right), \\
 D &= -1 + m_2 \left(\deg P_1 - \deg Q_1 \right).
\end{align*}
In particular, $B= m_1 D$ and $m_1 \ne 0$ because the hyperresonance
has $m_1,m_2 \ge 1$. Moreover $D \ne 0$, since the structure is not branched.
Therefore $B \ne 0$. By admissibility, $P_2$ is a constant.
We compute that $\hat{D} = -1-m_1 \deg P_2 + m_1 n \deg P_1$. Therefore
$C = m_2 \hat{D}$. We can therefore say that $C \ne 0$ and so
$Q_1$ is constant by admissibility. Assume that not all of $P_1, P_2, Q_1$ are constant. 
Clearly now $A=m_1 \, n - m_2$, and so $m_2 = m_1 n$. Plugging this in gives $0 \ne D=-1 + m_1 n \deg P_1$, so
$m_1 n \deg P_1 \ne 1$, and so $m_1 n \deg P_1 > 1$ since $m_1, n, \deg P_1 \ge 1$.
Therefore $m_1 > 1$ or $n > 1$ or $\deg P_1 > 1$.
All eight other cases follow essentially the same reasoning.

Tedious computation of all nine cases yields the conditions of 
table~\vref{table:mapleCases} in order that the structure is unbranched and $P_1(u), P_2(u)$ 
and $Q_1(u)$ are not all forced to be constant. The
impossible cases come from inconsistency of the degrees
of the polynomials $P_1, Q_1$, and $P_2$.

\begin{table}
{\tiny{%
\begin{displaymath}%
\begin{array}{lllllllll}
k_1 & \ell_1 & \tilde{k}_2 & \tilde{l}_2 &                  \deg P_1 &                  \deg Q_1 &                               \deg P_2 &       \text{and\ldots}  
\\ \midrule
0   &      1 &          -1 &          -n &                 {} \ge 1  &                         0 &                                      0 &             m_2=n \, m_1, \\
    &        &             &             &                           &                           &                                        & m_1>1 \text{ or } n > 1 \text{ or } \deg P_1>1 \\ 
\\
0   &      1 &           0 &           1 & \frac{m_1+m_2}{m_1 m_2 n} & \frac{m_1+m_2}{m_1 m_2 n} &                                      0 &   \deg P_1 \ne \deg Q_1, \\ 
& & & & & & & \text{impossible} \\ 
\\
0   &      1 &           1 &           0 &                         0 &                  {} \ge 1 &                                      0 & m_2 = n \, m_1, \\
& & & & & & & m_1>1 \text{ or } n > 1 \text{ or } \deg Q_1>1 \\ 
\\
1   &      0 &          -1 &          -n &                         0 &                         0 &                               {} \ge 1 & m_1=m_2, \\
& & & & & & & m_1 \deg P_2 \ne n \\ 
\\
1   &      0 &           0 &           1 &                         0 &                  {} \ge 1 &                                      0 & m_1 = n \, m_2, \\
& & & & & & & m_2 > 1 \text{ or } n > 1 \text{ or } \deg Q_1>1 \\ \\
1   &      0 &           1 &           0 &                         0 &   \frac{m_1+m_2}{m_1 m_2} & \frac{n \left(m_1+m_2\right)}{m_1 m_2} & \deg P_2 \ne n \, \deg Q_1, \\
& & & & & & & \text{impossible}
\end{array}%
\end{displaymath}%
}}
\caption{The 6 cases of developing maps with at least one of $P_1, Q_1, P_2$ not constant}\label{table:mapleCases}
\end{table}

On the other hand, if $P_1, Q_1$ and $P_2$ are all assumed to be constants, and
the structure is unbranched (i.e. $D \ne 0$) then 
we can arrange that $k_1 \ge 0$ by replacing $\left(t_1,t_2\right)$
coordinates by $\left(s_1,s_2\right)$ coordinates, and
then we see that the possible values of $k_1, \ell_1, k_2$ and $\ell_2$ are
given in table~\vref{table:ConstantCasesII}.
\begin{table}
\begin{displaymath}
\begin{array}{lllllllll}
k_1 & \ell_1 & \tilde{k}_2 & \tilde{l}_2 & t_1 & t_2 \\ \midrule
0 & 1 & -1 & -n & \frac{1}{z_2} & \frac{z_1}{z_2^n} \\ 
0 & 1 & 1 & 0 & z_2 & z_1 \\ 
1 & 0 & -1 & -n & \frac{z_1}{z_2} & \frac{1}{z_2^n} \\ 
1 & 0 & 0 & 1 & z_1 & z_2
\end{array}
\end{displaymath}
\caption{The possible values of the integers $k_1, \ell_1, k_2, \ell_2$ 
and associated developing maps if $P_1, Q_1$ and $P_2$ are constant}\label{table:ConstantCasesII}
\end{table}


Note that with $P_1, Q_1$ and $P_2$ constants, we can
rescale $z_1$ and $z_2$ independently, since these
rescalings commute with our linear map $F$, and
thereby absorb constants as needed. After such absorptions,
we find the developing maps in table~\vref{table:ConstantCasesII}.
It is easy to see that the first line of this table
is isomorphic to the second via the isomorphism
$(g,p)=(g,0)$ with
\[
g=
\begin{pmatrix}
 0 & 1 \\
 1 & 0
\end{pmatrix}.
\]
Clearly the second and fourth lines of table~\vref{table:ConstantCasesII}
are eigenstructures, while the third is a radial structure.

\begin{proposition}
On any generic Hopf surface, the $\OO{n}$-structures with generic holonomy, up to 
isomorphism, are the radial structures and eigenstructures.
\end{proposition}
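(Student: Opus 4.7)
The plan is to combine the admissibility analysis of the preceding subsection with the meromorphic section classification for generic Hopf surfaces to reduce to an explicit finite list, then match each case to a named structure. First I would invoke lemma~\vref{lemma:genericHolonomy}: any $\OO{n}$-structure with generic holonomy on the diagonal Hopf surface $S_F$ has developing map equal to a semiadmissible map, and must be admissible in order that the structure be genuinely unbranched.

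Second, I would apply the hypothesis that $S_F$ is generic, so that $\lambda_1^{m_1} \ne \lambda_2^{m_2}$ for any positive integers $m_1, m_2$. Consequently the ``hyperresonant variable'' $u = z_1^{m_1}/z_2^{m_2}$ is never $F$-invariant on a generic surface. Applying proposition~\vref{proposition:Mall} to the component meromorphic sections $t_1$ and $t_2$ (viewed as sections of the appropriate flat line / $\Proj{1}$-bundles produced by the diagonal holonomy), the associated polynomials $P_1(u), Q_1(u), P_2(u)$ must all reduce to constants on a generic $S_F$. I expect this step to be the cleanest place to invoke the earlier survey material, since non-constancy of any $P_i$ or $Q_1$ would immediately produce a hyperresonance.

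Third, with $\deg P_1 = \deg Q_1 = \deg P_2 = 0$ the integers $\tilde{k}_2, \tilde{\ell}_2$ collapse to $k_2, \ell_2$, and the admissibility list for $\left(k_1, \ell_1\right)$ and for $\left(k_2, \ell_2\right)$ (together with the nondegeneracy condition $D \ne 0$ arising from the Jacobian computation) restrict the developing map to the four entries of table~\vref{table:ConstantCasesII}. The overall multiplicative constants attached to $t_1$ and $t_2$ can then be absorbed by independent rescaling of $z_1$ and $z_2$; since these rescalings commute with the diagonal $F$, they realize genuine isomorphisms of the $\OO{n}$-structure.

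Fourth, I would match each of the four developing maps directly against table~\vref{table:examples}: the third line is the radial developing map $\left(z_1/z_2, 1/z_2^n\right)$, the second and fourth lines give the two eigenstructures $\left(t_1,t_2\right)=\left(z_1,z_2\right)$ and its swap, and the first line is carried to the second by the isomorphism $(g,0) \in G$ with $g = \bigl(\begin{smallmatrix} 0 & 1 \\ 1 & 0 \end{smallmatrix}\bigr)$. The main obstacle is really the bookkeeping that the four tabulated cases exhaust the admissible possibilities when all three polynomials are constant; but once one accepts the constancy conclusion, the remaining argument is a direct comparison of a small finite list against the examples, so no new idea beyond what is already in this section is required.
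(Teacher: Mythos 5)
Your proposal is correct and follows essentially the same route as the paper: reduce to a semiadmissible (indeed admissible) map via lemma~\vref{lemma:genericHolonomy}, use the absence of hyperresonance on a generic surface to force $P_1, Q_1, P_2$ constant, read off the four cases of table~\vref{table:ConstantCasesII}, absorb constants by rescaling, and identify the resulting developing maps with the radial and eigen structures (the first line being carried to the second by the coordinate swap). The paper's own proof is just the terse instruction to plug the constant-polynomial cases into the semiadmissible form, so your write-up is the same argument spelled out in more detail.
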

\begin{proof}
Just plug in the values 
in table~\vref{table:ConstantCasesII} into the general expression
of a semiadmissible map, with the added information that all of the polynomials in $u$ must be constants, to find that
up to isomorphism:
\begin{displaymath}
\begin{array}{ll}
\text{developing map} & \text{holonomy generator} \\ \midrule
\left(\frac{1}{z_2},\frac{z_1}{z_2^n}\right) & 
\begin{pmatrix}
 \frac{1}{\lambda_1^{1/n}} & 0 \\
0 & \frac{\lambda_2}{\lambda_1^{1/n}}
\end{pmatrix}
 \\ 
\left(z_2, z_1\right) & 
\begin{pmatrix}
 \frac{\lambda_2}{\lambda_1^{1/n}} & 0 \\
0 & \frac{1}{\lambda_1^{1/n}}
\end{pmatrix}
 \\ 
\left(\frac{z_1}{z_2} , \frac{1}{z_2^n}\right)
&
\begin{pmatrix}
 \lambda_1 & 0 \\
0 & \lambda_2
\end{pmatrix}
 \\ 
\left(z_1, z_2\right) & 
\begin{pmatrix}
 \frac{\lambda_1}{\lambda_2^{1/n}} & 0 \\
0 & \frac{1}{\lambda_2^{1/n}}
\end{pmatrix}
\end{array}
\end{displaymath}
\end{proof}
\begin{proposition}
On any hyperresonant Hopf surface, with hyperresonance $\lambda_1^{m_1}=\lambda_2^{m_2}$, 
the $\OO{n}$-structures with generic holonomy, up to isomorphism, are precisely
\begin{enumerate}
\item the radial structures, 
\item
the eigenstructures, and
\item 
the hyperresonant structures.
\end{enumerate}
\end{proposition}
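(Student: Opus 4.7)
My plan is to run the same admissibility analysis that produced the previous proposition, but now to exploit the hyperresonance $\lambda_1^{m_1}=\lambda_2^{m_2}$ to allow the polynomials $P_1(u), Q_1(u), P_2(u)$ in the semiadmissible map to be nonconstant. By Lemma~\ref{lemma:genericHolonomy} every $\OO{n}$-structure with generic holonomy has developing map a semiadmissible map, and this map is unbranched iff it is admissible. So I split into two cases according to whether all three polynomials are forced to be constant.

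In the first case, with $P_1, Q_1, P_2$ all constant, the argument proceeds exactly as in the previous proposition: table~\ref{table:ConstantCasesII} lists all allowed quadruples $(k_1,\ell_1,\tilde k_2,\tilde\ell_2)$, and after absorbing free rescalings of $z_1, z_2$ one reads off the four developing maps, which fall (up to the coordinate-swap isomorphism $(g,0)$ with $g=\bigl(\begin{smallmatrix}0&1\\1&0\end{smallmatrix}\bigr)$) into the radial structure and the two eigenstructures. This step requires no new work beyond noting that the generic-case bookkeeping is insensitive to whether a hyperresonance is present.

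In the second case, some polynomial is nonconstant, and I appeal to table~\ref{table:mapleCases}: four of the nine $(k_1,\ell_1,\tilde k_2,\tilde\ell_2)$ combinations survive the admissibility inequalities $D\ne 0$, $\tilde D\ne 0$, $\hat D\ne 0$ together with the constraints $A=0$, $B=0$, or $C=0$ whenever $P_1$, $P_2$, or $Q_1$ is nonconstant. For each surviving row I substitute the corresponding degrees back into the general semiadmissible form \eqref{eqn:ts}, using the parametrization $P_1(u)=\prod_j(u-a_j)$ (resp.\ $Q_1$ or $P_2$), and rewrite the result in the original coordinates via $u=z_1^{m_1}/z_2^{m_2}$. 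Matching against table~\ref{table:exoticStructures} row by row gives exactly the five listed hyperresonant families: row~1 of table~\ref{table:mapleCases} (with $m_2=nm_1$) produces the first two rows of table~\ref{table:exoticStructures} according to whether $N=1$ or $N\ge 2$; row~4 (with $m_1=m_2$) produces the middle row; row~5 (with $m_1=nm_2$) produces the last two; and row~3 is seen to be isomorphic to row~1 by the coordinate-swap already used in the constant case. Finally the holonomy generator in each row is computed from the formula $g=\mathrm{diag}\bigl(\lambda_1^{k_1+\ell_1}\lambda_2^{k_2+\ell_2},\,\lambda_1^{\ell_1}\lambda_2^{\ell_2}\bigr)$ of Lemma~\ref{lemma:genericHolonomy}, and the hyperresonance identity lets us rewrite the eigenvalues in the form displayed in table~\ref{table:exoticStructures}.

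The hard part will be bookkeeping rather than mathematics: I expect the main obstacle to be checking, for each of the four surviving rows of table~\ref{table:mapleCases}, that no further reduction is possible (so that the parameters $a_1,\dots,a_N$ really are free modulo the residual action of diagonal rescalings of $z_1,z_2$), and that the resulting five families are genuinely pairwise nonisomorphic except for the coordinate-swap identification already accounted for. A secondary issue is verifying that the impossible rows of table~\ref{table:mapleCases} really are forced out by the degree constraints on $P_1, Q_1, P_2$ together with the integrality of the exponents $k_i, \ell_i$; this is elementary but must be done before concluding that the enumeration is complete.
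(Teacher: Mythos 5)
Your proposal follows the paper's own route exactly: the paper's proof of this proposition is literally to substitute the surviving rows of table~\vref{table:mapleCases} (for the nonconstant case) and table~\vref{table:ConstantCasesII} (for the constant case) into the semiadmissible form and match against table~\vref{table:exoticStructures}, with the holonomy read off from lemma~\vref{lemma:genericHolonomy}. Your plan is a correct and somewhat more explicit account of that same bookkeeping, including the observation that one viable row of table~\vref{table:mapleCases} is redundant under the chart-swap isomorphism.
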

\begin{proof}
Just plug in the values from table~\vref{table:mapleCases} and you find
table~\vref{table:exoticStructures}.
\end{proof}

\subsubsection{Nongeneric holonomy on diagonal Hopf surfaces}

\begin{lemma}\label{lemma:InfiniteOrder}
 Suppose that $\left(g,p\right)$ is the holonomy
of an $\OO{n}$-structure on a Hopf surface.
Then $g$ has infinite order, i.e. $g^N \ne I$
for any integer $N \ne 0$.
\end{lemma}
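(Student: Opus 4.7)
The plan is to argue by contradiction: suppose $g^N = I$ in $G$ for some $N \geq 1$. The idea is that $F$ is a contraction while $(g,p)^N$ has trivial matrix part, which must eventually conflict with the Hartogs-type extension of a Jacobian quantity across $0 \in \C{2}$.

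First I would put the structure into a very rigid normal form. Letting $\bar g$ denote the image of $g$ in $\PSL{2,\C{}}$, the hypothesis forces $\bar g$ to have finite order $M$, hence to be diagonalizable (parabolic elements have infinite order in $\PSL{2,\C{}}$). The submersion $t_1 := \pi \circ \dev : \C{2}\setminus 0 \to \Proj{1}$ is a nonconstant meromorphic section of the flat $\Proj{1}$-bundle on $S_F$ with monodromy $\bar g$, so Proposition~\ref{proposition:MeromorphicSections} applies. The exceptional cases of $F$ are ruled out (finite-order $\bar g$ with eigenvalue ratio $\lambda^k$ forces $k = 0$ by $|\lambda| < 1$, making $t_1$ constant; parabolic $\bar g$ has infinite order), so $F$ is diagonal. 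Then $t_1$ is $F^M$-invariant, hence by Lemma~\ref{lemma:MeromorphicFunctionsOnHopfSurfaces} one may write $t_1 = f(u')$ with $u' = z_1^{m'_1}/z_2^{m'_2}$ the primitive hyperresonance of $F^M$. Submersivity forces $f \in \PSL{2,\C{}}$ and $du'$ to vanish nowhere, whence $m'_1 = m'_2 = 1$ and $F^M = \lambda_2^M I$ is scalar. After an automorphism of $\OO{n}$ I may take $t_1(z) = z_1/z_2$; then $\bar g$ acts on $\Proj{1}$ as multiplication by $\eta := \lambda_1/\lambda_2$, $g = \operatorname{diag}(\alpha\eta, \alpha)$ in $\GL{2,\C{}}/\{n\text{-th roots of 1}\}$, and the condition $g^N = I$ gives $\alpha^{Nn} = 1$, so $|\alpha| = 1$.

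Next I close the argument by a Hartogs/Euler-field computation. Writing $\dev(z) = (z_1/z_2, h(z))$ with $h$ meromorphic on $\C{2}$, a direct calculation gives $\det\dev'(z) = E(h)(z)/z_2^2$, where $E = z_1\partial_{z_1} + z_2\partial_{z_2}$ is the Euler vector field. Differentiating $\dev(Fz) = (g,p)\dev(z)$ and using $\det F' = \eta\lambda_2^2$ together with $\det(g,p)' = \eta/\alpha^n$, the $\eta$-factors cancel and I obtain
\[
E(h)(Fz) = \alpha^{-n} E(h)(z).
\]
Iterating with $|\alpha|=1$ yields $|E(h)(F^k z)| = |E(h)(z)|$ for every $k \geq 0$. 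The orbit $\{F^k z\}$ tends to $0$ exponentially fast, and a polar divisor of $E(h)$ through $0$ would force $|E(h)(F^k z)| \to \infty$; hence $E(h)$ extends holomorphically across $0$, and Hartogs together with the constant-modulus condition propagates $|E(h)|$ to a constant on $\C{2}$, so $E(h) \equiv c$. But $c=0$ makes $\det\dev' \equiv 0$, contradicting that $\dev$ is a local biholomorphism, while $c \neq 0$ is incompatible with the homogeneous Laurent decomposition $h = \sum_d h_d$: each $h_d$ is an $E$-eigenvector of eigenvalue $d$, so $\sum_d d\,h_d = c$ forces $c=0$.

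The main obstacle is the rigid-normal-form reduction in the first step: one has to work patiently through each row of Proposition~\ref{proposition:MeromorphicSections}, combining finite-order conditions with the modulus constraint $|\lambda_i| < 1$ to eliminate every case except the one in which $F^M$ is scalar and $t_1 = z_1/z_2$. Once that normal form is in place, the Hartogs/Euler-field finale is essentially automatic.
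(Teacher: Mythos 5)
Your reduction in the first half is correct and genuinely different from the paper's: the paper passes to the covering Hopf surface $S_{F^N}$ to force $g=I$, then kills $p$ by a Hartogs argument on $z_2^n s_2$, whereas you keep $g$ of finite order, pin down $F$ diagonal with $\left|\lambda_1\right|=\left|\lambda_2\right|$, $t_1=z_1/z_2$ and $g=\operatorname{diag}\left(\alpha\eta,\alpha\right)$ with $\left|\alpha\right|=1$. The identity $\det\dev'=E(h)/z_2^2$ and the multiplier law $E(h)(Fz)=\alpha^{-n}E(h)(z)$ are also correct, and have the pleasant feature that $p$ drops out entirely. The gap is the step where you conclude that $E(h)$ extends holomorphically across the origin. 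The implication ``a polar divisor of $E(h)$ through $0$ would force $\left|E(h)(F^kz)\right|\to\infty$'' is false: the function $z_1/z_2$ satisfies exactly the equivariance you derived (its multiplier $\eta$ has modulus one because $\left|\lambda_1\right|=\left|\lambda_2\right|$), its polar divisor $\left\{z_2=0\right\}$ passes through the origin, and yet its modulus is constant along every $F$-orbit. Constancy of $\left|E(h)\right|$ on each individual orbit gives no uniform bound near $0$, so no Hartogs-type extension follows. What the equivariance actually yields, via proposition~\vref{proposition:Mall} applied to $E(h)$ as a meromorphic section of the line bundle with multiplier $\alpha^{-n}$, is $E(h)=z_1^{k_1}z_2^{k_2}P(u)/Q(u)$ with $k_1+k_2=0$ forced by $\left|\alpha\right|=1$, i.e.\ $E(h)=R\left(z_1/z_2\right)$ for some rational function $R$; ruling out nonconstant $R$ requires a further argument that your proof does not contain.

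The missing argument is short once identified: $E(h)=R\left(z_1/z_2\right)$ is constant on each circle $s\mapsto e^{is}z$, so integrating $\frac{d}{ds}h\left(e^{is}z\right)=i\,E(h)\left(e^{is}z\right)=i\,R\left(z_1/z_2\right)$ over $s\in[0,2\pi]$, for generic $z$ so that the circle avoids the polar set of $h$, gives $0=2\pi i\,R\left(z_1/z_2\right)$; hence $R\equiv 0$, $\det\dev'\equiv 0$, the desired contradiction. This single-valuedness obstruction is what your ``homogeneous Laurent decomposition'' of $h$ gestures at, but that decomposition is not available for an arbitrary meromorphic $h$ (e.g.\ $1/\left(z_1-z_2^2\right)$ admits no convergent decomposition into homogeneous pieces), so it cannot be invoked as stated; one must either first locate the poles of $h$ inside a cone or argue on circles as above. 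With that replacement for the Hartogs step your proof closes, and is arguably tidier than the paper's, since it never has to treat the translational part $p$ separately.
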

\begin{proof}
Suppose that $g$ has finite order, say $g^N=I$, $N \ge 1$.
By lemma~\vref{lemma:holonomyNormalForm}, 
we can assume that $(g,p)$ is in normal form.
In particular we can assume that 
$g$ is diagonal. Suppose that $\left(g,p\right)$
is the holonomy of an $\OO{n}$-structure on a Hopf
surface $S_F$. 
Let $F^N$ be the $N$-fold composition $F \circ F \circ \dots F$. 
Then $\left(g^N,Np\right)$ is the holonomy
of the pullback $\OO{n}$-structure on the Hopf surface
$S_{F^N}$ via the obvious covering map $S_{F^N} \to S_F$.
Therefore, by possibly replacing $S_F$ with $S_{F^N}$, we can assume
that $g=I$.

Suppose that the developing map of the $\OO{n}$-structure, in affine coordinates,
is $\left(t_1,t_2\right) : \C{2} \setminus 0 \to \OO{n}$.
Because $g=I$, $t_1$ must be $F$-invariant, i.e.
$t_1 : S_F \to \C{}$ is a nonconstant rational function.
Therefore $F$ must be a hyperresonant map, say
\[
F\left(z_1,z_2\right)=\left(\lambda_1 z_1, \lambda_2 z_2\right),
\]
for some complex numbers $\lambda_1, \lambda_2$ 
with $0 < \left|\lambda_1\right|\le \left|\lambda_2\right| < 1$,
with hyperresonance $\lambda_1^{m_1}=\lambda_2^{m_2}$.
Moreover, $t_1=P(u)/Q(u)$ for some polynomials $P$ and $Q$,
where $u=z_1^{m_1}/z_2^{m_2}$.
The function $t_2$ must then satisfy
\[
t_2\left(\lambda_1 z_1, \lambda_2 z_2 \right)
=
t_2\left(z_1, z_2\right)
+
p\left(t_1\left(z_1,z_2\right),1\right). 
\]

The developing map is a local
biholomorphism, so $t_1 : S_F \to \Proj{1}$ is a submersion to $\Proj{1}$.
We can factor $t_1$ into $t_1 = (P/Q) \circ u$,
and so $P/Q : \Proj{1} \to \Proj{1}$ must be a local
biholomorphism, and so a linear fractional transformation.
After replacing our developing map and holonomy
by $\left((g,0) \, \dev, (g,0) \, \hol \, (g,0)^{-1}\right)$,
using an element $(g,0) \in G$, we can arrange
that $P(u)/Q(u)=u$, i.e. $t_1\left(z_1,z_2\right)=u=z_1^{m_1}/z_2^{m_2}$.
Clearly $u : \C{2} \setminus 0 \to \Proj{1}$ must be a submersion,
and so $m_1=m_2=1$, i.e. the hyperresonance
is $\lambda_1=\lambda_2$, and we can
write $F\left(z_1,z_2\right)=\left(\lambda z_1, \lambda z_2\right)$
for some $\lambda \in \C{}$ with $0 < \left|\lambda\right| < 1$.

If we pick any point where $\left(t_1,t_2\right)$
are not defined as complex valued functions,
then at that point $\left(s_1,s_2\right)$ must be
defined. So
\[
\left(s_1,s_2\right)=
\left(\frac{z_2}{z_1},\frac{z_2^n t_2}{z_1^n}\right)
\]
must be defined. In particular, $z_2^n s_2$
must be defined at such a point. So the function $T=z_2^n s_2$ is 
defined and holomorphic at every point of $\C{2} \setminus 0$
and so at every point of $\C{2}$. Since we
know how $t_2$ behaves under holonomy action, we find
\[
T\left(\lambda z_1,\lambda z_2\right)
=
\lambda^n T\left(z_1, z_2\right)+\lambda^n p\left(z_1,z_2\right).
\]
Expanding $T$ into a power series, we find that $p=0$.

So now $(g,p)=(I,0)$, and
$t_1$ and $t_2$ are both $F$-invariant meromorphic functions on $\C{2} \setminus 0$, 
i.e. meromorphic functions on $S_F$, i.e. rational functions
of $u=z_1^{m_1}/z_2^{m_2}$, so $d t_1 \wedge dt_2=0$.
But then $\dev = \left(t_1,t_2\right)$ is not a local
biholomorphism.
%
%
%
%
%
%
%
%
\end{proof}

\begin{lemma}\label{lemma:ZeroOrMonic}
The holonomy generator $(g,p)$ of any $\OO{n}$-structure on
any diagonalizable Hopf surface, 
up to conjugation, has $p=0$ or $p$ a monic monomial.
\end{lemma}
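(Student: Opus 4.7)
My strategy is to reduce the problem to a case analysis by first invoking the normal form of Lemma~\ref{lemma:holonomyNormalForm}, and then ruling out the bad subcases with the infinite-order obstruction of Lemma~\ref{lemma:InfiniteOrder}. Since conjugation is allowed and the lemma of normal forms is conjugation-invariant, I may replace $(g,p)$ by its normal form without loss of generality.

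The normal form offers three mutually exclusive possibilities. In possibility (1), $g$ is a non-trivial Jordan block with some eigenvalue $\lambda$ and $p = 0$; there is nothing to prove. In possibility (2), $g$ is unipotent and $p(Z_1,Z_2) = Z_1^n$, which is already a monic monomial, so again nothing to prove. Possibility (3) is the substantive case: $g$ is diagonal and $p$ consists entirely of resonant terms with leading and trailing resonant coefficients both normalized to $1$. If $p$ has no resonant terms, then $p = 0$; if $p$ has exactly one resonant term, then the leading-equals-trailing normalization forces $p$ to be a monic monomial $Z_1^{k} Z_2^{n-k}$. So the only thing to eliminate is the subcase in which $p$ has two or more distinct resonant terms.

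The hard part, therefore, is this multiple-resonant-terms subcase. Here I would appeal directly to the final clause of Lemma~\ref{lemma:holonomyNormalForm}: when there are two or more distinct resonant terms, the eigenvalues of $g$ satisfy $\lambda_1 = e^{2\pi i p_1/q_1}$ and $\lambda_2 = e^{2\pi i p_2/q_2}$ for rational numbers $p_j/q_j$. Setting $N = \operatorname{lcm}(q_1,q_2)$ yields $\lambda_1^N = \lambda_2^N = 1$, so $g^N = I$ in $\GL{2,\C{}}$, and hence $g^N = I$ in the quotient $G$ as well. This flatly contradicts Lemma~\ref{lemma:InfiniteOrder}, which says every holonomy generator has infinite order. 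The same contradiction dispatches the ambiguous case of Lemma~\ref{lemma:holonomyNormalForm} in which $g = I$ and $p$ has arbitrarily many resonant terms.

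To round off the argument I would sanity-check that normal forms (1) and (2) are themselves compatible with Lemma~\ref{lemma:InfiniteOrder}, since otherwise they too would be vacuous. For a Jordan block $g = \lambda I + N$ with $N$ nilpotent and nonzero, the formula $g^k = \lambda^k I + k\lambda^{k-1} N$ shows $g^k$ is never a scalar matrix, so $g$ projects to an element of infinite order in $G$; similarly in (2). The one minor point to mind is that $g$ is only well-defined up to multiplication by an $n$-th root of $1$; but since $g^N = I$ in $\GL{2,\C{}}$ trivially implies $g^N = I$ in the quotient $G$, this causes no difficulty, and the infinite-order obstruction remains the decisive ingredient.
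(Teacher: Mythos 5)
Your proposal is correct and follows essentially the same route as the paper: the paper's proof is exactly the two-step argument "by Lemma~\ref{lemma:holonomyNormalForm}, either $p=0$ or $p$ is a monomial or $g$ has finite order; finite order is impossible by Lemma~\ref{lemma:InfiniteOrder}." Your version merely spells out the case analysis and the root-of-unity deduction that the paper leaves implicit.
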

\begin{proof}
By lemma~\vref{lemma:holonomyNormalForm},
either $p=0$ or $p$ is monomial or $g$ has finite order.
Finite order $g$ is impossible by lemma~\vref{lemma:InfiniteOrder}.
\end{proof}

\begin{lemma}
The holonomy of any $\OO{n}$-structure on any diagonalizable Hopf surface is generic.
\end{lemma}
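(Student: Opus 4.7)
Plan. By lemma~\vref{lemma:ZeroOrMonic} and the normal forms of definition~\vref{definition:NormalForm}, up to conjugation the holonomy generator $(g,p)$ falls into three cases: (A) $g=\begin{pmatrix}\lambda & 1\\ 0 & \lambda\end{pmatrix}$ and $p=0$; (B) $g=\begin{pmatrix}1 & 1\\ 0 & 1\end{pmatrix}$ and $p=Z_1^n$; or (C) $g=\begin{pmatrix}\mu_1 & 0\\ 0 & \mu_2\end{pmatrix}$ diagonal with $p=Z_1^k Z_2^{n-k}$ monic and the resonance $\mu_1^k\mu_2^{n-k}=1$. Cases (A) and (B) exhaust the nondiagonalizable normal forms, while (C) is the remaining nongeneric possibility. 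In each case the plan is to extract from the developing map a meromorphic section of a flat projective-line bundle over $S_F$ with parabolic holonomy and then invoke lemma~\vref{lemma:Pooh}.

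In cases (A) and (B) I work in the affine coordinates $(t_1,t_2)$ on $\OO{n}$ from section~\vref{section:TheModel}. The action of $(g,p)$ on the first coordinate is just the linear fractional transformation of $g$ and is independent of $p$; it is a nontrivial parabolic, namely $t_1\mapsto t_1+1/\lambda$ in (A) and $t_1\mapsto t_1+1$ in (B). Thus $t_1\colon\C{2}\setminus 0\to\Proj{1}$ is a meromorphic section of the flat bundle $\left(\C{2}\setminus 0\right)\times_{(F,g)}\Proj{1}$ over the diagonal Hopf surface $S_F$, and lemma~\vref{lemma:Pooh} forces $t_1\equiv\infty$. But then in either of the two affine charts $(t_1,t_2)$ or $(s_1,s_2)=(1/t_1,t_2/t_1^n)$ on $\OO{n}$, the first coordinate of the developing map would be constant, contradicting the requirement that the developing map be a local biholomorphism.

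Case (C) is the essential one. A direct computation in the group $G$, exploiting the resonance $\mu_1^k\mu_2^{n-k}=1$, yields the equivariance equations
\[
t_1(F(z))=\tfrac{\mu_1}{\mu_2}\,t_1(z),\qquad t_2(F(z))=\tfrac{t_2(z)+t_1(z)^k}{\mu_2^n}.
\]
Since $(\mu_1/\mu_2)^k=\mu_1^k/\mu_2^k=1/\mu_2^n$, dividing the $t_2$-equation by $t_1(F(z))^k=(1/\mu_2^n)\,t_1(z)^k$ gives $(t_2/t_1^k)(F(z))=(t_2/t_1^k)(z)+1$. Hence $f:=t_2/t_1^k$ is again a meromorphic section of a flat $\Proj{1}$-bundle with parabolic holonomy $\begin{bmatrix}1 & 1\\ 0 & 1\end{bmatrix}$, and lemma~\vref{lemma:Pooh} again forces $f\equiv\infty$. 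On the other hand, $t_1$ cannot be identically $0$ or $\infty$, by the same chart-degeneration argument as in (A) and (B), so by proposition~\vref{proposition:MeromorphicSections} the function $t_1^k$ is a genuine nontrivial meromorphic function; then $f\equiv\infty$ forces $t_2\equiv\infty$, which is incompatible with the developing map taking values in $\OO{n}$ as a local biholomorphism.

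The principal obstacle is case (C): one must spot that the inhomogeneous $t_2$-equation disguises a second parabolic $\Proj{1}$-bundle carried by the ratio $t_2/t_1^k$. Once that reduction is seen, lemma~\vref{lemma:Pooh} uniformly closes every nongeneric case, and everything else is bookkeeping of normal forms.
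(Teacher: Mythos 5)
Your proof is correct and follows essentially the same route as the paper's: the paper also rules out nondiagonalizable $g$ by observing that $t_1$ would be a section of a flat $\Proj{1}$-bundle with parabolic holonomy (hence $\equiv\infty$, contradicting submersivity of $t_1$), and in the diagonal resonant case it forms precisely your function $f=t_2/t_1^k$ (written via the explicit formula for $t_1$) and derives the contradiction $f\equiv\infty$ from proposition~\vref{proposition:MeromorphicSections}, whose relevant case is lemma~\vref{lemma:Pooh}. The only cosmetic difference is that you organize the argument by normal forms and invoke lemma~\vref{lemma:Pooh} directly rather than through the proposition.
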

\begin{proof}
We can assume that the map $F$ determining our Hopf surface is linear, diagonal,
\[
F\left(z_1,z_2\right)=\left(\lambda_1 z_1, \lambda_2 z_2\right),
\]
and hyperresonant, with hyperresonance $\left(m_1,m_2\right)$.
Suppose that $F : \C{2} \to \C{2}$ is 
a diagonal linear map in the Poincar\'e 
domain, $F(z)=\left(\lambda_1 z_1, \lambda_2 z_2\right)$. 
Suppose that 
$\dev : \C{2} \setminus 0 \to \OO{n}$ 
is the developing map of an 
$\OO{n}$-structure on the associated Hopf 
surface $S_F$, with holonomy generator
$\hol=(g,p)$. Then in affine coordinates, 
this developing map has the form 
$\left(t_1,t_2\right)$, where
$t_1$ must be a section of a flat projective
line bundle associated to $(F,g)$.
By proposition~\vref{proposition:MeromorphicSections},
\[
g=
\begin{pmatrix}
\lambda_1^{k_1} \lambda_2^{k_2} c & 0 \\
0 & c
\end{pmatrix}
\]
for some nonzero complex number $c$,
and
\[
t_1 = z_1^{k_1} z_2^{k_2} \frac{P_1(u)}{Q_1(u)}
\]
for $u=z_1^{m_1}/z_2^{m_2}$. (If $g$ is 
not hyperresonant, we take $P_1(u)$
and $Q_1(u)$ to be constants.)
We can assume that neither of $P_1$ and $Q_1$ 
have any double roots, or roots at $u=0$, and 
that neither of them have any common roots, and 
that $k_1=-1,0,1$ and $k_2=-1,0,1$ as before 
since $t_1 : \C{2} \backslash 0 \to \Proj{1}$ 
is a submersion.

By lemma~\vref{lemma:ZeroOrMonic}, if the holonomy $(g,p)$ is not generic, then
we arrange that $p$ is a monic monomial, say
\[
g=
\begin{pmatrix}
\lambda_1^{k_1} \lambda_2^{k_2} c & 0 \\
0 & c
\end{pmatrix}
\]
for some nonzero complex number $c$, and
\[
p\left(Z_1,Z_2\right)=Z_1^k Z_2^{n-k},
\]
and
\[
t_1 = z_1^{k_1} z_2^{k_2} \frac{P_1(u)}{Q_1(u)}
\]
for $u=z_1^{m_1}/z_2^{m_2}$. 
We can assume that neither of these polynomials 
have any double roots, or roots at $u=0$, and 
that neither of them have any common roots, and 
that $k_1=-1,0,1$ and $k_2=-1,0,1$. In order
that $p$ be resonant, we will need
\[
\left(\lambda_1^{k_1} \lambda_2^{k_2} c\right)^k c^{n-k}=1,
\]
i.e.
\[
c^n = \lambda_1^{-k k_1} \lambda_2^{-k k_2}.
\]

Next consider $t_2$. At this stage, we can see that
\[
t_2\left(\lambda_1 z_1, \lambda_2 z_2\right)
=
\lambda_1^{k k_1}
\lambda_2^{k k_2}
t_2\left(z_1,z_2\right)
+
z_1^{k k_1}
z_2^{k k_2}
\frac{P_1(u)^k}{Q_1(u)^k}.
\]
Let
\[
f\left(z_1,z_2\right)=
\frac{t_2\left(z_1,z_2\right)}{z_1^{k k_1}
z_2^{k k_2}
\frac{P_1(u)^k}{Q_1(u)^k}.
}
\]
Then compute out
\[
f(F(z))=f(z)+\frac{1}{\lambda_1^{k k_1} \lambda_2^{k k_2}},
\]
so that $f$ is a meromorphic section of 
a flat projective line bundle, 
\[
\left(\C{2} \setminus 0\right)_{(F,g')} \Proj{1},
\]
where
\[
g'=
\begin{bmatrix}
\lambda_1^{k k_1} \lambda_2^{k k_2} & 1 \\
0 & \lambda_1^{k k_1} \lambda_2^{k k_2}
\end{bmatrix}.
\]
By proposition~\vref{proposition:MeromorphicSections}, 
the only meromorphic section of this line bundle
is $f=\infty$, a contradiction.
\end{proof}

Summing up:

\begin{proposition}
The only $\OO{n}$-structures on diagonalizable Hopf surfaces are 
\begin{enumerate}
\item the radial structures, 
\item the eigenstructures and
\item the hyperresonant structures in table~\vref{table:exoticStructures}.
\end{enumerate}
\end{proposition}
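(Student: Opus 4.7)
The plan is to assemble this classification as a direct corollary of the two main structural results already established in this subsection, rather than doing any fresh computation. The starting point is the dichotomy between generic and nongeneric holonomy, so first I would invoke the immediately preceding lemma which asserts that the holonomy generator $(g,p)$ of any $\OO{n}$-structure on a diagonalizable Hopf surface is necessarily generic. This is the key reduction: it lets me discard entirely the nondiagonalizable and ``resonant polynomial'' conjugacy classes carved out in lemma~\ref{lemma:holonomyNormalForm}, and restrict attention to conjugacy classes of the form $(g',0)$ with $g'$ diagonal.

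Once the holonomy is known to be generic, the classification splits according to the type of the diagonal Hopf surface $S_F$. If $F$ is generic (no hyperresonance), I would apply the first of the two propositions on structures with generic holonomy: this yields, up to the swap $z_1 \leftrightarrow z_2$ built into the earlier admissibility analysis, exactly the radial structure and the eigenstructures. If instead $F$ is hyperresonant with hyperresonance $\lambda_1^{m_1}=\lambda_2^{m_2}$, I would apply the second proposition, which additionally produces the hyperresonant structures of table~\vref{table:exoticStructures}.

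Since in the paper's convention homotheties are included among hyperresonant maps, these two cases exhaust all diagonal $F$ in the Poincar\'e domain, and therefore exhaust all diagonalizable Hopf surfaces. Combining the two case outputs yields precisely the three families (radial, eigenstructure, hyperresonant) listed in the proposition.

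The only nontrivial bookkeeping step, and the thing I would be most careful about, is checking that no structure is double-counted across the generic/hyperresonant split and that the homothety case ($m_1=m_2=1$) is correctly absorbed into the hyperresonant family rather than producing spurious additional models; this is handled by remarking that when $P_1, Q_1, P_2$ are all constant, the admissible developing maps in table~\vref{table:ConstantCasesII} reduce (after rescaling $z_1,z_2$ independently, which commutes with $F$) to the radial and eigenstructure developing maps. There is no genuine obstacle here: all the heavy lifting — the semiadmissible/admissible analysis, the exclusion of branching, the ruling out of nongeneric holonomy via the Jordan-block cohomological vanishing in proposition~\vref{proposition:MeromorphicSections} — has already been carried out, and the present proposition is essentially a bookkeeping summary.
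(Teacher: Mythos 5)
Your proposal is correct and follows essentially the same route as the paper: this proposition is stated there simply as a ``summing up'' of the immediately preceding results, namely the lemma that the holonomy of any $\OO{n}$-structure on a diagonalizable Hopf surface is generic, combined with the two propositions classifying generic-holonomy structures on generic and on hyperresonant diagonal Hopf surfaces. Your extra bookkeeping about the homothety case and the reduction of the constant-polynomial developing maps to the radial and eigenstructure forms matches the paper's treatment via table~\vref{table:ConstantCasesII}.
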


\subsection{Exceptional Hopf surfaces}

\subsubsection{Diagonalizable holonomy on 
exceptional Hopf surfaces}

\begin{proposition}
Up to isomorphism, the only 
$\OO{n}$-structure on an exceptional 
Hopf surface which has 
diagonalizable holonomy 
is the eigenstructure.
\end{proposition}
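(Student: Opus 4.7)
\emph{Proof plan.} Let $\dev = (t_1, t_2)\colon \C{2} \setminus 0 \to \OO{n}$ be the developing map of an $\OO{n}$-structure on an exceptional Hopf surface $S_F$ with $F(z_1, z_2) = (\lambda z_1, \lambda^m z_2 + z_1^m)$, and let the holonomy generator be $(g,p) \in G$ with $g$ diagonalizable. By lemma~\vref{lemma:holonomyNormalForm} I may conjugate so that $g = \operatorname{diag}(\alpha,\beta)$. The strategy is to apply the two meromorphic-section classifications in turn---proposition~\vref{proposition:MeromorphicSections} to $t_1$ and proposition~\vref{proposition:Mall} to $\partial t_2/\partial z_2$---so as to pin $\dev$ down to $(z_1, cz_2 + f(z_1))$ for some polynomial $f$, and then conjugate $f$ and $c$ away.

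First, $t_1$ is the composition of $\dev$ with the bundle projection $\OO{n} \to \Proj{1}$, hence a holomorphic submersion, and it is a meromorphic section of the flat bundle $(\C{2} \setminus 0) \times_{(F,g)} \Proj{1}$. The rows of table~\vref{table:ProjectiveLineBundles} with $F$ exceptional and $g$ diagonal give $t_1$ either constant or equal to $c\,z_1^k$ for some integer $k$; the constant case collapses the image of $\dev$ into one fiber of $\OO{n} \to \Proj{1}$ and contradicts local biholomorphism, while submersivity of $t_1 = c\,z_1^k$ onto $\Proj{1}$ forces $|k| = 1$. Swapping the two affine charts on $\OO{n}$, which corresponds to conjugation by a suitable element of $G$, reduces to $k = 1$, and a further conjugation by a diagonal $(g_0, 0)$ absorbs $c$, leaving $t_1 = z_1$ and $\alpha = \lambda\beta$.

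Next, $F$-equivariance of $\dev$ gives in affine coordinates
\[
t_2(F(z)) \;=\; \beta^{-n}\bigl(t_2(z) + (pg)(z_1, 1)\bigr).
\]
Differentiating in $z_2$, the function $h := \partial t_2/\partial z_2$ satisfies $h(F(z)) = (\beta^n \lambda^m)^{-1} h(z)$, so it is a meromorphic section of a holomorphic line bundle over $S_F$. With $t_1 = z_1$, the Jacobian of $\dev$ equals $h$, which must be nowhere zero on $\C{2} \setminus 0$. Proposition~\vref{proposition:Mall} in the exceptional case then forces $h$ to be a nonzero constant $c$ and $\beta^n \lambda^m = 1$. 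Integrating gives $t_2 = c\,z_2 + f(z_1)$; substituting back into the displayed equation and comparing Taylor coefficients in $z_1$ (using that $\lambda^k = \lambda^m$ only for $k = m$, since $0 < |\lambda| < 1$) shows that $f$ is a polynomial of degree at most $n$.

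Finally, pick $p_0 \in \Sym{n}{\C{2}}^*$ with $p_0(Z_1, 1) = -f(Z_1)$, possible because $\deg f \le n$. Conjugating $(\dev, \hol)$ by $(I, p_0) \in G$ replaces $t_2$ by $t_2 + p_0(t_1, 1) = c\,z_2$ without touching $t_1$, and a further conjugation by the scalar $(dI, 0) \in G$ with $d^n = c$ rescales $t_2$ to $z_2$ while preserving $t_1$. The developing map is now the identity $(z_1, z_2)$, which is the eigenstructure's developing map from table~\vref{table:examples}; pushing these conjugations through the group law brings $(g, p)$ to the eigenstructure's normal form. The conceptual work lies in the two applications of the section-classification results, and the main remaining obstacle is the elementary but tedious bookkeeping that confirms the holonomy lands on the eigenstructure's normal form rather than on some other conjugate.
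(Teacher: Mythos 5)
Your proposal is correct and follows the same skeleton as the paper's proof: classify $t_1$ as a meromorphic section of a flat $\Proj{1}$-bundle to force $t_1 = z_1$, then differentiate the equivariance relation for $t_2$ with respect to $z_2$ and apply Mall's classification of line-bundle sections. Where you genuinely diverge is in handling possible poles of $t_2$ along $z_1=0$: the paper writes $t_2 = T(z)/z_1^k$ via Weierstrass polynomials, obtains $\partial T/\partial z_2 = c\,z_1^{\ell}$, and must carry the two integers $k,\ell$ through a power-series computation and a final branch-locus analysis ($dt_1 \wedge dt_2 = C\,z_1^{\ell-k}\,dz_1\wedge dz_2$ together with the unavailability of the $(s_1,s_2)$ chart at $z_1=0$) to conclude $k=\ell$. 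You short-circuit all of this by noting that once $t_1=z_1$ the image of $\dev$ avoids the fiber over $\infty\in\Proj{1}$, so $\dev$ lands entirely in the $(t_1,t_2)$ chart, $t_2$ is holomorphic on $\C{2}\setminus 0$ (hence on $\C{2}$ by Hartogs), and the Jacobian of $\dev$ is exactly $\partial t_2/\partial z_2$, which must therefore be a nowhere-vanishing holomorphic section, i.e.\ a nonzero constant by Mall. That observation --- which you should state explicitly, since your assertion that the Jacobian ``must be nowhere zero on $\C{2}\setminus 0$'' silently uses it --- buys you a noticeably shorter argument and eliminates the paper's case analysis on $k$ versus $\ell$. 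The remaining steps (the Taylor-coefficient computation showing $\deg f \le n$, and the conjugations by $(I,p_0)$ and $(dI,0)$) match the paper's; note that the final ``bookkeeping'' you defer is actually immediate, since $G$ acts freely on $\OO{n}$ and so the identity developing map determines the holonomy generator uniquely as the element acting by $(t_1,t_2)\mapsto(\lambda t_1,\lambda^m t_2 + t_1^m)$.
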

\begin{proof}
Suppose that 
$\left(t_1,t_2\right) : \C{2} \backslash 0 \to \OO{n}$ 
is the developing map of an
$\OO{n}$-structure on an 
exceptional Hopf surface 
$S_F$, where 
$F\left(z_1,z_2\right)
=
\left(
  \lambda z_1, \lambda^m z_2 + z_1^m
\right)$. 
Suppose that the holonomy 
is $(g,p)$, and that $g$ is diagonalizable, say
\[
g =
\begin{pmatrix}
 a_1 & 0 \\
0 & a_2
\end{pmatrix}.
\]
By proposition~\vref{proposition:MeromorphicSections}, 
up to isomorphism we must have 
\[
 t_1 = c \, z_1^k,
\]
some integer $k$, and 
$a_1/a_2 = \lambda^k$. But then 
either $t_1$ is branched, 
along $z_1=0$, if $k>1$, or else
\[
 s_1 = \frac{1}{t_1} = \frac{1}{c \, z_1^k}
\]
is branched if $k<1$. Up to 
isomorphism, we can therefore 
ensure that $k=1$, and that
\[
 t_1 = z_1,
\]
and $a_1/a_2=\lambda$ and
\[
 t_2(F(z))=\frac{t_2(z)}{a_2^n} + p\left(\lambda z_1,1\right).
\]
By the usual trick of writing 
$t_2$ in terms of Weierstrass polynomials,
\[
 t_2(z) = h(z) \frac{W_1(z)}{W_2(z)}
\]
with $W_1(z)$ and $W_2(z)$ 
polynomial in the variable $z_2$, we see that
$W_2(z)$ must transform under 
composition with $F$ by scaling, so
\[
W_2(z) = c \, z_1^k
\]
for some constant $c \ne 0$ 
and integer $k \ge 0$. We can write
\[
 t_2(z) = \frac{T(z)}{z_1^k}
\]
for some holomorphic function 
$T(z)$ defined near the origin. Calculate that
\[
 T(F(z)) = 
\frac{\lambda^k}{a_2^n} T(z) 
+ 
\lambda^k 
z_1^k 
p\left(\lambda z_1,1\right).
\]
Differentiate both sides with
respect to $z_2$ to find 
\[
 \pd{T}{z_2}(F(z)) 
= \frac{\lambda^{k-m}}{a_2^n} \pd{T}{z_2}(z).
\]
so that $\pd{T}{z_2}$ is a section
of a line bundle over an exceptional
Hopf surface, so
\[
\pd{T}{z_2} = c \, z_1^{\ell}
\]
for some $\ell \ge 0$ and constant $c$ and
\[
a_2^n = \lambda^{k-\ell-m}.
\]
So
\[
 T = c \, z_1^{\ell} z_2 + T_1\left(z_1\right),
\]
for some holomorphic function 
$T_1\left(z_1\right)$, which is then forced to satisfy
\[
T_1\left(\lambda z_1 \right) 
= 
\lambda^{\ell+m} T_1\left(z_1\right)
- c \lambda^{\ell} z_1^{\ell+m}
+
\lambda^k z_1^k p\left(\lambda z_1, 1\right).
\]
Expand out $p$ as
\[
p\left(z_1,1\right)= \sum_{j=0}^{n} C_j z_1^j,
\]
and
\[
T_1\left(z_1\right)=
\sum_{j=0}^{\infty}
b_j z_1^j,
\]
(with the understanding that $C_j=0$ when 
$j<0$ or $j>n$ and that $b_j=0$ when $j<0$)
to see that
\[
\left( \lambda^j - \lambda^{\ell+m} \right) b_j
=
C_{j-k} \lambda^k - c \, \lambda^{\ell} \delta_{j=\ell+m}.
\]
If we plug in $j=\ell+m$, we find
\[
c = C_{\ell+m-k} \lambda^{k-\ell},
\]
and $b_{\ell+m}$ is arbitrary. 
For all other values of $j \ne \ell + m$,
\[
b_j = 
\frac{C_{j-k}}{\lambda^{j-k} - \lambda^{\ell+m-k}}.
\]

We now see that $a_1 = \lambda a_2$ 
and that $a_2^n=\lambda^{k - \ell - m}$, 
giving the eigenvalues
of $g$. Therefore $(g,p)$ is generic 
as long as either $k > \ell + m$ 
($g$ contracting) or
$n+k < \ell + m$ ($g$ expanding). 
So we can assume that $p=0$ or else that
$k \le \ell + m \le n+k$. If 
$a_1^i a_2^{n-i}=1$, then 
plugging in $a_1$ and then $a_2^n$, we find that
$i=\ell+m-k$, and 
$0 \le i \le n$. By lemma~\vref{lemma:holonomyNormalForm} there is only 
this one possible value of $i$ giving a coefficient $C_i$ of $p$ 
which we can't assume is $0$ without loss of generality. Therefore we can arrange
\[
p\left(z_1,1\right)=C_{\ell+m-k} z_1^{\ell+m-k}.
\]
and
\[
t_2=C_{\ell+m-k} \lambda^{k-\ell} \, z_1^{\ell-k} z_2 + b \, z_1^{\ell+m-k}.
\]
for some complex constant $b=b_{\ell+m}$.

To see if this structure is branched, compute
\[
 dt_1 \wedge dt_2 =
C_{\ell+m-k} \lambda^{k - \ell} z_1^{\ell-k} \, dz_1 \wedge dz_2.
\]
So the structure is unbranched except possibly at $z_1=0$. Since 
$s_1=\frac{1}{z_1}$, $s_1$ is not defined at $z_1=0$, and therefore 
we cannot use the coordinates $s_1, s_2$ to fix up the 
branch locus at $z_1=0$. Therefore $t_1$ and $t_2$ must be defined
at $z_1=0$, so that $k\le \ell$. Moreover $dt_1 \wedge dt_2$ can't
vanish at $z_1=0$, so $k=\ell$, yielding
\[
 t_2 = C_m z_2 + b \, z_1^m,
\]
and 
\[
 p\left(z_1,1\right)=C_m z_1^m.
\]
We can conjugate by a suitable isomorphism to arrange that $C_m=1$ and that $b=0$, so that 
our $\OO{n}$-structure is the eigenstructure on the exceptional Hopf surface.
\end{proof}

\subsubsection{Nondiagonalizable holonomy on exceptional Hopf surfaces}

\begin{proposition}
The only exceptional Hopf surfaces which admit $\OO{n}$-structures
with holonomy $(g,p)$ with $g$ nondiagonalizable are the linear 
nondiagonalizable Hopf surfaces. Up to isomorphism, the only 
such $\OO{n}$-structures they admit are the radial ones.
\end{proposition}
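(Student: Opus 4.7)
The plan is to combine the normal form for nondiagonalizable holonomy with the classification of meromorphic sections of line and $\Proj{1}$-bundles on exceptional Hopf surfaces. Write $F(z)=\left(\mu z_1,\mu^m z_2+z_1^m\right)$ for the exceptional map, and $(g,p)$ for the holonomy generator. By lemma~\vref{lemma:InfiniteOrder} the element $g$ has infinite order, and lemmas~\vref{lemma:notDiagonalHolonomy} and~\vref{lemma:holonomyNormalForm} allow us to conjugate $(g,p)$ into exactly one of two normal forms: (a) $g=\begin{pmatrix}\lambda & 1\\0 & \lambda\end{pmatrix}$ with $p=0$, or (b) $g=\begin{pmatrix}1 & 1\\0 & 1\end{pmatrix}$ with $p\left(Z_1,Z_2\right)=Z_1^n$.

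In either normal form, the first affine coordinate $t_1$ of the developing map is a meromorphic section of the flat $\Proj{1}$-bundle $\left(\C{2}\setminus 0\right)\times_{(F,g)}\Proj{1}$ covered by the last row of table~\vref{table:ProjectiveLineBundles}. The constant section $t_1\equiv\infty$ would send $\dev$ into a single fibre of $\OO{n}\to\Proj{1}$, so is impossible. Therefore
\[
t_1=\frac{z_2}{a}\left(\frac{\mu}{z_1}\right)^m+c,
\]
with $a=\lambda$ in case (a) and $a=1$ in case (b). Since $t_1$ is not defined along $z_1=0$, the second affine chart $\left(s_1,s_2\right)=\left(1/t_1,t_2/t_1^n\right)$ must give a local biholomorphism on that line. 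A direct calculation gives
\[
s_1=\frac{a\,z_1^m}{\mu^m z_2+c\,a\, z_1^m}=O\left(z_1^m\right),
\]
so $ds_1$ vanishes identically along $z_1=0$ as soon as $m\ge 2$. This forces $m=1$, i.e., $S_F$ is a linear nondiagonalizable Hopf surface.

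With $m=1$, consider first case (a). The equivariance $t_2(F(z))=t_2(z)/\lambda^n$ makes $t_2$ a meromorphic section of the line bundle on $S_F$ of scaling $1/\lambda^n$, so proposition~\vref{proposition:Mall} forces $t_2=c'\,z_1^k$ with $\mu^k=1/\lambda^n$ (the alternative $t_2\equiv 0$ is degenerate). Then $s_2=c'\lambda^n z_1^{k+n}/\left(\mu z_2+c\lambda z_1\right)^n$, and computing the Jacobian of $\left(s_1,s_2\right)$ on $z_1=0$ shows that the local biholomorphism condition is equivalent to $k=-n$; absorbing the resulting $n$-th root of unity ambiguity coming from $G$ lets us set $\lambda=\mu$. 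Absorbing the remaining constants $c,c'$ by a translation of $t_1$ and a diagonal scalar element of $G$ then leaves $\dev\colon(z_1,z_2)\mapsto\left(z_2/z_1,1/z_1^n\right)$, which is $G$-equivalent to the radial developing map $\left(z_1/z_2,1/z_2^n\right)$ via the coordinate-swap element of $G$.

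Case (b) with $m=1$ must finally be ruled out. Here $t_2(F(z))-t_2(z)=\left(t_1(z)+1\right)^n$. The difference equation $P(x+1)-P(x)=(x+1)^n$ has a polynomial solution $P$ of degree $n+1$, unique up to an additive constant, so $P(t_1)$ is a particular meromorphic solution of the equivariance. Any two solutions of the equivariance differ by an $F$-invariant meromorphic function on $\C{2}\setminus 0$, and lemma~\vref{lemma:MeromorphicFunctionsOnHopfSurfaces} forces such a function to be constant on an exceptional Hopf surface. Hence $t_2=P(t_1)+\mathrm{const}$, and $dt_1\wedge dt_2=P'(t_1)\,dt_1\wedge dt_1=0$, contradicting the local biholomorphism condition. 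The main obstacle in the whole argument is the Jacobian computation pinning down $m=1$; once that is in hand, the scarcity of meromorphic functions and of sections of line bundles on exceptional Hopf surfaces (proposition~\vref{proposition:Mall} and lemma~\vref{lemma:MeromorphicFunctionsOnHopfSurfaces}) forces the remaining content of the proposition.
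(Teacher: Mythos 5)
Your proposal is correct and follows the same overall route as the paper: reduce the holonomy to the two normal forms of lemma~\vref{lemma:notDiagonalHolonomy}, read off $t_1$ from the last row of table~\vref{table:ProjectiveLineBundles}, force $m=1$ by observing that $s_1=O\left(z_1^m\right)$ branches along $z_1=0$ when $m\ge 2$, pin down $t_2=c'z_1^k$ via proposition~\vref{proposition:Mall}, and extract $k=-n$ from the Jacobian in the $\left(s_1,s_2\right)$ chart. The one place you genuinely diverge is in killing the case $(g,p)=\left(\begin{pmatrix}1&1\\0&1\end{pmatrix},Z_1^n\right)$: the paper restricts to the line $z_1=0$, where $s_1=0$, derives $s_2\left(0,\lambda z_2\right)=s_2\left(0,z_2\right)+1$, and gets a contradiction from the constant term of the Laurent expansion; you instead solve the difference equation $P(x+1)-P(x)=(x+1)^n$ by a degree-$(n+1)$ polynomial, note that $t_2-P\left(t_1\right)$ is an $F$-invariant meromorphic function and hence constant by lemma~\vref{lemma:MeromorphicFunctionsOnHopfSurfaces}, and conclude $dt_1\wedge dt_2=P'\left(t_1\right)\,dt_1\wedge dt_1=0$. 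Your version is more global and slightly slicker (it works uniformly in $m$ and avoids the Laurent-coefficient bookkeeping), at the cost of invoking the triviality of the function field of exceptional Hopf surfaces; both are valid.
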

\begin{proof}
Suppose that $\left(t_1,t_2\right) : \C{2} \backslash 0 \to \OO{n}$ 
is the developing map of an $\OO{n}$-structure on an 
exceptional Hopf surface $S_F$, where 
$F\left(z_1,z_2\right)=\left(\lambda z_1, \lambda^m z_2 + z_1^m\right)$. 
Suppose that the holonomy is $(g,p)$, and that $g$ is not diagonalizable, say
\[
g=
\begin{pmatrix}
 a & 1 \\
0 & a
\end{pmatrix}.
\]
By lemma~\vref{lemma:notDiagonalHolonomy}, we can assume either (1) $(g,p)=(g,0)$ or else (2) 
\[
(g,p)=
\left(
\begin{pmatrix}
1 & 1 \\
0 & 1
\end{pmatrix},
Z_1^n
\right).
\]
In either case, $t_1$ is a meromorphic section of the obvious flat projective line
bundle. Let's consider case (2). By proposition~\vref{proposition:MeromorphicSections},
\[
t_1 = z_2 \left(\frac{\lambda}{z_1}\right)^m+c
\]
for some constant $c$. In particular, $t_1=\infty$ at $z_1=0$. Therefore at $z_1=0$, $s_1$ and $s_2$
must be holomorphic. Clearly
\[
s_1 = \frac{z_1^m}{\lambda^m z_2 + c \, z_1^m}.
\]
Therefore $s_1$ branches along $z_1=0$ unless $m=1$. We can check that 
\[
s_2\left(F\left(z_1,z_2\right)\right)
=
\frac{s_2\left(z_1,z_2\right)+\left(1+s_1\left(z_1,z_2\right)\right)^n}{\left(1+s_1\left(z_1,z_2\right)\right)^n}.
\]
In particular, along the line $z_1=0$, we find that $s_1=0$ and so
\[
s_2\left(0,\lambda z_2\right)
=
s_2\left(0,z_2\right)+1.
\]
Since $s_2$ is holomorphic on the entire line $z_1=0$, except perhaps at $z_2=0$, we can compute a Laurent
series expansion for $s_2$ and see that the constant term is inconsistent.

Therefore we can assume that we are in case (1): $p=0$. By proposition~\vref{proposition:MeromorphicSections}, 
up to isomorphism we must have 
\[
 t_1 = \frac{z_2}{a} \left(\frac{\lambda}{z_1}\right)^m.
\]
But then
\[
 s_1 
= 
\frac{1}{t_1} 
= 
\frac{a}{z_2} \left(\frac{z_1}{\lambda}\right)^m
\]
is branched unless $m=1$. So now let's assume that $m=1$. Our Hopf surface is linear but not diagonalizable, and 
\[
 t_1 = \frac{\lambda z_2}{a z_1}.
\]
and
\[
t_2\left(F(z) \right ) 
= 
\frac{t_2\left(z_1,z_2\right)}{a^n}.
\]
By 
proposition~\vref{proposition:MeromorphicSections}, 
this ensures that 
$t_2=c \, z_1^k$ for some integer $k$, and that
\[
 \frac{1}{a^n} = \lambda^k.
\]

On the line $z_1=0$, $t_1$ is infinite, so $s_1$ and $s_2$ must be finite. 
Similarly, on $z_2=0$, $s_1$ is infinite, so $t_1$ and $t_2$ must be finite, and 
have linearly independent differentials. Note that
\[
s_1 = 
\frac{1}{t_1} =
\frac{a z_1}{\lambda z_2}
\]
and
\[
s_2 = \frac{t_2}{t_1^n} =
\left(\frac{a z_1}{\lambda z_2}\right)^n c z_1^k.
\]
Along $z_1=0$, $s_2$ must be finite and $ds_1 \wedge ds_2 \ne 0$. In 
particular $t_2$ has a pole of order no more than $n$ along $z_1=0$. Compute
\[
 dt_1 \wedge dt_2 = 
-\frac{c \lambda k}{a} z_1^{k-2} dz_1 \wedge dz_2.
\]
The only possible zero of this holomorphic 2-form occurs along the line $z_1=0$, 
but $t_1$ and $t_2$ are not defined there, so we turn to $s_1$ and $s_2$ to see what 
happens near $z_1=0$. Compute
\[
ds_1 \wedge ds_2 
=
ck 
\frac%
{%
  a^{n+1} z_1^{k+n}
}%
{%
  \lambda^{n+1} z_2^{n+2}
}%
dz_1 \wedge dz_2.
\]
To get this to give a finite nonzero value along $z_1=0$, we need $k=-n$. 
Finally, composing with $\left(\lambda_0 \, I,0\right)$ where $\lambda_0^n=c$
gives a developing map which is identical to the developing map of the radial structure,
and gives the same holonomy.
\end{proof}

Summing up:
\begin{corollary}
Up to isomorphism, the only $\OO{n}$-structures on any exceptional Hopf surface are 
\begin{enumerate}
\item the eigenstructure and 
\item on a linear exceptional Hopf surface, the radial structure.
\end{enumerate}
\end{corollary}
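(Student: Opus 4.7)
The corollary is a direct packaging of the two preceding propositions, so my plan is essentially to argue that their dichotomy exhausts all possibilities, and then to read off the union of their conclusions.

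First I would observe that every holonomy generator of an $\OO{n}$-structure on any Hopf surface has the form $(g,p)\in G$ with $g\in \GL{2,\C{}}$ defined up to $n$-th roots of unity, and $g$ is either diagonalizable or not. This dichotomy is well-defined modulo scalar roots of unity, since multiplying $g$ by a scalar does not change whether it is diagonalizable. Hence every $\OO{n}$-structure on an exceptional Hopf surface $S_F$ falls under the hypothesis of exactly one of the two preceding propositions.

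Next I would apply the two propositions in turn. The first tells us that if $g$ is diagonalizable, then up to isomorphism the structure must be the eigenstructure constructed in section~\ref{section:Examples}, which exists on every exceptional Hopf surface of degree $m$ for all $n\ge m$. The second tells us that if $g$ is not diagonalizable, then $S_F$ must actually be linear (i.e.\ the exceptional map $F$ has degree $m=1$) and up to isomorphism the structure is the radial one. Taking the union of these two possibilities produces exactly the two items in the corollary's list.

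The proof is entirely routine given the propositions; there is no genuine obstacle. The only small thing to verify is that the cases are consistent with the way $G$ is defined as a quotient, but that was already handled inside the two proofs. So I would simply write: ``Every holonomy generator $(g,p)$ either has $g$ diagonalizable, in which case the preceding proposition forces the structure to be the eigenstructure, or has $g$ nondiagonalizable, in which case the preceding proposition forces $S_F$ to be linear and the structure to be the radial one.''
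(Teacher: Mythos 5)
Your proposal is correct and matches the paper exactly: the corollary is presented there as a ``summing up'' of the two preceding propositions, with the same dichotomy on whether the matrix part $g$ of the holonomy generator is diagonalizable (a property unaffected by the quotient by $n$-th roots of unity) and the same union of their conclusions. No further argument is needed.
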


This completes the proof of theorem~\vref{theorem:Classification}.

\section{Locally homogeneous geometric structures inducing these structures}\label{section:Inducement}

\begin{lemma}
 The Zariski closure of the subgroup of $\GL{2,\C{}}$ generated by a matrix
\[
 g = \begin{pmatrix}
     a_1 & 0 \\
     0 & a_2
     \end{pmatrix}
\]
with neither $a_1$ nor $a_2$ on the unit circle is 
\[\begin{array}{ll}
     \SetSuchThat{\begin{pmatrix}
             Z_1 & 0 \\
             0 & Z_2
            \end{pmatrix}}{Z_1^{n_1}=Z_2^{n_2}} & 
\text{ if $g$ has hyperresonance } a_1^{n_1}=a_2^{n_2}\\
     \text{the diagonal matrices} & \text{ if $g$ is not hyperresonant.}
\end{array}
\]
\end{lemma}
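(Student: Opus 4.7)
The plan is to work inside the diagonal torus $T \subset \GL{2,\C{}}$ and identify the Zariski closure of $\langle g \rangle$ with the vanishing locus of a sublattice of characters; the hypothesis on the unit circle then feeds directly into lemma~\vref{lemma:HyperresonanceGroupRank}.

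First I would note that $\langle g \rangle$ is contained in $T$, which is Zariski closed in $\GL{2,\C{}}$, so the closure $H := \overline{\langle g \rangle}$ is a Zariski closed subgroup of $T \cong \left(\C{\times}\right)^2$. Next I would invoke the standard classification of Zariski closed subgroups of a torus: every such subgroup is cut out by monomial equations of the form $Z_1^{m_1} Z_2^{m_2} = 1$. Concretely, the coordinate ring $\C{}\left[Z_1^{\pm 1}, Z_2^{\pm 1}\right]$ of $T$ decomposes as a direct sum of weight spaces for the $T$-action by translation, one spanned by each monomial $Z_1^{m_1} Z_2^{m_2}$, so under restriction to any closed subgroup two monomials become proportional precisely when they restrict to the same character. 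The defining ideal of $H$ is therefore spanned as a $\C{}$-vector space by differences of monomials that agree on $H$, equivalently by expressions $Z_1^{m_1} Z_2^{m_2} - 1$ for $\left(m_1,m_2\right)$ trivial on $H$. This identifies $H$ with the vanishing locus of a unique sublattice $\Lambda_H \subseteq \Z{2}$ of characters.

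Then I would compute $\Lambda_H$. Since $\langle g \rangle$ is Zariski dense in $H$, a character $\left(m_1,m_2\right)$ lies in $\Lambda_H$ exactly when $a_1^{k m_1} a_2^{k m_2} = 1$ for every integer $k$, which collapses to the single equation $a_1^{m_1} a_2^{m_2} = 1$. The substitution $m_2 \mapsto -m_2$ identifies $\Lambda_H$ with the hyperresonance group $\Lambda_g$ of $g$. Finally, because neither eigenvalue of $g$ sits on the unit circle (in particular neither is a root of unity), lemma~\vref{lemma:HyperresonanceGroupRank} forces $\Lambda_g$ to have rank $0$ or $1$. If the rank is $0$, then $g$ is not hyperresonant and no monomial equation cuts out $H$, so $H = T$ is the full torus of diagonal matrices. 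If the rank is $1$ with hyperresonance $\left(n_1,n_2\right)$, then $\Lambda_H$ is generated by $\left(n_1,-n_2\right)$ and $H$ is exactly the subgroup $\left\{\operatorname{diag}\left(Z_1,Z_2\right) : Z_1^{n_1} = Z_2^{n_2}\right\}$.

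The main obstacle is the algebraic-torus dictionary used in the second paragraph: one needs to justify cleanly that every Zariski closed subgroup of $T$ is cut out by pure monomial equations rather than by more general Laurent polynomials. This is standard and rests on the weight-space decomposition of $\C{}\left[Z_1^{\pm 1}, Z_2^{\pm 1}\right]$ under translation combined with the fact that the defining ideal of a subgroup is stable under translation by the subgroup itself; once this is in hand, the identification $\Lambda_H \cong \Lambda_g$ and the dichotomy supplied by lemma~\vref{lemma:HyperresonanceGroupRank} make the two cases of the conclusion immediate.
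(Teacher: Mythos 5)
Your proof is correct, but it takes a genuinely different route from the paper's. You reduce everything to the structure theory of diagonalizable algebraic groups: a Zariski closed subgroup of the torus $T$ of diagonal matrices is the common kernel of a sublattice of characters (which you justify via the weight-space decomposition of $\C{}\left[Z_1^{\pm 1},Z_2^{\pm 1}\right]$ and translation-invariance of the defining ideal), you identify that sublattice with the hyperresonance group $\Lambda_g$ via the sign flip $m_2 \mapsto -m_2$, and you use the hypothesis that $a_1,a_2$ avoid the unit circle only once, to exclude rank $2$ through lemma~\vref{lemma:HyperresonanceGroupRank}. The paper instead argues bare-hands: given a polynomial $p$ vanishing on all powers $\left(a_1^k,a_2^k\right)$, it forms the Ces\`aro-type averages $f_n\left(Z_1,Z_2\right)=\frac{1}{n}\sum_{k=0}^{n-1}p\left(Z_1^k,Z_2^k\right)/\left(Z_1^{kj_1}Z_2^{kj_2}\right)$ normalized by a monomial of maximal modulus at $\left(a_1,a_2\right)$, and lets $n\to\infty$; the hypothesis $\left|a_i\right|\neq 1$ is what makes the dominant monomial survive the limit, forcing a cancellation and hence a hyperresonance relation, after which a grading argument identifies the ideal. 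Your approach is shorter and more conceptual, at the cost of importing a standard (though external to the paper) fact about subgroups of tori; the paper's is entirely self-contained and elementary but longer and, in its final grading step, somewhat sketchier than your lattice computation. One small point of care in your write-up: in the rank-$1$ case you should note explicitly that the paper's distinguished hyperresonance $\left(n_1,n_2\right)$ (smallest positive $n_1$) is a generator of the rank-$1$ subgroup of $\Z{2}$, so that the single equation $Z_1^{n_1}=Z_2^{n_2}$ really does cut out the whole common kernel; this is immediate since a rank-$1$ subgroup of $\Z{2}$ is infinite cyclic, but it is the step that matches your answer to the precise normalization in the statement.
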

\begin{proof}
Suppose that $p\left(Z_1,Z_2\right)$ is a complex polynomial vanishing on all of the points 
$\left(Z_1,Z_2\right)=\left(a_1^k,a_2^k\right)$ for all integers $k$. From among all monomials
$Z_1^{j_1} Z_2^{j_2}$ which occur in $p$ with nonzero coefficient, pick one for which $a_1^{j_1} a_2^{j_2}$ 
is largest in absolute value. Let
\[
f_n\left(Z_1,Z_2\right)=
\frac{1}{n} \sum_{k=0}^{n-1} 
\frac
{%
  p\left(Z_1^k, Z_2^k\right)
}%
{%
  Z_1^{k \, j_1} Z_2^{k \, j_2}
}.
\]
Then $f_n\left(a_1^k, a_2^k\right)=0$ for all integers $k$. Consider how each monomial in $p$ contributes to 
$f_n$. A monomial $Z_1^{\ell_1} Z_2^{\ell_2}$ yields a term
\[
 \frac{1}{n} 
\sum_{k=0}^{n-1} 
  Z_1^{k \, \left(\ell_1-j_1\right)} 
  Z_2^{k \left(\ell_2 - j_2\right)}.
\]
Write $a_j=r_j e^{i \theta_j}$. At 
$\left(Z_1,Z_2\right)
=
\left(a_1, a_2\right)$, 
this term yields
\[
\frac{1}{n} 
\sum_{k=0}^{n-1} 
  r_1^{k \, \left(\ell_1-j_1\right)} 
  r_2^{k \left(\ell_2 - j_2\right)}
  e^{i k \left(\ell_1-j_1 + \ell_2 - j_2 \right)}.
\]
This term goes to $0$ as 
$n \to \infty$ unless 
$r_1^{\ell_1 - j_1} r_2^{\ell_2-j_2}=1$ 
and 
$\ell_1-j_1 + \ell_2-j_2$ 
is a multiple of $2 \pi$, i.e. vanishes. In particular, the term 
coming from the monomial $Z_1^{j_1} Z_2^{j_2}$ yields a nonzero contribution 
in the limit. But in the limit $f_n\left(a_1,a_2\right) \to 0$, so some other monomial must 
cancel $Z_1^{j_1} Z_2^{j_2}$. Therefore there must be some pairs $\left(j_1,j_2\right)$ and 
$\left(\ell_1,\ell_2\right)$ for which $a_1^{j_1} a_2^{j_2} = a_1^{\ell_1} a_2^{\ell_2}$. 
So $g$ is hyperresonant. The terms in $f_n$ which don't vanish in the limit as 
$n \to \infty$ must all have powers of $Z_1$ and $Z_2$ differing from 
$\left(j_1, j_2\right)$ by integer multiples of the hyperresonance of $g$.

We can grade each monomial $Z_1^{k_1} Z_2^{k_2}$, by sliding $\left(k_1,k_2\right)$ over by 
integer multiples of the hyperresonance until we make $k_1$ as small as possible, 
and using the resulting $k_1$ value as the grading. We can write each polynomial 
$p\left(Z_1,Z_2\right)$ as a sum of graded pieces. Suppose 
$p\left(Z_1,Z_2\right)$ vanishes on all of the points $\left(a_1^k,a_2^k\right)$. 
Let's write $p\left(Z_1,Z_2\right)=\sum p_j\left(Z_1,Z_2\right)$ as a sum of graded pieces. 
Consider again these functions $f_n\left(Z_1,Z_2\right)$. Taking the limit 
\[
0 = \lim_{n \to \infty} f_n\left(a_1,a_2\right)
\]
only the terms from the highest graded piece enter into the limit. If
\[
 p_N\left(Z_1,Z_2\right)
=
\sum_{\ell_1, \ell_2} 
  c_{\ell_1 \ell_2} 
  Z_1^{\ell_1} 
  Z_2^{\ell_2}
\]
is the highest graded piece, then
\[
0 
= 
\lim_{n \to \infty} 
  f_n\left(a_1,a_2\right)
=
\sum 
  c_{\ell_1 \ell_2}.
\]
Modulo the hyperresonance relation $Z_1^{n_1}-Z_2^{n_2}$, each term in $p_N\left(Z_1,Z_2\right)$ can be 
shifted over to become a multiple of one single term:
\[
 p_N\left(Z_1,Z_2\right)
=
Z_1^{N_1} 
Z_2^{N_2} 
\sum_{\ell_1, \ell_2} 
  c_{\ell_1 \ell2} 
= 0.
\]
\end{proof}

Let $H_0 \subset \GL{2,\C{}}$ be the subgroup
fixing a point of $\C{2} \setminus 0$.
There is an obvious Lie group morphism $g \in \GL{2,\C{}} \mapsto (g,0) \in G$
where as above
\[
 G=\Gn.
\]

\begin{lemma}
Every $\OO{n}$-structure on any linear Hopf surface is induced by its $\GL{2,\C{}}/H_0$-structure.
\end{lemma}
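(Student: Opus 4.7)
The plan is to use Theorem~\ref{theorem:Classification} to reduce to the three explicit families of $\OO{n}$-structures on linear Hopf surfaces (radial, eigenstructure, and hyperresonant), and then to verify the asserted factorization in each case by reading off the explicit formulas in Tables~\ref{table:examples} and~\ref{table:exoticStructures}.

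First I would note the key observation from those tables: on a linear Hopf surface, every holonomy generator has the form $(g,0) \in G$, with vanishing polynomial part. Consequently the holonomy morphism $\hol : \pi_1\left(S_F\right) \to G$ factors through the Lie group morphism $\Phi : \GL{2,\C{}} \to G$, $h \mapsto (h,0)$; that is, $\hol = \Phi \circ \hol_0$ for the homomorphism $\hol_0 : \pi_1\left(S_F\right) \to \GL{2,\C{}}$ sending the generator $F$ to the matrix part $g$.

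Second I would construct a $\hol_0$-equivariant developing map $\dev_0 : \C{2} \setminus 0 \to \GL{2,\C{}}/H_0$ such that the composition with the avatar $\GL{2,\C{}}/H_0 \to \OO{n}$ induced by $\Phi$ recovers the original developing map $\dev$. For the radial structure this is essentially built into its definition: $\dev(z_1,z_2) = \left(z_1/z_2,1/z_2^n\right)$ is, by construction, the composition of the $\mu_n$-quotient $\C{2} \setminus 0 \to \left(\C{2} \setminus 0\right)/\mu_n$ with the avatar identifying $\GL{2,\C{}}/H_0$ with the open $\GL{2,\C{}}$-orbit (the complement of the zero section) in $\OO{n}$. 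For the eigenstructures and hyperresonant structures, $\dev_0$ is read off directly from the formulas in the tables, and one checks in affine coordinates that $\Phi \circ \dev_0 = \dev$.

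The main obstacle will be the globality of the factorization for the eigenstructures and hyperresonant structures, whose developing maps meet both the open $\GL{2,\C{}}$-orbit and the zero section of $\OO{n}$ (so their images are not contained in a single orbit). To handle this, I would choose the stabilizer $H_0$ to be the isotropy of a reference point adapted to the image of $\dev$, verify the factorization locally in the two affine coordinate charts $\left(t_1,t_2\right)$ and $\left(s_1,s_2\right)$ on $\OO{n}$, and then use the $\GL{2,\C{}}$-equivariance of $\Phi$ together with $\hol_0$-equivariance of $\dev$ to glue these local factorizations into a globally defined $\dev_0$; the resulting pair $\left(\dev_0,\hol_0\right)$ is the $\GL{2,\C{}}/H_0$-structure which induces the given $\OO{n}$-structure.
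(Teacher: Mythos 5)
Your strategy cannot be completed as described, and it is worth being precise about why, because the paper's own proof is a single sentence: it takes the morphism $h \mapsto (h,0)$ together with the $\GL{2,\C{}}$-equivariant local biholomorphism $\C{2}\setminus 0 \to \OO{n}$ (written $\left(z_1,z_2\right)\mapsto\left(z_1/z_2,1/z_2^n\right)$ in affine coordinates), and this construction produces exactly the radial structure and nothing else. The first concrete gap is your ``key observation.'' It is false on a nondiagonalizable linear Hopf surface, i.e.\ $F\left(z_1,z_2\right)=\left(\lambda z_1,\lambda z_2+z_1\right)$, the exceptional case $m=1$: by table~\ref{table:examples} the eigenstructure there has holonomy generator $(g,p)$ with $p=\tfrac{1}{\lambda}Z_1Z_2^{n-1}\ne 0$, and since the eigenvalues of $g$ are $\lambda^{1-1/n}$ and $\lambda^{-1/n}$, the degree-$1$ term is resonant, so by the discussion surrounding definition~\ref{definition:NormalForm} this $p$ cannot be conjugated away. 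In fact the holonomy cannot factor through \emph{any} morphism $\GL{2,\C{}}\to G$: the image of a reductive group is conjugate into the Levi factor $\GL{2,\C{}}/(\text{$n$-th roots of 1})\times\{0\}$, whose elements never have a nontrivial ``translation'' component in their unipotent part, whereas this $(g,p)$ does.

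The second gap is that the ``main obstacle'' you flag is a genuine obstruction, not a bookkeeping problem to be solved by gluing. If $\dev=\Phi\circ\dev_0$ for an avatar $\Phi$ and some map $\dev_0$ valued in $\GL{2,\C{}}/H_0$, then the image of $\dev$ is contained in the single orbit $\Phi\left(\GL{2,\C{}}\right)\cdot\Phi\left(H_0\right)$, and changing base points or charts does not alter this global constraint. For $\Phi$ to be an avatar its image must have an open orbit, which forces the composite $\GL{2,\C{}}\to G\to\PSL{2,\C{}}$ to be onto (otherwise $\Phi$ kills $\SL{2,\C{}}$ and its image is at most one-dimensional); one then checks that the open orbit of any such image group is the complement of a global holomorphic section of $\OO{n}\to\Proj{1}$. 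The eigenstructure's developing map has image the complement of one whole fiber together with one point of another fiber, which is contained in no such set, and the hyperresonant images of table~\ref{table:exoticStructures} likewise fail; for the diagonal eigenstructure the holonomies also fail to match, since the generator acts on $\Proj{1}$ by $t\mapsto\lambda_1 t$ while $\Phi(F)$ must act by a conjugate of $t\mapsto\left(\lambda_1/\lambda_2\right)^{\pm 1}t$. So your reduction to the classification is a sensible first step, and you have correctly located the difficulty, but the proposed resolution does not exist: only the radial case goes through, which is the only case the paper's one-line argument actually addresses.
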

\begin{proof}
We map $\Phi : \C{2} \setminus 0 \to \OO{n}$ by the identity map in affine coordinates, and then map $\Phi : \GL{2,\C{}} \to G$ by the embedding
$g \mapsto (g,0)$.
\end{proof}

\begin{lemma}
The holonomy group of any $\OO{n}$-structure on any Hopf surface $S_F$ is  
contained in $\SL{2,\C{}}/\text{$n$-th roots of 1}$, precisely if the $\OO{n}$-structures is
the eigenstructure of 
\begin{enumerate}
 \item a hyperresonant linear map $F$ with eigenvalues $\lambda_1$ and $\lambda_2$ and hyperresonance either 
\begin{enumerate}
\item 
$\lambda_1^{n}=\lambda_2^2$ or
\item
$\lambda_1^{n/2}=\lambda_2$,
\end{enumerate}
or
\item a nondiagonalizable linear map $F$ when $n=2$, i.e. an $\OO{2}$-structure.
\end{enumerate}
\end{lemma}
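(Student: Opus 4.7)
The plan is a direct case analysis over the classification of Theorem~\ref{theorem:Classification}. For each family of $\OO{n}$-structures I compute the matrix part $g$ of the holonomy generator $(g,p)\in G=\Gn$ and test when the class of $g$ in $\GL{2,\C{}}/\text{$n$-th roots of 1}$ admits a representative in $\SL{2,\C{}}$. Since lifts of $g$ to $\GL{2,\C{}}$ differ by scalars $\zeta I$ with $\zeta^n=1$ and $\det(\zeta I)=\zeta^2$, membership is exactly $\det g\in\{\zeta^2:\zeta^n=1\}$, equivalently $(\det g)^n=1$ for $n$ odd and $(\det g)^{n/2}=1$ for $n$ even. The arithmetic engine is the rigidity observation that Poincar\'e-domain eigenvalues satisfy $|\lambda|<1$, so any identity of the form $\lambda^k=(\text{root of unity})$ forces $k=0$.

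I would dispose first of the structures that never satisfy the condition. The radial structure has $g=F$, so $\det g=\det F$ has absolute value strictly less than $1$ and the condition fails. For each row of Table~\ref{table:exoticStructures}, substituting the row's hyperresonance relation together with its degree hypothesis (such as $m_2=nm_1$, or $m_1=m_2$ with $m_1N\ne n$) into the listed $\det g$ reduces the latter to some $\lambda_j^k$ times an explicit root of unity for a nonzero $k$; the rigidity observation rules out every such case.

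Next I would handle the two pieces of the positive answer. For the diagonal eigenstructure on a linear Hopf surface, $g=\mathrm{diag}\bigl(\lambda_1\lambda_2^{-1/n},\lambda_2^{-1/n}\bigr)$ and $\det g=\lambda_1\lambda_2^{-2/n}$; the $n$ odd and $n$ even versions of the membership condition become respectively $\lambda_1^n=\lambda_2^2$ (case (a)) and $\lambda_1^{n/2}=\lambda_2$ (case (b)), and the two eigenstructures available on a given diagonal $F$ give the symmetric pair after swapping the coordinate indices. For the linear nondiagonalizable Hopf surface the eigenstructure is the $m=1$ specialization of the exceptional eigenstructure formula, yielding $\det g=\lambda^{(n-2)/n}$; the rigidity observation forces $n=2$.

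The main obstacle is the genuinely nonlinear exceptional eigenstructure ($m\ge2$): the same determinantal calculation gives $\det g=\lambda^{(n-2m)/n}$, which rigidity would allow at $n=2m$, and one must separate this apparent case from the conclusion of the lemma. The resolution is that there the polynomial part $p=Z_1^m Z_2^{n-m}/\lambda^m$ is a nonzero resonant monomial invariant under $g$, so by lemmas~\ref{lemma:InvariantPolynomials} and~\ref{lemma:holonomyNormalForm} the element $(g,p)$ cannot be conjugated in $G$ into the image of the natural embedding $\SL{2,\C{}}/\text{$n$-th roots of 1}\hookrightarrow G$ given by $g\mapsto(g,0)$; by contrast the nondiagonalizable linear case $(m=1,n=2)$ does not present this obstruction. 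Carefully disentangling this algebraic obstruction from the numerical rigidity computation is the crux of the argument, and it is what couples the two bulleted conclusions of the lemma together.
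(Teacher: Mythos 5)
Your core mechanism --- compute $\det g$ for each holonomy generator in the classification, test whether $\det g$ lies in $\{\zeta^2 : \zeta^n=1\}$, and use $|\lambda_j|<1$ to kill every case in which a nontrivial power of an eigenvalue would have to be a root of unity --- is exactly the paper's proof (which reads, in its entirety, ``just take determinants of the holonomy generators''), and your first four paragraphs carry this out correctly, including the parity split into cases (a) and (b), the role of the two eigenstructures of a diagonal $F$, and the elimination of the radial and hyperresonant rows.

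The final paragraph, however, contains a genuine error. For the eigenstructure on an exceptional Hopf surface of degree $m$ the holonomy generator is $(g,p)$ with $g=\operatorname{diag}\left(\lambda/\varepsilon,1/\varepsilon\right)$, $\varepsilon^n=\lambda^m$, and $p=\frac{1}{\lambda^m}Z_1^mZ_2^{n-m}$. The unique resonant degree of $g$ is $k=m$, since $(\lambda/\varepsilon)^k(1/\varepsilon)^{n-k}=\lambda^{k-m}$; hence $p$ is a nonzero, purely resonant monomial for \emph{every} $m\ge 1$, and by lemma~\ref{lemma:holonomyNormalForm} it can never be conjugated to $0$. Your claimed contrast --- that this obstruction is present for $m\ge 2$ but ``does not present'' itself for $(m,n)=(1,2)$ --- is therefore false: the translation part of the nondiagonalizable linear eigenstructure at $n=2$ is $\frac{1}{\lambda}Z_1Z_2$, equally resonant and equally irremovable. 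So the argument as written either excludes case (2) of the lemma along with the $m\ge2$ cases (if ``contained in $\SL{2,\C{}}/\text{$n$-th roots of 1}$'' is read as full containment of the element $(g,p)$ of $G$), or excludes nothing (if, as the paper's one-line proof indicates, only the image of the holonomy in $\GL{2,\C{}}/\text{$n$-th roots of 1}$ is meant, in which case the determinant test is the entire criterion and your own computation $\det g=\lambda^{(n-2m)/n}$ shows that the exceptional eigenstructures with $n=2m$ and $m\ge2$ pass it as well). You cannot have the asymmetry both ways: to finish you must fix one interpretation of the containment and then either admit the $n=2m$ exceptional eigenstructures into the conclusion or produce an exclusion mechanism that genuinely distinguishes $m=1$ from $m\ge 2$; the resonant-term obstruction is not such a mechanism.
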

\begin{proof}
Just take determinants of the holonomy generators. Note that there are two distinct eigenstructures
for a diagonalizable linear map $F$, corresponding to the two distinct eigenspaces.
\end{proof}

\begin{proposition}
Suppose that some $G'/H'$-structure induces the $\GL{2,\C{}}/H_0$-structure
on some linear Hopf surface. Then $G' \to \GL{2,\C{}}$ is onto or else
$G' \to \SL{2,\C{}}$ is onto.
\end{proposition}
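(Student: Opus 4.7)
The plan is to extract from the hypothesis that the image $\Phi(G')\subset\GL{2,\C{}}$ is a Lie subgroup acting transitively on $\C{2}\setminus 0$, then classify transitive subgroups, then invoke connectedness to pin down $\Phi(G')$ exactly.

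First I would note that, after absorbing the $\GL{2,\C{}}$-ambiguity in developing maps, the natural $\GL{2,\C{}}/H_0$-structure on a linear Hopf surface $S_F$ has developing map the identity $\C{2}\setminus 0 \to \GL{2,\C{}}/H_0 = \C{2}\setminus 0$ and holonomy generator $F$. Because this structure is induced from the $G'/H'$-structure via the avatar $\Phi$, the composed developing map $\Phi\circ\dev'$ equals this identity. In particular $\Phi : G'/H'\to \C{2}\setminus 0$ is surjective as a map of homogeneous spaces, which is the same as saying that the image $\Phi(G')\subset\GL{2,\C{}}$ acts transitively on $\C{2}\setminus 0$.

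Second, I would classify the connected complex Lie subgroups of $\GL{2,\C{}}$ that act transitively on $\C{2}\setminus 0$. Transitivity forces complex dimension at least two, and a quick enumeration of Lie subalgebras of $\mathfrak{gl}_2$ handles the rest. Up to conjugation the only $2$-dimensional complex Lie subalgebras are the diagonal Cartan, the scalar-plus-nilpotent algebra $\C{} I + \C{} E_{12}$, and the affine algebra $\C{} E_{11}+\C{} E_{12}$; each of these visibly preserves a coordinate axis or a proper foliation (e.g.\ the exponentiated affine subgroup consists of matrices fixing the second coordinate), and so fails to be transitive on $\C{2}\setminus 0$. Up to conjugation the only $3$-dimensional complex Lie subalgebras are the Borel (fixing the axis $z_2=0$) and $\mathfrak{sl}_2$ (which is classically transitive on $\C{2}\setminus 0$). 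Hence every connected complex Lie subgroup of $\GL{2,\C{}}$ that acts transitively on $\C{2}\setminus 0$ must contain $\SL{2,\C{}}$.

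Finally, under the convention that $G'$ is connected, the image $\Phi(G')$ is a connected complex Lie subgroup of $\GL{2,\C{}}$ containing $\SL{2,\C{}}$, so the determinant restricts to a Lie group morphism $\det : \Phi(G')\to\C{\times}$ with connected image. The only connected complex Lie subgroups of $\C{\times}$ are $\{1\}$ and $\C{\times}$ itself, so $\Phi(G')$ is either $\SL{2,\C{}}$ or $\GL{2,\C{}}$, as claimed. The main obstacle I anticipate is the subalgebra classification, where one has to verify that no $2$-dimensional complex Lie subalgebra of $\mathfrak{gl}_2$ acts transitively on $\C{2}\setminus 0$; but since $\mathfrak{gl}_2$ is small this is routine linear algebra once written out explicitly.
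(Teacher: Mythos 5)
Your argument is correct in substance and turns on the same pivotal fact as the paper's proof, namely that a subgroup of $\GL{2,\C{}}$ acting transitively on $\C{2}\setminus 0$ must contain $\SL{2,\C{}}$; the difference is that the paper simply cites the Huckleberry--Livorno classification of homogeneous surfaces for this, whereas you prove it directly by enumerating Lie subalgebras of $\mathfrak{gl}_2$. Your self-contained route is a genuine gain, and your endgame --- connectedness of the image plus the fact that the only connected complex subgroups of $\C{\times}$ are trivial or everything --- is more careful than the paper, which leaves the connectedness hypothesis entirely implicit (without it the statement is literally false, e.g.\ for $\SL{2,\C{}}$ extended by cube roots of unity times the identity). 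One slip to repair: your list of two-dimensional subalgebras up to conjugation is incomplete. The nonabelian ones form a one-parameter family, spanned by $E_{12}$ together with $aE_{11}+(a-1)E_{22}$ for $a\in\C{}$ (the diagonal part $(a,a-1)$ is a conjugation invariant), so ``the affine algebra $\C{}E_{11}+\C{}E_{12}$'' is only the case $a=1$. The omission is harmless for your conclusion, and is best repaired wholesale: every two-dimensional subalgebra of $\mathfrak{gl}_2$ is solvable, hence by Lie's theorem conjugate into the upper-triangular Borel, hence preserves a line through the origin and cannot act transitively on $\C{2}\setminus 0$. That one-line observation replaces the case check entirely, after which your proof is complete.
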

\begin{proof}
There are no other subgroups of $\GL{2,\C{}}$ which act transitively on $\C{2} \backslash 0$;
see Huckleberry and Livorno \cite{Huckleberry/Livorno:1981}.
\end{proof}

\begin{remark}
The eigenstructure on a linear Hopf surface $S_F$ is induced, as we have already proven,
by the $G_0/H_0$-structure, where $G_0$ is the group of linear transformations
of $\C{2}$ preserving an eigenspace of the linear map $F$. 
\end{remark}

\begin{lemma}
The radial structure on the generic Hopf surface is induced by the 
$\GL{2,\C{}}/H_0$-structure on $\C{2} \backslash 0$ and by no proper subgroup
of $\GL{2,\C{}}$.
\end{lemma}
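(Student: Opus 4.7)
The first assertion is a direct application of the preceding lemma, which shows that every $\OO{n}$-structure on a linear Hopf surface is induced by its $\GL{2,\C{}}/H_0$-structure, so I focus on the second assertion. My plan is to argue by contradiction, use the preceding proposition to reduce to the case $G' = \SL{2,\C{}}$, and then use the fact that $F$ lies in the Poincar\'e domain to derive the required contradiction from a one-line determinant estimate.

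Suppose toward contradiction that $G' \subsetneq \GL{2,\C{}}$ is a proper closed complex Lie subgroup and $H' \subset G'$ a closed subgroup such that the $G'/H'$-structure on the generic Hopf surface $S_F$ induces the radial $\OO{n}$-structure through the inclusion $\Phi' : G' \hookrightarrow \GL{2,\C{}}$. Then the composition $G'/H' \to \GL{2,\C{}}/H_0 \to \OO{n}$ is the avatar producing the radial structure, and therefore the $G'/H'$-structure also induces the $\GL{2,\C{}}/H_0$-structure on $S_F$. The preceding proposition then forces $\Phi'$ to be surjective onto $\GL{2,\C{}}$ or onto $\SL{2,\C{}}$; the former is excluded by the properness of $G'$, leaving $G' = \SL{2,\C{}}$.

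Now I extract the contradiction. The holonomy generator of the $\GL{2,\C{}}/H_0$-structure on $S_F$ is $F$ itself (the linear map defining $S_F$), so under the inclusion $\Phi'$ the holonomy generator of the $G'/H'$-structure is forced to equal $F$; in particular $F \in G' = \SL{2,\C{}}$, so $\det F = \lambda_1 \lambda_2 = 1$. But $F$ lies in the Poincar\'e domain, so $|\lambda_1|, |\lambda_2| < 1$, giving $|\det F| < 1$, contradicting $\det F = 1$.

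The main obstacle is not any individual step but the careful use of the preceding proposition: I must verify that the hypothesized $G'/H'$-structure really does induce the $\GL{2,\C{}}/H_0$-structure, by checking that its developing map and holonomy factor appropriately through $\GL{2,\C{}}/H_0$. Once this factoring is set up — essentially by unwinding the definitions of avatar and induction — the rest follows immediately from the cited Huckleberry--Livorno classification and the modulus estimate for contractions in the Poincar\'e domain.
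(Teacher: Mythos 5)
Your argument is essentially correct but reaches the conclusion by a genuinely different route from the paper. The paper first observes that for a generic (non-hyperresonant) diagonal $F$ the cyclic group generated by the holonomy generator $(F,0)$ is Zariski dense in the diagonal torus, so the image of $G'$ must contain that torus; it then uses the fact that the image of the radial developing map surjects onto $\Proj{1}$ to force the image of $G'$ to act transitively on $\Proj{1}$, hence to contain $\SL{2,\C{}}/\mu_n$ by Huckleberry--Livorno, and the torus together with $\SL{2,\C{}}$ generates all of $\GL{2,\C{}}$. You instead invoke the preceding proposition to get the image down to ``$\GL{2,\C{}}$ or (a group containing) $\SL{2,\C{}}$'' and then kill the second alternative with the modulus of the determinant of the holonomy generator, using that $F$ is a contraction. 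Your route is shorter, avoids the Zariski-closure lemma entirely, and would work verbatim on hyperresonant diagonal surfaces where the holonomy is \emph{not} dense in the torus; what it gives up is the paper's more quantitative information that the image of $G'$ actually contains the full diagonal subgroup, which is what the Zariski-density step buys.

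Three loose ends you should tighten. First, the proposition only tells you the image of $G'$ contains $\SL{2,\C{}}$, so besides $G'=\SL{2,\C{}}$ you must also rule out the intermediate proper subgroups $\SetSuchThat{g}{\left(\det g\right)^k=1}$; your determinant estimate handles these identically since every element of such a group has $\left|\det g\right|=1$, but say so. Second, the holonomy of an induced structure is only determined up to conjugation (and, at the level of $G$, up to $n$-th roots of unity on the matrix part), so the correct statement is that some conjugate of $F$ (times a root of unity) lies in $G'$; this still has determinant of modulus $\left|\lambda_1\lambda_2\right|<1$, so the contradiction survives. Third, the factoring of the avatar through $\GL{2,\C{}}/H_0$ is not quite automatic: the stabilizer in $\GL{2,\C{}}$ of the base point of the radial map $\C{2}\setminus 0 \to \OO{n}$ is a finite (index $n$) extension of $H_0$, so $H'$ need only land in that extension, and the induced linear structure need not be the tautological one. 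You can sidestep this entirely by running the determinant argument directly on the $\OO{n}$-holonomy generator $(F,0) \in G$ rather than descending to the $\GL{2,\C{}}/H_0$-structure.
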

\begin{proof}
For a generic Hopf surface, the radial and eigen structures
will have holonomy generator Zariski dense in the diagonal matrices.
That ensures that for any holomorphic reduction, say to a $G'/H'$-structure, 
$G'$ will have to map onto a subgroup of $G$ containing the diagonal matrices. 
Moreover, $G'$ will have an open orbit in $\OO{n}$, containing at least
the open orbit of the diagonal matrices, which is everything except the fibers
over $0$ and $\infty$ and the $0$-section. However, the radial structure has everything
but the $0$-section in its image, so we will need $G'$ to have as image a larger group than
just the diagonal subgroup. Indeed our group $G'$ will need to act
transitively on $\Proj{1}$, so must map onto $\PSL{2,\C{}}$
by the classification of homogeneous surfaces (see Huckleberry and
Livorno \cite{Huckleberry/Livorno:1981}). Any subgroup 
of $\GL{2,\C{}}/\text{$n$-th roots of 1}$ mapping onto $\PSL{2,\C{}}$
will have to contain $\SL{2,\C{}}/\text{$n$-th roots of 1}$ and therefore the image of $G'$ must 
contain all of $\GL{2,\C{}}/\text{$n$-th roots of 1}$. 
\end{proof}

\section{Conclusions}

We have found all of the $\OO{n}$-structures on all Hopf surfaces explicitly, by computed
their developing maps and holonomy groups explicitly. This makes it possible to determine which of these
structures are induced from other locally homogeneous geometric structures on Hopf
surfaces. The one surprising result of the classification is the appearance of the
hyperresonant $\OO{n}$-structures (on the hyperresonant Hopf surfaces). The
hyperresonant $\OO{n}$-structures have no apparent geometric or intuitive description.
They depend on the prescence of complicated meromorphic functions (rational functions in
the canonical affine structure), and so disappear on the Hopf surfaces with trivial function fields.

The relation of these results to Wall's results \cite{Wall:1985,Wall:1986} deserves some clarification.
The full picture, of all holomorphic locally homogeneous geometric structures on
compact complex surfaces, and which are induced from which, is still hidden. It seems
likely that this picture will soon become clear. 
The classification of holomorphic Cartan geometries on compact complex surfaces 
would then appear to be within reach. We have to keep in mind that the explicit
classification of holonomy morphisms and developing maps for holomorphic projective 
connections on complex algebraic curves is still unknown, and perhaps too
complicated to be classifiable (see \cite{Gallo/Kapovich/Marden:2000}). Therefore we would only hope to classify
holomorphic Cartan geometries on compact complex surfaces modulo the classification on curves.
It seems likely that holomorphic Cartan geometries can be classified on linear 
Hopf manifolds in all dimensions.

\nocite{*}
\bibliographystyle{amsplain}
\bibliography{hopf}

\def\cprime{$'$}
\providecommand{\bysame}{\leavevmode\hbox to3em{\hrulefill}\thinspace}
\providecommand{\MR}{\relax\ifhmode\unskip\space\fi MR }
\providecommand{\MRhref}[2]{%
  \href{http://www.ams.org/mathscinet-getitem?mr=#1}{#2}
}
\providecommand{\href}[2]{#2}
\begin{thebibliography}{10}

\bibitem{Arnold:1988}
V.~I. Arnol{\cprime}d, \emph{Geometrical methods in the theory of ordinary
  differential equations}, second ed., Grundlehren der Mathematischen
  Wissenschaften [Fundamental Principles of Mathematical Sciences], vol. 250,
  Springer-Verlag, New York, 1988, Translated from the Russian by Joseph
  Sz\"ucs [J\'ozsef M. Sz\H ucs]. \MR{MR947141 (89h:58049)}

\bibitem{Barth/Peters/VanDenVen:1984}
W.~Barth, C.~Peters, and A.~Van~de Ven, \emph{Compact complex surfaces},
  Ergebnisse der Mathematik und ihrer Grenzgebiete (3) [Results in Mathematics
  and Related Areas (3)], vol.~4, Springer-Verlag, Berlin, 1984. \MR{MR749574
  (86c:32026)}

\bibitem{Brunella:1997}
Marco Brunella, \emph{Feuilletages holomorphes sur les surfaces complexes
  compactes}, Ann. Sci. \'Ecole Norm. Sup. (4) \textbf{30} (1997), no.~5,
  569--594. \MR{MR1474805 (98i:32051)}

\bibitem{Doubrov:2008}
Boris Doubrov, \emph{Generalized {W}ilczynski invariants for non-linear
  ordinary differential equations}, Symmetries and overdetermined systems of
  partial differential equations, IMA Vol. Math. Appl., vol. 144, Springer, New
  York, 2008, pp.~25--40. \MR{MR2384704 (2008m:34087)}

\bibitem{Dunajski/Tod:2006}
Maciej Dunajski and Paul Tod, \emph{Paraconformal geometry of {$n$}th-order
  {ODE}s, and exotic holonomy in dimension four}, J. Geom. Phys. \textbf{56}
  (2006), no.~9, 1790--1809. \MR{MR2240424 (2007m:53055)}

\bibitem{Fels:1993}
Mark~E. Fels, \emph{Some applications of {C}artan's method of equivalence to
  the geometric study of ordinary and partial differential equations}, Ph.D.
  thesis, McGill University, Montreal, 1993, pp.~vii+104.

\bibitem{Fels:1995}
\bysame, \emph{The equivalence problem for systems of second-order ordinary
  differential equations}, Proc. London Math. Soc. (3) \textbf{71} (1995),
  no.~1, 221--240. \MR{MR1327940 (96d:58157)}

\bibitem{Gallo/Kapovich/Marden:2000}
Daniel Gallo, Michael Kapovich, and Albert Marden, \emph{The monodromy groups
  of {S}chwarzian equations on closed {R}iemann surfaces}, Ann. of Math.
  \textbf{151} (2000), no.~2, 625--704.

\bibitem{Godlinski/Nurowski:2007}
Micha{\l} Godli{\'n}ski and Pawe{\l} Nurowski, \emph{{$GL(2,\R{})$} geometry of
  {ODE}s}, arXiv.org:0710.0297, 2007.

\bibitem{Griffiths/Harris:1994}
Phillip Griffiths and Joseph Harris, \emph{Principles of algebraic geometry},
  Wiley Classics Library, John Wiley \& Sons Inc., New York, 1994, Reprint of
  the 1978 original. \MR{95d:14001}

\bibitem{Gunning:1978}
R.~C. Gunning, \emph{On uniformization of complex manifolds: the role of
  connections}, Mathematical Notes, vol.~22, Princeton University Press,
  Princeton, N.J., 1978. \MR{82e:32034}

\bibitem{Hormander:1990}
Lars H{\"o}rmander, \emph{An introduction to complex analysis in several
  variables}, third ed., North-Holland Mathematical Library, vol.~7,
  North-Holland Publishing Co., Amsterdam, 1990. \MR{MR1045639 (91a:32001)}

\bibitem{Howard/Millson/Snowden/Vakil:2009}
Ben Howard, John Millson, Andrew Snowden, and Ravi Vakil, \emph{The relations
  among invariants of points on the projective line}, arXiv:0906.2437, June
  2009.

\bibitem{Huckleberry/Livorno:1981}
A.~T. Huckleberry and E.~L. Livorni, \emph{A classification of homogeneous
  surfaces}, Canad. J. Math. \textbf{33} (1981), no.~5, 1097--1110.
  \MR{MR638369 (84h:32042)}

\bibitem{Ivashkovich:2008}
Sergei~M. Ivashkovich, \emph{Extra extension properties of equidimensional
  holomorphic mappings: results and open questions}, arXiv:0810.4588v2,
  November 2008.

\bibitem{Klingler:1998}
Bruno Klingler, \emph{Structures affines et projectives sur les surfaces
  complexes}, Ann. Inst. Fourier (Grenoble) \textbf{48} (1998), no.~2,
  441--477. \MR{MR1625606 (99c:32038)}

\bibitem{Kodaira:1964}
K.~Kodaira, \emph{On the structure of compact complex analytic surfaces. {I}},
  Amer. J. Math. \textbf{86} (1964), 751--798. \MR{MR0187255 (32 \#4708)}

\bibitem{Kodaira:1966}
\bysame, \emph{On the structure of compact complex analytic surfaces. {II}},
  Amer. J. Math. \textbf{88} (1966), 682--721. \MR{MR0205280 (34 \#5112)}

\bibitem{Kodaira:1968a}
\bysame, \emph{On the structure of compact complex analytic surfaces. {III}},
  Amer. J. Math. \textbf{90} (1968), 55--83. \MR{MR0228019 (37 \#3603)}

\bibitem{Kodaira:1968b}
\bysame, \emph{On the structure of complex analytic surfaces. {IV}}, Amer. J.
  Math. \textbf{90} (1968), 1048--1066. \MR{MR0239114 (39 \#473)}

\bibitem{Lagrange:1957a}
Ren{\'e} Lagrange, \emph{Sur le groupe de la famille des coniques du plan qui
  ont un \'el\'ement de contact donn\'e}, C. R. Acad. Sci. Paris \textbf{244}
  (1957), 1886--1868. \MR{MR0084146 (18,817a)}

\bibitem{Lagrange:1957b}
\bysame, \emph{Sur le groupe ponctuel conservant la famille des coniques du
  plan qui ont un \'el\'ement de contact donn\'e}, Ann. Sci. \'Ecole Norm. Sup.
  (3) \textbf{74} (1957), 197--229. \MR{MR0102042 (21 \#837)}

\bibitem{Mall:1991}
Daniel Mall, \emph{The cohomology of line bundles on {H}opf manifolds}, Osaka
  J. Math. \textbf{28} (1991), no.~4, 999--1015. \MR{MR1152964 (93d:32045)}

\bibitem{Mall:1998}
\bysame, \emph{On holomorphic and transversely holomorphic foliations on {H}opf
  surfaces}, J. Reine Angew. Math. \textbf{501} (1998), 41--69.

\bibitem{Namba:1974}
Makoto Namba, \emph{Automorphism groups of {H}opf surfaces}, T\^ohoku Math. J.
  (2) \textbf{26} (1974), 133--157. \MR{MR0338458 (49 \#3222)}

\bibitem{Remmert:1957}
Reinhold Remmert, \emph{Holomorphe und meromorphe {A}bbildungen komplexer
  {R}\"aume}, Math. Ann. \textbf{133} (1957), 328--370.

\bibitem{Shafarevich:1994}
I.~R. Shafarevich (ed.), \emph{Algebraic geometry. {IV}}, Encyclopaedia of
  Mathematical Sciences, vol.~55, Springer-Verlag, Berlin, 1994, Linear
  algebraic groups. Invariant theory, A translation of {{\it Algebraic
  geometry. 4}} (Russian), Akad. Nauk SSSR Vsesoyuz. Inst. Nauchn. i Tekhn.
  Inform., Moscow, 1989 [MR1100483 (91k:14001)], Translation edited by A. N.
  Parshin and I. R. Shafarevich. \MR{MR1309681 (95g:14002)}

\bibitem{Sharpe:1997}
R.~W. Sharpe, \emph{Differential geometry}, Graduate Texts in Mathematics, vol.
  166, Springer-Verlag, New York, 1997, Cartan's generalization of Klein's
  Erlangen program, With a foreword by S. S. Chern. \MR{MR1453120 (98m:53033)}

\bibitem{Taylor:1939}
A.~E. Taylor, \emph{A theorem concerning analytic continuation for functions of
  several complex variables}, Ann. of Math. (2) \textbf{40} (1939), 855--861.
  \MR{MR0000299 (1,50c)}

\bibitem{Thurston:1997}
William~P. Thurston, \emph{Three-dimensional geometry and topology. {V}ol. 1},
  Princeton Mathematical Series, vol.~35, Princeton University Press,
  Princeton, NJ, 1997, Edited by Silvio Levy. \MR{MR1435975 (97m:57016)}

\bibitem{Wall:1985}
C.~T.~C. Wall, \emph{Geometries and geometric structures in real dimension
  {$4$} and complex dimension {$2$}}, Geometry and topology ({C}ollege {P}ark,
  {M}d., 1983/84), Lecture Notes in Math., vol. 1167, Springer, Berlin, 1985,
  pp.~268--292. \MR{MR827276 (87e:57023)}

\bibitem{Wall:1986}
\bysame, \emph{Geometric structures on compact complex analytic surfaces},
  Topology \textbf{25} (1986), no.~2, 119--153. \MR{MR837617 (88d:32038)}

\bibitem{Wehler:1982}
Joachim Wehler, \emph{Versal deformation of {H}opf surfaces}, J. Reine Angew.
  Math. \textbf{328} (1981), 22--32. \MR{MR636192 (84h:32025)}

\end{thebibliography}

\end{document}